\providecommand{\U}[1]{\protect \rule{.1in}{.1in}}
\newtheorem{theorem}{Theorem}[section]
\newtheorem{definition}[theorem]{Definition}
\newtheorem{example}[theorem]{Example}
\newtheorem{lemma}[theorem]{Lemma}
\newtheorem{proposition}[theorem]{Proposition}
\newtheorem{remark}[theorem]{Remark}
\newenvironment{proof}[1][Proof]{\noindent \textbf{#1.} }{\  \rule{0.5em}{0.5em}}
\numberwithin{equation}{section}
\begin{document}

\title{On the rate of convergence for an $\alpha$-stable central limit theorem under
sublinear expectation}
\author{Mingshang Hu\thanks{Zhongtai Securities Institute for Financial Studies,
Shandong University, Jinan, Shandong 250100, PR China. humingshang@sdu.edu.cn.
Research supported by National Natural Science Foundation of China (No.
12326603, 11671231) and National Key R\&D Program of China (No.
2018YFA0703900).}
\and Lianzi Jiang\thanks{College of Mathematics and Systems Science, Shandong
University of Science and Technology, Qingdao, Shandong 266590, PR China.
jianglianzi95@163.com. Research supported by National Natural Science
Foundation of Shandong Province (No. ZR2023QA090)}
\and Gechun Liang\thanks{Department of Statistics, The University of Warwick,
Coventry CV4 7AL, U.K. g.liang@warwick.ac.uk. Research supported by National
Natural Science Foundation of China (No. 12171169) and Laboratory of
Mathematics for Nonlinear Science, Fudan University.}}
\date{}
\maketitle

\textbf{Abstract}.
In this paper, we propose a monotone approximation scheme
for a class of fully nonlinear degenerate partial integro-differential
equations (PIDEs) which characterize the nonlinear $\alpha$-stable L\'{e}vy
processes under sublinear expectation space with $\alpha \in(1,2)$. We further
establish the error bounds for the monotone approximation scheme. This in turn
yields an explicit Berry-Esseen bound and convergence rate for the $\alpha$-stable
central limit theorem under sublinear expectation.

%

\textbf{Key words}. Stable central limit theorem, Convergence rate, Sublinear
expectation, Monotone scheme method, Partial integro-differential equation

\textbf{MSC-classification}. 60F05, 60E07, 65M15.

\section{Introduction}

Motivated by measuring risks under model uncertainty, Peng
\cite{P2004,P2007,P20081,P2010} introduced the notion of sublinear expectation
space, called $G$-expectation space. The $G$-expectation theory has been
widely used to evaluate random outcomes, not using a single probability
measure, but using the supremum over a family of possibly mutually singular
probability measures. One of the fundamental results in this theory is the
celebrated Peng's robust central limit theorem introduced in
\cite{P20082,P2010}.
The corresponding convergence rate was an open problem until recently. The
first convergence rate was established by Song \cite{FPSS2019,Song2020} using
Stein's method and later by Krylov \cite{Krylov2020} using stochastic control
method under different model assumptions. More recently, Huang and Liang
\cite{HL2020} studied the convergence rate of a more general central limit
theorem via a monotone approximation scheme for the $G$-equation.

On the other hand, the nonlinear L\'{e}vy processes have been studied by Hu
and Peng \cite{HP2021} and Neufeld and Nutz \cite{NN2017}. For $\alpha
\in(1,2)$, they consider a nonlinear $\alpha$-stable L\'{e}vy process
$(X_{t})_{t\geq0}$ defined on a sublinear expectation space $(\Omega
,\mathcal{H},\mathbb{\hat{E})}$, whose local characteristics are described by
a set of L\'{e}vy triplets $\Theta=\{(0,0,F_{k_{\pm}}):k_{\pm}\in K_{\pm}\}$,
where $K_{\pm}\subset(\lambda_{1},\lambda_{2})$ for some $\lambda_{1}%
,\lambda_{2}\geq0$ and $F_{k_{\pm}}(dz)$ is the $\alpha$-stable L\'{e}vy
measure
\[
F_{k_{\pm}}(dz)=\frac{k_{-}}{|z|^{\alpha+1}}\mathbf{1}_{(-\infty
,0)}(z)dz+\frac{k_{+}}{|z|^{\alpha+1}}\mathbf{1}_{(0,+\infty)}(z)dz\text{.}%
\]
Such a nonlinear $\alpha$-stable L\'{e}vy process can be characterized via a
fully nonlinear partial integro-differential equation (PIDE). For any $\phi \in
C_{b,Lip}(\mathbb{R})$, Neufeld and Nutz \cite{NN2017} proved the following
representation result
\[
u(t,x):=\mathbb{\hat{E}}[\phi(x+X_{t})]\text{, \ }(t,x)\in \lbrack
0,T]\times \mathbb{R},
\]
where $u$ is the unique viscosity solution of the fully nonlinear PIDE%

\begin{equation}
\left \{
\begin{array}
[c]{ll}%
\displaystyle \partial_{t}u(t,x)-\sup \limits_{k_{\pm}\in K_{\pm}}\left \{
\int_{\mathbb{R}}\delta_{z}u(t,x)F_{k_{\pm}}(dz)\right \}  =0, & (t,x)\in
(0,T]\times \mathbb{R},\\
\displaystyle u(0,x)=\phi(x),\text{ \ }x\in \mathbb{R}, &
\end{array}
\right.  \label{1.1}%
\end{equation}
with $\delta_{z}u(t,x):=u(t,x+z)-u(t,x)-D_{x}u(t,x)z$. In contrast to the
fully nonlinear PIDEs studied in the PDE literature, (\ref{1.1}) is driven by
a family of $\alpha$-stable L\'evy measures rather than a single L\'evy
measure. Moreover, since $F_{k_{\pm}}(dz)$ possesses a singularity at the
origin, the integral term degenerates and \eqref{1.1} is a degenerate equation.

The corresponding generalized central limit theorem for $\alpha$-stable random
variables under sublinear expectation was established by Bayraktar and Munk
\cite{BM2016}. For this, let $( \xi_{i})_{i=1}^{\infty}$ be a sequence of
i.i.d. $\mathbb{R}$-valued random variables on a sublinear expectation space
$(\Omega,\mathcal{H},\mathbb{\tilde{E})}$. After proper normalization,
Bayraktar and Munk proved that
\[
\lim_{n\rightarrow \infty}\mathbb{\tilde{E}}\left[  \phi \left(  \sum_{i=1}%
^{n}\frac{\xi_{i}}{\sqrt[\alpha]{n}}\right)  \right]  =\mathbb{\mathbb{\hat
{E}}}[\phi(X_{1})]\text{,}%
\]
for any $\phi \in C_{b,Lip}(\mathbb{R})$. We refer to the above convergence
result as the $\alpha$-stable central limit theorem under sublinear expectation.

Noting that $\mathbb{\hat{E}}[\phi(X_{1})\mathbb{]}=u(1,0)$, where $u$ is the
viscosity solution of (\ref{1.1}), in this work, we study the rate of
convergence for the $\alpha$-stable central limit theorem under sublinear
expectation via the numerical analysis method for the nonlinear PIDE (\ref{1.1}). To
do this, we first construct a sublinear expectation space $(\mathbb{R},C_{Lip}(\mathbb{R}),\mathbb{\tilde{E}})$ and introduce a random variable
$\xi$ on this space. For given $T>0$ and $\Delta \in(0,1)$, using the random variable
$\xi$ under $\mathbb{\tilde{E}}$ as input, we define a discrete scheme
$u_{\Delta}:[0,T]\times \mathbb{R\rightarrow R}$ to approximate $u$ by
\begin{equation}\begin{array}
[c]{l}u_{\Delta}(t,x)=\phi(x),\text{ \ if }t\in \lbrack0,\Delta),\\
u_{\Delta}(t,x)=\mathbb{\tilde{E}}[u_{\Delta}(t-\Delta,x+\Delta^{1/\alpha}\xi)],\text{ \ if }t\in \lbrack \Delta,T].
\end{array}
\label{1.0}\end{equation}
Taking $T=1$ and $\Delta=\frac{1}{n}$, we can recursively apply the above
scheme to obtain\[
\mathbb{\tilde{E}}\left[  \phi \left(  \sum_{i=1}^{n}\frac{\xi_{i}}{\sqrt[\alpha]{n}}\right)  \right]=u_{\Delta}(1,0).
\]
In this way, the convergence rate of the $\alpha$-stable central limit theorem is
transformed into the convergence rate of the discrete scheme (\ref{1.0}) for approximating the nonlinear PIDE (\ref{1.1}).

The basic framework for convergence of numerical schemes to viscosity
solutions of HJB equations was established by Barles and Souganidis
\cite{BS1991}. They showed that any monotone, stable and consistent
approximation scheme converges to the correct solution, provided that there
exists a comparison principle for the limiting equation. The corresponding
convergence rate was first obtained\ by Krylov, who introduced the shaking
coefficients technique to construct a sequence of smooth
subsolutions/supersolutions in \cite{Krylov1997,Krylov1999,Krylov2000}. This
technique was further developed by Barles and Jakobsen to general monotone
approximation schemes (see \cite{BJ2002,BJ2005,BJ2007} and references therein).

The design and analysis of numerical schemes for nonlinear PIDEs is a
relatively new area of research. For nonlinear degenerate PIDEs driven by a
family of $\alpha$-stable L\'{e}vy measures, there are no general results
giving error bounds for numerical schemes. Most of existing results in the PDE
literature only deal with a single L\'{e}vy measure and its finite difference
method, e.g., \cite{BCJ2019,BJK2008,BJK2010,JKC2008}. One exception is
\cite{CRR2016} which considers a nonlinear PIDE driven by a set of tempered
$\alpha$-stable L\'{e}vy measures for $\alpha \in(0,1)$ by using the finite
difference method.

To derive the error bounds for the discrete scheme (\ref{1.0}), the key step
is to interchange the roles of the discrete scheme and the original equation
when the approximate solution has enough regularity. The classical regularity
estimates of the approximate solution depend on the finite variance of random
variables. Since $\xi$ has an infinite variance, the method developed in
\cite{Krylov2020} cannot be applied to $u_{\Delta}$. To overcome this
difficulty, by introducing a truncated discrete scheme $u_{\Delta,N}$ related
to a truncated random variable $\xi^{N}$ with finite variance, we construct a
new type of regularity estimates of $u_{\Delta,N}$, which plays a pivotal role
in establishing the space and time regularity properties for $u_{\Delta}$.
With the help of a precise estimate of the truncation $\mathbb{\tilde{E}}%
[|\xi-\xi^{N}|]$, a novel estimate for $|u_{\Delta}-u_{\Delta,N}|$ is
obtained. By choosing a proper $N$, we then establish the regularity estimates
for $u_{\Delta}$. Together with the concavity of (\ref{1.1}) and (\ref{1.0})
and the regularity estimates of their solutions, we are able to interchange
their roles, and thus derive the error bounds for the discrete scheme. To the
best of our knowledge, this is the first error bounds for the numerical
schemes of fully nonlinear PIDEs associated with a family of $\alpha$-stable
L\'{e}vy measures, which in turn provides a nontrivial convergence rate result
for the $\alpha$-stable central limit theorem under sublinear expectation.

On the other hand, the classical probability literature mainly deals with
$\Theta$ as a singleton, so $(X_{t})_{t\geq0}$ becomes a classical L\'{e}vy
process with triplet $\Theta$, and $X_{1}$ is an $\alpha$-stable random
variable. The corresponding convergence rate of the classical $\alpha$-stable
central limit theorem (with $\Theta$ as a singleton) has been studied in the
Kolmogorov distance (see, e.g.,
\cite{DN2002,H19811,H19812,HL2015,IL1971,JP1998}) and in the Wasserstein-1
distance or the smooth Wasserstein distance (see, e.g.,
\cite{AMPS2017,CGS2011,CX2019,JLL2020,NP2012,X2019}). The first type is proved
by the characteristic functions which do not exist in the sublinear framework,
while the second type relies on Stein's method which also fails under the
sublinear setting.

The rest of the paper is organized as follows. In Section 2, we review some
necessary results about sublinear expectation and $\alpha$-stable L\'{e}vy
processes. In Section 3, we list the assumptions and our main results, the
convergence rate of the $\alpha$-stable random variables under sublinear
expectation. We present two examples to illustrate our results in Section 4.
Finally, by using the monotone scheme method, the proof of our main result is
given in Section 5.

\section{Preliminaries}

In this section, we recall some basic results of sublinear expectation and
$\alpha$-stable L\'{e}vy processes, which are needed in the sequel. For more
details, we refer the reader to \cite{BM2016,NN2017,P2007,P2010} and the
references therein.

We start with some notation. Let $C_{Lip}(\mathbb{R}^{n})$ be the space of
Lipschitz functions on $\mathbb{R}^{n}$, and $C_{b,Lip}(\mathbb{R}^{n})$ be
the space of bounded Lipschitz functions on $\mathbb{R}^{n}$. For any subset
$Q\subset \lbrack0,T]\times \mathbb{R}$ and for any bounded function on $Q$, we
define the norm $|\omega|_{0}:=\sup_{(t,x)\in Q}|\omega(t,x)|.$ We also use
the following spaces: $C_{b}(Q)$ and $C_{b}^{\infty}(Q)$, denoting,
respectively, the space of bounded continuous functions on $Q$ and the space
of bounded continuous functions on $Q$ with bounded derivatives of any order.
For the rest of this paper, we take a nonnegative function $\zeta \in
C^{\infty} (\mathbb{R}^{2})$ with unit integral and support in
$\{(t,x):-1<t<0,|x|<1\}$ and for $\varepsilon \in(0,1)$ let $\zeta
_{\varepsilon}(t,x)=\varepsilon^{-3}\zeta(t/\varepsilon^{2},x/\varepsilon)$.

\subsection{Sublinear expectation}

Let $\mathcal{H}$ be a linear space of real valued functions defined on a set
$\Omega$ such that if $X_{1},\ldots,X_{n}$ $\in \mathcal{H}$, then
$\varphi(X_{1},\ldots,X_{n})\in \mathcal{H}$ for each $\varphi \in
C_{Lip}(\mathbb{R}^{n})$.

\begin{definition}
A functional $\mathbb{\hat{E}}$: $\mathcal{H}\rightarrow \mathbb{R}$ is called
a sublinear expectation: if for all $X,Y \in \mathcal{H}$, it satisfies the
following properties:

\begin{description}
\item[(i)] Monotonicity: If $X\geq Y$ then $\mathbb{\hat{E}}\left[  X\right]
\geq \mathbb{\hat{E}}\left[  Y\right]  ;$

\item[(ii)] Constant preservation: $\mathbb{\hat{E}}\left[  c\right]  =c$ for
any $c\in \mathbb{R};$

\item[(iii)] Sub-additivity: $\mathbb{\hat{E}}\left[  X+Y\right]
\leq \mathbb{\hat{E}}\left[  X\right]  +\mathbb{\hat{E}}\left[  Y\right]  ;$

\item[(iv)] Positive homogeneity: $\mathbb{\hat{E}}\left[  \lambda X\right]
=\lambda \mathbb{\hat{E}}\left[  X\right]  $ for each $\lambda>0.$
\end{description}
\end{definition}

The triplet $(\Omega,\mathcal{H},\mathbb{\hat{E})}$ is called a sublinear
expectation space. From the definition of the sublinear expectation
$\mathbb{\hat{E}}$, the following results can be easily obtained.

\begin{proposition}
\label{Prop^E Plimi}For $X,Y$ $\in \mathcal{H}$, we have

\begin{description}
\item[(i)] If $\mathbb{\hat{E}}\left[  X\right]  =-\mathbb{\hat{E}}\left[
-X\right]  $, then $\mathbb{\hat{E}}\left[  X+Y\right]  =\mathbb{\hat{E}%
}\left[  X\right]  +\mathbb{\hat{E}}\left[  Y\right]  ;$

\item[(ii)] $|\mathbb{\hat{E}}\left[  X\right]  -\mathbb{\hat{E}}\left[
Y\right]  |\leq \mathbb{\hat{E}}\left[  \left \vert X-Y\right \vert \right]  ;$

\item[(iii)] $\mathbb{\hat{E}}\left[  \left \vert XY\right \vert \right]
\leq(\mathbb{\hat{E}}\left[  \left \vert X\right \vert ^{p}\right]  )^{1/p}%
\cdot(\mathbb{\hat{E}}\left[  \left \vert Y\right \vert ^{q}\right]  )^{1/q},$
for $1\leq p,q<\infty$ with $\frac{1}{p}+\frac{1}{q}=1.$
\end{description}
\end{proposition}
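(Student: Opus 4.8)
The plan is to obtain all three assertions as direct consequences of the four defining axioms (i)--(iv) of a sublinear expectation, keeping in mind that positive homogeneity only yields $\mathbb{\hat{E}}[\lambda X]=\lambda\mathbb{\hat{E}}[X]$ for $\lambda>0$, so that $\mathbb{\hat{E}}[-X]=-\mathbb{\hat{E}}[X]$ is a hypothesis to be used in part (i), not a general identity.

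For part (i), subadditivity immediately gives $\mathbb{\hat{E}}[X+Y]\le\mathbb{\hat{E}}[X]+\mathbb{\hat{E}}[Y]$. For the reverse inequality I would write $Y=(X+Y)+(-X)$ and apply subadditivity once more to obtain $\mathbb{\hat{E}}[Y]\le\mathbb{\hat{E}}[X+Y]+\mathbb{\hat{E}}[-X]$; invoking the hypothesis $\mathbb{\hat{E}}[-X]=-\mathbb{\hat{E}}[X]$ and rearranging then yields $\mathbb{\hat{E}}[X]+\mathbb{\hat{E}}[Y]\le\mathbb{\hat{E}}[X+Y]$, which combined with the first inequality gives the claimed equality.

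For part (ii), I would start from the pointwise bound $X\le Y+|X-Y|$; monotonicity followed by subadditivity gives $\mathbb{\hat{E}}[X]\le\mathbb{\hat{E}}[Y]+\mathbb{\hat{E}}[|X-Y|]$, hence $\mathbb{\hat{E}}[X]-\mathbb{\hat{E}}[Y]\le\mathbb{\hat{E}}[|X-Y|]$. Interchanging the roles of $X$ and $Y$ gives $\mathbb{\hat{E}}[Y]-\mathbb{\hat{E}}[X]\le\mathbb{\hat{E}}[|X-Y|]$, and the two estimates together produce the asserted bound on the absolute difference.

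For part (iii), the natural route is Young's inequality: for $p,q\in(1,\infty)$ with $\tfrac1p+\tfrac1q=1$ and any $\lambda>0$ one has the pointwise bound $|XY|\le\frac{\lambda^{p}}{p}|X|^{p}+\frac{1}{q\lambda^{q}}|Y|^{q}$. Applying monotonicity, subadditivity and positive homogeneity gives $\mathbb{\hat{E}}[|XY|]\le\frac{\lambda^{p}}{p}\mathbb{\hat{E}}[|X|^{p}]+\frac{1}{q\lambda^{q}}\mathbb{\hat{E}}[|Y|^{q}]$ for every $\lambda>0$. Optimizing the right-hand side over $\lambda$ (equivalently, normalizing $X$ and $Y$ by $(\mathbb{\hat{E}}[|X|^{p}])^{1/p}$ and $(\mathbb{\hat{E}}[|Y|^{q}])^{1/q}$ when these are nonzero) and using $\tfrac1p+\tfrac1q=1$ produces exactly $\mathbb{\hat{E}}[|XY|]\le(\mathbb{\hat{E}}[|X|^{p}])^{1/p}(\mathbb{\hat{E}}[|Y|^{q}])^{1/q}$. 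The only point requiring care---and the one I would flag as the main, though still minor, obstacle---is the degenerate case $\mathbb{\hat{E}}[|X|^{p}]=0$ or $\mathbb{\hat{E}}[|Y|^{q}]=0$, where the normalization breaks down; here the $\lambda$-parametrized bound handles matters cleanly, since letting $\lambda\to\infty$ (respectively $\lambda\to0$) forces $\mathbb{\hat{E}}[|XY|]\le0$, and then monotonicity together with constant preservation gives $\mathbb{\hat{E}}[|XY|]=0$, so the inequality holds trivially.
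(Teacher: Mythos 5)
Your proof is correct, and since the paper states this proposition without proof (``From the definition of the sublinear expectation $\mathbb{\hat{E}}$, the following results can be easily obtained,'' deferring to the standard references \cite{P20081,P2010}), there is no paper argument to diverge from: your derivations of (i) and (ii) are exactly the standard ones, and your treatment of (iii) via the $\lambda$-parametrized Young inequality $|XY|\leq\frac{\lambda^{p}}{p}|X|^{p}+\frac{1}{q\lambda^{q}}|Y|^{q}$ is the usual normalization proof in a form that, as you rightly flag, also settles the degenerate case $\mathbb{\hat{E}}[|X|^{p}]=0$ or $\mathbb{\hat{E}}[|Y|^{q}]=0$ by letting $\lambda\to\infty$ or $\lambda\to0$, with no separate approximation needed.
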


\begin{definition}
Let $X_{1}$ and $X_{2}$ be two $n$-dimensional random vectors defined
respectively in sublinear expectation spaces $(\Omega_{1},\mathcal{H}%
_{1},\mathbb{\hat{E}}_{1})$ and $(\Omega_{2},\mathcal{H}_{2},\mathbb{\hat{E}%
}_{2})$. They are called identically distributed, denoted by $X_{1}\overset
{d}{=}X_{2}$, if $\mathbb{\hat{E}}_{1}\left[  \varphi(X_{1})\right]
=\mathbb{\hat{E}}_{2}\left[  \varphi(X_{2})\right]  $, for all $\varphi \in
C_{Lip}(\mathbb{R}^{n})$.
\end{definition}

\begin{definition}
In a sublinear expectation space $(\Omega,\mathcal{H},\mathbb{\hat{E})}$, a
random vector $Y=(Y_{1},\ldots,Y_{n})$ $\in \mathcal{H}^{n}$, is said to be
independent from another random vector $X=(X_{1},\ldots,X_{m})\in
\mathcal{H}^{m}$ under $\mathbb{\hat{E}}[\cdot]$, denoted by $Y\perp X$, if
for every test function $\varphi \in C_{Lip}(\mathbb{R}^{m}\times \mathbb{R}%
^{n})$ we have
\[
\mathbb{\hat{E}}\left[  \varphi(X,Y)\right]  =\mathbb{\hat{E}}\left[
\mathbb{\hat{E}}\left[  \varphi(x,Y)\right]  _{x=X}\right]  .
\]
$\bar{X}=(\bar{X}_{1},\ldots,\bar{X}_{m})\in \mathcal{H}^{m}$ is said to be an
independent copy of $X$ if $\bar{X}\overset{d}{=}X$ and $\bar{X}\perp X$.
\end{definition}

More details concerning general sublinear expectation spaces can be referred
to \cite{P20081,P2010} and references therein.

\subsection{$\alpha$-stable L\'{e}vy process}

\begin{definition}
Let $\alpha \in(0,2]$. A random variable $X$ on a sublinear expectation space
$(\Omega,\mathcal{H},\mathbb{\hat{E})}$ is said to be (strictly) $\alpha
$-stable if for all $a,b\geq0$,
\[
aX+bY\overset{d}{=}(a^{\alpha}+b^{\alpha})^{1/\alpha}X,
\]
where $Y$ is an independent copy of $X$.
\end{definition}

\begin{remark}
For $\alpha=1$, $X$ is the maximal random variables discussed in
\cite{HL2014,P20082,P2010}. When $\alpha=2$, $X$ becomes the $G$-normal random
variables introduced by Peng \cite{P2019,P2010}. In this paper, we shall focus
on the case of $\alpha \in(1,2)$ and consider $X$ for a nonlinear $\alpha
$-stable L\'{e}vy process $(X_{t})_{t\geq0}$ in the framework of \cite{NN2017}.
\end{remark}

Let $\alpha \in(1,2)$, $K_{\pm}$ be a bounded measurable subset of
$\mathbb{R}_{+}$, and $F_{k_{\pm}}$ be the $\alpha$-stable L\'{e}vy measure
\[
F_{k_{\pm}}(dz)=\frac{k_{-}}{|z|^{\alpha+1}}\mathbf{1}_{(-\infty
,0)}(z)dz+\frac{k_{+}}{|z|^{\alpha+1}}\mathbf{1}_{(0,+\infty)}(z)dz,
\]
for all $k_{-},k_{+}\in K_{\pm}$, and denote by $\Theta:=\{(0,0,F_{k_{\pm}%
}):k_{\pm}\in K_{\pm}\}$ the set of L\'{e}vy triplets.
From \cite[Theorem 2.1]{NN2017}, we can define a nonlinear $\alpha$-stable L\'{e}vy process
$(X_{t})_{t\geq0}$ with respect to a sublinear expectation
\[
\mathbb{\hat{E}}[\cdot]=\sup \limits_{P\in \mathfrak{B}_{\Theta}}E^{P}[\cdot],
\]
where $E^{P}$ is the usual expectation under the probability measure $P$,
and\ $\mathfrak{B}_{\Theta}\ $is a set of all semimartingales with $\Theta
$-valued differential characteristics.
This means the following:

\begin{description}
\item[(i)] $(X_{t})_{t\geq0}$ is real-valued c\`{a}dl\`{a}g process and
$X_{0}=0$;

\item[(ii)] $(X_{t})_{t\geq0}$ has stationary increments, that is,
$X_{t}-X_{s}$ and $X_{t-s}$ are identically distributed for all $0\leq s\leq
t;$

\item[(iii)] $(X_{t})_{t\geq0}$ has independent increments, that is,
$X_{t}-X_{s}$ is independent from $(X_{s_{1}},\ldots,X_{s_{n}})$ for each
$n\in \mathbb{N}$ and $0\leq s_{1}\leq \cdots \leq s_{n}\leq s$.
\end{description}

In the following, we present some basic lemmas of the $\alpha$-stable L\'{e}vy
process $(X_{t})_{t\geq0}$. We refer to \cite[Lemmas 2.6-2.9]{BM2016} and \cite[Lemmas 5.1-5.3]{NN2017} for the details of the proof.

\begin{lemma}
We have that
\[
\mathbb{\hat{E}}[|X_{1}|]<\infty.
\]

\end{lemma}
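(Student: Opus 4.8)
The plan is to represent the nonlinear expectation as a supremum over a family of classical semimartingale laws, reduce the claim to a uniform first-moment bound under each law, and then exploit the L\'{e}vy--It\^{o} decomposition of $X_{1}$ into its large and small jumps. By the construction of the nonlinear $\alpha$-stable L\'{e}vy process in Neufeld--Nutz \cite{NN2017}, there is a set $\mathcal{P}$ of probability measures under each of which $(X_{t})_{t\geq0}$ is a semimartingale with zero diffusion and jump compensator $F_{k_{\pm}(s)}(dz)\,ds$ for some measurable $K_{\pm}$-valued control $s\mapsto k_{\pm}(s)$, and
\[
\mathbb{\hat{E}}[\varphi(X_{1})]=\sup_{P\in\mathcal{P}}E_{P}[\varphi(X_{1})].
\]
Since $x\mapsto|x|$ is Lipschitz but unbounded, I would first work with the truncations $\phi_{R}(x):=|x|\wedge R\in C_{b,Lip}(\mathbb{R})$, establish a bound on $\sup_{P}E_{P}[|X_{1}|]$ that is uniform in $P$ (hence in $R$), and then let $R\to\infty$ using the monotone convergence $\phi_{R}\uparrow|\cdot|$ together with continuity from below of $\mathbb{\hat{E}}$ to recover $\mathbb{\hat{E}}[|X_{1}|]$.

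Under a fixed $P\in\mathcal{P}$ I would split $X_{1}$ according to its jumps of size larger and smaller than $1$, writing $N$ for the jump measure and $\tilde{N}$ for its compensated version. The large-jump part and the compensating drift are controlled in $L^{1}$: since $k_{\pm}(s)\leq\lambda_{2}$,
\[
E_{P}\Big[\int_{0}^{1}\!\!\int_{|z|>1}|z|\,N(ds,dz)\Big]=\int_{0}^{1}\!\!\int_{|z|>1}|z|\,F_{k_{\pm}(s)}(dz)\,ds\leq2\lambda_{2}\int_{1}^{\infty}z^{-\alpha}\,dz=\frac{2\lambda_{2}}{\alpha-1},
\]
which is finite precisely because $\alpha>1$, and the same bound controls the drift $\int_{0}^{1}\!\int_{|z|>1}z\,F_{k_{\pm}(s)}(dz)\,ds$. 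The small-jump part $M_{1}:=\int_{0}^{1}\!\int_{|z|\leq1}z\,\tilde{N}(ds,dz)$ is a purely discontinuous martingale, and I would estimate it in $L^{2}$ via the isometry and Jensen's inequality:
\[
E_{P}[|M_{1}|]\leq\big(E_{P}[M_{1}^{2}]\big)^{1/2}=\Big(\int_{0}^{1}\!\!\int_{|z|\leq1}z^{2}\,F_{k_{\pm}(s)}(dz)\,ds\Big)^{1/2}\leq\Big(\frac{2\lambda_{2}}{2-\alpha}\Big)^{1/2},
\]
finite precisely because $\alpha<2$. Adding the three contributions gives $E_{P}[|X_{1}|]\leq C(\alpha,\lambda_{2})$ with a constant independent of $P$, and taking the supremum yields $\mathbb{\hat{E}}[|X_{1}|]\leq C<\infty$.

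I expect the treatment of the small jumps to be the main obstacle. Because $\int_{|z|<1}|z|\,F_{k_{\pm}}(dz)=\infty$ for every $\alpha\geq1$, the process has infinite jump activity near the origin and the naive triangle inequality $|X_{1}|\leq\int|z|\,N(ds,dz)$ is useless; one is forced to use the compensated martingale structure and an $L^{2}$ estimate, which is exactly where the upper constraint $\alpha<2$ enters, while the lower constraint $\alpha>1$ is what makes the large jumps integrable and legitimizes the first-order compensation in $\delta_{z}u$ in (\ref{1.1}). An alternative, purely analytic route avoids the semimartingale representation: one checks by direct computation that $w(t,x):=\sqrt{1+x^{2}}+C_{0}t$ is a classical (hence viscosity) supersolution of (\ref{1.1}) for a suitable $C_{0}=C_{0}(\alpha,\lambda_{2})$, using $|\delta_{z}w|\leq2|z|\wedge(Cz^{2})$ together with $1<\alpha<2$ to bound $\sup_{k_{\pm}\in K_{\pm}}\int_{\mathbb{R}}\delta_{z}w\,F_{k_{\pm}}(dz)$ uniformly in $x$, and then invokes the comparison principle for (\ref{1.1}) against $u(t,x)=\mathbb{\hat{E}}[\phi_{R}(x+X_{t})]$ to conclude $\mathbb{\hat{E}}[\phi_{R}(X_{1})]=u(1,0)\leq w(1,0)=1+C_{0}$ before letting $R\to\infty$.
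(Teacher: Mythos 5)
The paper itself gives no proof of this lemma, quoting it directly from Bayraktar--Munk \cite{BM2016}, and your first argument is essentially the proof given there: represent $\mathbb{\hat{E}}$ as a supremum over the Neufeld--Nutz semimartingale laws, control the large jumps and their compensator in $L^{1}$ using $\alpha>1$, and control the compensated small-jump martingale via the $L^{2}$ isometry using $\alpha<2$, all uniformly in the control since $K_{\pm}\subset(\lambda_{1},\lambda_{2})$ is bounded. One small repair: your appeal to ``continuity from below of $\mathbb{\hat{E}}$'' is not a valid axiom for general sublinear expectations, but it is also unnecessary here --- since $\mathbb{\hat{E}}[\cdot]=\sup_{P\in\mathcal{P}}E_{P}[\cdot]$ by construction, classical monotone convergence under each fixed $P$ gives $E_{P}[|X_{1}|]=\lim_{R}E_{P}[\phi_{R}(X_{1})]\leq C(\alpha,\lambda_{2})$ and the supremum over $P$ finishes the proof without any truncation-limit interchange at the level of $\mathbb{\hat{E}}$. (Your alternative supersolution route would additionally require a comparison principle valid for the unbounded barrier $\sqrt{1+x^{2}}+C_{0}t$, which goes beyond the bounded-solution comparison of \cite{NN2017}, so the semimartingale argument is the safer one.)
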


\begin{lemma}
For all $\lambda>0$ and $t\geq0$, $X_{\lambda t}$ and $\lambda^{1/\alpha}%
X_{t}$ are identically distributed.
\end{lemma}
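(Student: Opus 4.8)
The plan is to read off the law of $X_t$ from the PIDE (\ref{1.1}) and to exploit the exact homogeneity of the $\alpha$-stable L\'{e}vy measures $F_{k_{\pm}}$ under spatial dilation. By the definition of identical distribution it suffices to prove $\mathbb{\hat{E}}[\varphi(X_{\lambda t})]=\mathbb{\hat{E}}[\varphi(\lambda^{1/\alpha}X_{t})]$ for every $\varphi\in C_{Lip}(\mathbb{R})$; I would first establish this for $\phi\in C_{b,Lip}(\mathbb{R})$, where the characterization $u(t,x)=\mathbb{\hat{E}}[\phi(x+X_{t})]$ as the unique viscosity solution of (\ref{1.1}) is available, and afterwards pass to $C_{Lip}(\mathbb{R})$ by truncation, using $\mathbb{\hat{E}}[|X_{t}|]<\infty$ (which follows from the preceding lemma together with stationarity) to dominate the tails. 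So fix $\lambda>0$ and $\phi\in C_{b,Lip}(\mathbb{R})$, write $\tilde{\phi}(x):=\phi(\lambda^{1/\alpha}x)$, and introduce the rescaled function $\tilde{u}(t,x):=u(\lambda t,\lambda^{1/\alpha}x)$. The goal is to show that $\tilde{u}$ is the unique viscosity solution of (\ref{1.1}) with initial datum $\tilde{\phi}$.

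The heuristic computation driving everything is as follows. One has $\tilde{u}(0,x)=\phi(\lambda^{1/\alpha}x)=\tilde{\phi}(x)$ and $\partial_{t}\tilde{u}(t,x)=\lambda\,\partial_{t}u(\lambda t,\lambda^{1/\alpha}x)$, while a direct substitution gives $\delta_{z}\tilde{u}(t,x)=(\delta_{z'}u)(\lambda t,\lambda^{1/\alpha}x)$ with $z'=\lambda^{1/\alpha}z$. The crucial step is the homogeneity of the L\'{e}vy measure: under $z\mapsto\lambda^{1/\alpha}z$ the density $|z|^{-\alpha-1}\,dz$ transforms into $\lambda\,|z'|^{-\alpha-1}\,dz'$, since $\lambda^{(\alpha+1)/\alpha}\cdot\lambda^{-1/\alpha}=\lambda$, whence $\int_{\mathbb{R}}\delta_{z}\tilde{u}(t,x)F_{k_{\pm}}(dz)=\lambda\int_{\mathbb{R}}(\delta_{z'}u)(\lambda t,\lambda^{1/\alpha}x)F_{k_{\pm}}(dz')$ for each $k_{\pm}$. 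Taking the supremum over $k_{\pm}\in K_{\pm}$ and combining with the time derivative shows that the left-hand side of (\ref{1.1}) applied to $\tilde{u}$ at $(t,x)$ equals $\lambda$ times the left-hand side applied to $u$ at $(\lambda t,\lambda^{1/\alpha}x)$, hence vanishes.

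The main obstacle is making this rigorous, because $u$ is only a viscosity (not a classical) solution, so the manipulation must be recast at the level of test functions and the singular nonlocal term handled through its usual near/far splitting. If a smooth $\varphi$ touches $\tilde{u}$ from above at $(t_{0},x_{0})$, then $\psi(s,y):=\varphi(s/\lambda,\lambda^{-1/\alpha}y)$ touches $u$ from above at $(\lambda t_{0},\lambda^{1/\alpha}x_{0})$, and the same dilation identity transfers the nonlocal subsolution inequality for $u$ into the corresponding inequality for $\tilde{u}$, the supersolution case being symmetric; some care is needed to check that the splitting of the $\alpha$-stable kernel and the estimate of its far part are preserved under the scaling. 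Granting this, both $\tilde{u}$ and $u_{\tilde{\phi}}(t,x):=\mathbb{\hat{E}}[\tilde{\phi}(x+X_{t})]$ solve (\ref{1.1}) with datum $\tilde{\phi}$, so the comparison principle of Neufeld--Nutz forces $\tilde{u}=u_{\tilde{\phi}}$. Evaluating at $x=0$ then yields $\mathbb{\hat{E}}[\phi(X_{\lambda t})]=u(\lambda t,0)=\tilde{u}(t,0)=u_{\tilde{\phi}}(t,0)=\mathbb{\hat{E}}[\tilde{\phi}(X_{t})]=\mathbb{\hat{E}}[\phi(\lambda^{1/\alpha}X_{t})]$, and the extension from $C_{b,Lip}(\mathbb{R})$ to $C_{Lip}(\mathbb{R})$ completes the argument.
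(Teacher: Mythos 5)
First, a structural remark: the paper itself offers no proof of this lemma --- it is imported verbatim from Bayraktar--Munk \cite{BM2016} --- so there is no in-paper argument to compare against, only the source's. Your core computation is correct and is in substance the standard one: $\delta_{z}\tilde{u}(t,x)=(\delta_{z'}u)(\lambda t,\lambda^{1/\alpha}x)$ with $z'=\lambda^{1/\alpha}z$, the homogeneity $F_{k_{\pm}}(\lambda^{-1/\alpha}dz')=\lambda\,F_{k_{\pm}}(dz')$ (your exponent bookkeeping $\lambda^{(\alpha+1)/\alpha}\cdot\lambda^{-1/\alpha}=\lambda$ is right), and the matching factor $\partial_{t}\tilde{u}=\lambda\,\partial_{t}u$, so that $\tilde{u}(t,x)=u(\lambda t,\lambda^{1/\alpha}x)$ solves (\ref{1.1}) with datum $\tilde{\phi}$; the crucial point that the supremum over $k_{\pm}\in K_{\pm}$ commutes with the scaling holds because the \emph{same} factor $\lambda$ comes out for every triplet, i.e.\ the uncertainty set $\Theta$ is invariant. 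The transfer through test functions $\varphi\mapsto\psi(s,y)=\varphi(s/\lambda,\lambda^{-1/\alpha}y)$ is routine, and uniqueness for (\ref{1.1}) (Neufeld--Nutz \cite{NN2017}) then gives $\mathbb{\hat{E}}[\phi(X_{\lambda t})]=\mathbb{\hat{E}}[\phi(\lambda^{1/\alpha}X_{t})]$ for all $\phi\in C_{b,Lip}(\mathbb{R})$.

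The genuine gap is your final extension to $C_{Lip}(\mathbb{R})$, which is required since identical distribution in this framework is tested against all of $C_{Lip}$. You claim the truncation is dominated "using $\mathbb{\hat{E}}[|X_{t}|]<\infty$", but under a sublinear expectation a finite first moment does \emph{not} control the truncation error: $\mathbb{\hat{E}}[(|X_{t}|-M)^{+}]$ is a supremum over a family of measures and need not tend to $0$ as $M\to\infty$ (take laws $P_{n}$ putting mass $1/n$ at $n$: $\sup_{n}E_{P_{n}}[|X|]\leq1$ yet $\sup_{n}E_{P_{n}}[(|X|-M)^{+}]=1$ for every $M$). What saves you here is uniform integrability, which is available because $K_{\pm}\subset(\lambda_{1},\lambda_{2})$ is bounded and $\alpha>1$: for any $p\in(1,\alpha)$ one has $\sup_{k_{\pm}}\int_{|z|>1}|z|^{p}F_{k_{\pm}}(dz)<\infty$, hence $\mathbb{\hat{E}}[|X_{t}|^{p}]<\infty$ and $\mathbb{\hat{E}}[(|X_{t}|-M)^{+}]\leq M^{1-p}\,\mathbb{\hat{E}}[|X_{t}|^{p}]\to0$; this must be added for the truncation step to close. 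Relatedly, your parenthetical that $\mathbb{\hat{E}}[|X_{t}|]<\infty$ "follows from the preceding lemma together with stationarity" flirts with circularity, since reducing $X_{t}$ to $X_{1}$ is exactly the scaling being proved; derive it instead directly from $\mathcal{K}<\infty$ as in \cite{BM2016}. Note finally that the characteristics-based route --- checking that $(X_{\lambda t})_{t\geq0}$ and $(\lambda^{1/\alpha}X_{t})_{t\geq0}$ are nonlinear L\'{e}vy processes with the \emph{same} triplet set (the identical homogeneity computation, performed once on $\Theta$) and invoking the Neufeld--Nutz uniqueness --- yields the equality of distributions on the whole test class at once and avoids the truncation issue entirely; this is the cleaner version of the argument in \cite{BM2016,NN2017}.
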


\begin{lemma}
\label{DPP}Suppose that $\phi \in C_{b,Lip}(\mathbb{R})$. Then,$\ $for any
$(t,x)\in \lbrack0,T]\times \mathbb{R}$,
\[
u(t,x)=\mathbb{\hat{E}}[\phi(x+X_{t})],
\]
is the unique viscosity solution of the fully nonlinear PIDE%
\begin{equation}
\left \{
\begin{array}
[c]{ll}%
\displaystyle \partial_{t}u(t,x)-\sup \limits_{k_{\pm}\in K_{\pm}}\left \{
\int_{\mathbb{R}}\delta_{z}u(t,x)F_{k_{\pm}}(dz)\right \}  =0, & (t,x)\in
(0,T]\times \mathbb{R},\\
\displaystyle u(0,x)=\phi(x),\text{ \ }x\in \mathbb{R}, &
\end{array}
\right.  \label{u_PIDE}%
\end{equation}
with $\delta_{z}u(t,x):=u(t,x+z)-u(t,x)-D_{x}u(t,x)z$. Moreover, it holds that
for any $0\leq s\leq t\leq T$,
\[
u(t,x)=\mathbb{\hat{E}}[u(t-s,x+X_{s})].
\]

\end{lemma}

\begin{lemma}
\label{u_regularity}Suppose that $\phi \in C_{b,Lip} (\mathbb{R})$. Then the
function $u$ is uniformly bounded by $|\phi|_{0}$ and jointly continuous. More
precisely, for any $t,s\in \lbrack0,T]$ and $x,y\in \mathbb{R}$,
\[
\left \vert u(t,x)-u(s,y)\right \vert \leq C_{\phi,\mathcal{K}}
(|x-y|+|t-s|^{1/\alpha}),
\]
where $C_{\phi,\mathcal{K}}$ is a constant depending only on Lipschitz
constant of $\phi$ and
\[
\mathcal{K}:=\sup \limits_{k_{\pm}\in K_{\pm}}\left \{  \int_{\mathbb{R}
}|z|\wedge|z|^{2}F_{k_{\pm}}(dz)\right \}  <\infty.
\]

\end{lemma}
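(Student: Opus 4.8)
The plan is to establish the three claims—uniform boundedness, spatial Lipschitz continuity, and temporal H\"{o}lder-$1/\alpha$ continuity—separately, working entirely from the representation $u(t,x)=\mathbb{\hat{E}}[\phi(x+X_{t})]$ (so that the dynamic programming identity of Lemma \ref{DPP} is not even needed), and then to combine the last two by the triangle inequality. Write $L_{\phi}$ for the Lipschitz constant of $\phi$. Boundedness is immediate: since $-||\phi||_{\infty}\leq \phi(x+X_{t})\leq ||\phi||_{\infty}$, monotonicity and constant preservation of $\mathbb{\hat{E}}$ give $|u|_{0}\leq ||\phi||_{\infty}$. For the spatial estimate I would apply Proposition \ref{Prop^E Plimi}(ii): for fixed $t$,
\[
|u(t,x)-u(t,y)|\leq \mathbb{\hat{E}}\big[|\phi(x+X_{t})-\phi(y+X_{t})|\big]\leq L_{\phi}|x-y|,
\]
so $u(t,\cdot)$ is Lipschitz with constant $L_{\phi}$, uniformly in $t$.

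The temporal estimate is the crux. Assume $s\leq t$. Again by Proposition \ref{Prop^E Plimi}(ii) and the Lipschitz property of $\phi$,
\[
|u(t,x)-u(s,x)|\leq \mathbb{\hat{E}}\big[|\phi(x+X_{t})-\phi(x+X_{s})|\big]\leq L_{\phi}\,\mathbb{\hat{E}}[|X_{t}-X_{s}|].
\]
Now I would invoke the stationary-increment property, which gives $X_{t}-X_{s}\overset{d}{=}X_{t-s}$ and hence $\mathbb{\hat{E}}[|X_{t}-X_{s}|]=\mathbb{\hat{E}}[|X_{t-s}|]$ (legitimate since $|\cdot|\in C_{Lip}(\mathbb{R})$), followed by the scaling property $X_{t-s}\overset{d}{=}(t-s)^{1/\alpha}X_{1}$, which yields $\mathbb{\hat{E}}[|X_{t-s}|]=(t-s)^{1/\alpha}\mathbb{\hat{E}}[|X_{1}|]$. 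Thus $|u(t,x)-u(s,x)|\leq L_{\phi}(t-s)^{1/\alpha}\mathbb{\hat{E}}[|X_{1}|]$, and the whole time modulus has been reduced to a single number, $\mathbb{\hat{E}}[|X_{1}|]$, which is finite by the first cited lemma. Combining with the spatial bound through $|u(t,x)-u(s,y)|\leq |u(t,x)-u(s,x)|+|u(s,x)-u(s,y)|$ then gives the assertion with $C_{\phi,\mathcal{K}}=L_{\phi}\max(1,\mathbb{\hat{E}}[|X_{1}|])$.

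It remains to make the dependence of $\mathbb{\hat{E}}[|X_{1}|]$ on $\mathcal{K}$ explicit. Here I would run a Dynkin-type argument on a smooth convex surrogate of $|\cdot|$. Take $\psi(x)=\sqrt{1+x^{2}}$, which is $C^{2}$ with $||\psi'||_{\infty},||\psi''||_{\infty}\leq 1$ and $\psi\geq |x|$. A second-order Taylor expansion together with the crude Lipschitz bound shows $|\delta_{z}\psi(x)|\leq C(|z|^{2}\wedge|z|)$ uniformly in $x$, whence $\int_{\mathbb{R}}|\delta_{z}\psi(x)|F_{k_{\pm}}(dz)\leq C\mathcal{K}$ for every $k_{\pm}\in K_{\pm}$ by the very definition of $\mathcal{K}$. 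Applying this to $w(t,x)=\mathbb{\hat{E}}[\psi(x+X_{t})]$—which is convex in $x$ (inherited from $\psi$ via sub-additivity and positive homogeneity of $\mathbb{\hat{E}}$)—and evaluating the associated PIDE at $x=0$, $t=1$ gives $\mathbb{\hat{E}}[|X_{1}|]\leq w(1,0)\leq \psi(0)+C\mathcal{K}=1+C\mathcal{K}$. Substituting this into the previous paragraph produces a constant $C_{\phi,\mathcal{K}}$ depending only on $L_{\phi}$ and $\mathcal{K}$ (for the fixed $\alpha$), as required; note in particular that passing through the scaling to $t=1$ keeps the constant independent of $T$.

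The step I expect to be the main obstacle is the last one. The surrogate $\psi$ is unbounded and so lies outside the $C_{b,Lip}$ hypothesis under which the representation and the viscosity theory are stated; equally, $|\cdot|$ itself fails to be differentiable at the origin, which is exactly where the small-jump activity of the singular measure $F_{k_{\pm}}$ concentrates. Making the Dynkin computation rigorous therefore requires either a bounded truncation $\psi_{M}$ followed by a limiting argument justified by the a priori finiteness $\mathbb{\hat{E}}[|X_{1}|]<\infty$, or a mollification of the surrogate so that $w$ is a genuine classical (super)solution on which the nonlocal operator acts pointwise. This is precisely the point at which the heavy-tailed, nonlocal character of the problem must be controlled, and it is the only place where the argument goes beyond routine manipulation of the sublinear expectation.
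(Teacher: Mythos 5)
The paper itself contains no proof of this lemma --- it is imported verbatim from \cite{BM2016} --- so your proposal must be measured against the original argument there, and in its first two thirds it coincides with it in substance: boundedness and the spatial Lipschitz bound via Proposition \ref{Prop^E Plimi}(ii) are exactly the standard steps, and your time estimate, which reduces $\mathbb{\hat{E}}[|X_t-X_s|]$ to $(t-s)^{1/\alpha}\mathbb{\hat{E}}[|X_1|]$ through stationarity of increments and the scaling lemma, is the same mechanism as in \cite{BM2016}, except that you bypass the dynamic programming identity of Lemma \ref{DPP} (a mild simplification; the DPP route instead writes $u(t,x)=\mathbb{\hat{E}}[u(s,x+X_{t-s})]$ and uses the spatial Lipschitz bound of $u(s,\cdot)$). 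Where you genuinely depart is in making the dependence on $\mathcal{K}$ explicit through a smooth surrogate of $|\cdot|$, and you correctly identified the one step that cannot be run as written: $w(t,x)=\mathbb{\hat{E}}[\psi(x+X_t)]$ need not be smooth, so ``evaluating the associated PIDE at $x=0$, $t=1$'' is not meaningful, and the viscosity theory and comparison principle invoked in this paper (Proposition 5.5 of \cite{NN2017}) are stated for bounded data, which the unbounded $\psi$ violates. The repair you sketch does work, and in its cleanest form runs as follows: choose $\psi_M=\chi_M(\psi)\in C_b^2(\mathbb{R})$, where $\chi_M$ is a smooth concave cutoff with $\chi_M(r)=r$ for $r\leq M-1$, $0\leq\chi_M'\leq1$ and $|\chi_M''|$ bounded uniformly in $M$; then $\|\psi_M'\|_\infty,\|\psi_M''\|_\infty\leq c$ uniformly, so the two Taylor bounds give $|\delta_z\psi_M(x)|\leq c(|z|\wedge|z|^2)$ and hence
\[
\partial_t\big(\psi_M(x)+c\mathcal{K}t\big)-\sup_{k_{\pm}\in K_{\pm}}\Big\{\int_{\mathbb{R}}\delta_z\psi_M(x)\,F_{k_{\pm}}(dz)\Big\}\geq c\mathcal{K}-c\mathcal{K}=0,
\]
i.e.\ the bounded function $\psi_M(x)+c\mathcal{K}t$ is a classical, hence viscosity, supersolution of (\ref{1.1}); comparison with the bounded solution $\mathbb{\hat{E}}[\psi_M(x+X_t)]$ (data $\psi_M\in C_{b,Lip}$, so Lemma \ref{DPP} applies) yields $\mathbb{\hat{E}}[|X_1|\wedge(M-1)]\leq\mathbb{\hat{E}}[\psi_M(X_1)]\leq1+c\mathcal{K}$, and letting $M\to\infty$ --- legitimate because $\mathbb{\hat{E}}$ is a supremum of linear expectations in this framework, so $\sup_M\mathbb{\hat{E}}[|X_1|\wedge M]=\mathbb{\hat{E}}[|X_1|]$ by monotone convergence under each measure --- gives $\mathbb{\hat{E}}[|X_1|]\leq1+c\mathcal{K}$. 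Note that this truncation route \emph{proves} the finiteness $\mathbb{\hat{E}}[|X_1|]<\infty$ rather than presupposing it, so your limiting argument need not lean on the cited moment lemma at all; also, the convexity of $w$ you establish is true but never used. With this repair your proof is complete and delivers, as required, a $T$-independent constant $C_{\phi,\mathcal{K}}=L_\phi\max(1,1+c\mathcal{K})$ depending only on the Lipschitz constant of $\phi$ and on $\mathcal{K}$.
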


\section{Main results}

First, we construct a sublinear expectation space and introduce random
variables on it. For each $k_{\pm}\in K_{\pm}\subset(\lambda_{1},\lambda_{2})$
for some $\lambda_{1},\lambda_{2}\geq0$, let $W_{k_{\pm}}$ be a classical mean
zero random variable with a cumulative distribution function (cdf)
\begin{equation}
F_{W_{k_{\pm}}}(z)=\left \{
\begin{array}
[c]{ll}%
\displaystyle \left[  k_{-}/\alpha+\beta_{1,k_{\pm}}(z)\right]  \frac
{1}{|z|^{\alpha}}, & z<0,\\
\displaystyle1-\left[  k_{+}/\alpha+\beta_{2,k_{\pm}}(z)\right]  \frac
{1}{z^{\alpha}}, & z>0,
\end{array}
\right.  \label{2.1}%
\end{equation}
for some functions $\beta_{1,k_{\pm}}:$ $(-\infty,0]$ $\rightarrow \mathbb{R}$
and $\beta_{2,k_{\pm}}:[0,\infty)\rightarrow \mathbb{R}$ such that%
\[
\lim_{z\rightarrow-\infty}\beta_{1,k_{\pm}}(z)=\lim_{z\rightarrow \infty}%
\beta_{2,k_{\pm}}(z)=0.
\]
Define a sublinear expectation $\mathbb{\tilde{E}}$ on $C_{Lip}(\mathbb{R})$
by%
\begin{equation}
\mathbb{\tilde{E}}[\varphi]=\sup_{k_{\pm}\in K_{\pm}}\int_{\mathbb{R}}%
\varphi(z)dF_{W_{k_{\pm}}}(z),\text{ }\forall \varphi \in C_{Lip}(\mathbb{R}).
\label{E^wan}%
\end{equation}
Clearly, $(\mathbb{R},C_{Lip}(\mathbb{R}),\mathbb{\tilde{E}})$ is a sublinear
expectation space. Let $\xi$ be a random variable on this space given by
\[
\xi(z)=z\text{, \ for all }z\in \mathbb{R}.
\]
Since $W_{k\pm}$\ has mean zero, this yields $\mathbb{\tilde{E}}%
[\xi]=\mathbb{\tilde{E}}[-\xi]=0$.



We need the following assumptions, which are motivated by Example 4.2 in
\cite{BM2016}.

\begin{description}
\item[(A1)]
For each $k_{\pm}\in K_{\pm}$, $\beta_{1,k_{\pm}}$ and $\beta_{2,k_{\pm}}$ are
continuously differentiable functions in \eqref{2.1} satisfying\[
\int_{\mathbb{R}}zdF_{W_{k_{\pm}}}(z)=0.
\]

\item[(A2)] There exists a constant $M>0$ such that for any $k_{\pm}\in
K_{\pm}$, the following quantities are less than $M$:%
\[%
\begin{array}
[c]{lll}%
\displaystyle \left \vert \int_{-\infty}^{-1}\frac{\beta_{1,k_{\pm}}%
(z)}{|z|^{\alpha}}dz\right \vert ,\text{\ } &  & \displaystyle \left \vert
\int_{1}^{\infty}\frac{\beta_{2,k_{\pm}}(z)}{z^{\alpha}}dz\right \vert .
\end{array}
\]

\item[(A3)] There exists a constant $q>0$ such that for any $k_{\pm}\in K_{\pm}$
and $\Delta \in(0,1)$, the following quantities are less than $C\Delta^{q}$:
\[%
\begin{array}
[c]{lll}%
\displaystyle|\beta_{1,k_{\pm}}(-\Delta^{-1/\alpha})|,\text{ \  \ } &
\displaystyle \int_{-\infty}^{-1}\frac{|\beta_{1,k_{\pm}}(\Delta^{-1/\alpha
}z)|}{|z|^{\alpha}}dz,\text{ \ } & \displaystyle \int_{-1}^{0}\frac
{|\beta_{1,k_{\pm}}(\Delta^{-1/\alpha}z)|}{|z|^{\alpha-1}}dz,\\
&  & \\
\displaystyle|\beta_{2,k_{\pm}}(\Delta^{-1/\alpha})|,\text{ \ } &
\displaystyle \int_{1}^{\infty}\frac{|\beta_{2,k_{\pm}}(\Delta^{-1/\alpha}%
z)|}{z^{\alpha}}dz,\text{ \ } & \displaystyle \int_{0}^{1}\frac{|\beta
_{2,k_{\pm}}(\Delta^{-1/\alpha}z)|}{z^{\alpha-1}}dz,
\end{array}
\]
where $C>0$ is a constant.
\end{description}

\begin{remark}
Note that by Assumption (A1) alone, the terms in (A2) are finite and the terms
in (A3) approach zero as $\Delta \rightarrow0$. In other words, the content of
(A2) and (A3) is the uniform bounds and the existence of minimum convergence rates.
\end{remark}

\begin{remark}
By (\ref{2.1}), we can write $\beta_{1,k_{\pm}}$ and $\beta_{2,k_{\pm}}$ as%
\begin{align*}
\beta_{1,k_{\pm}}(z)  &  =F_{W_{k\pm}}(z)|z|^{\alpha}-\frac{k_{-}}{\alpha
},\text{ \ }z\in(-\infty,0],\\
\beta_{2,k_{\pm}}(z)  &  =(1-F_{W_{k\pm}}(z))z^{\alpha}-\frac{k_{+}}{\alpha
},\text{ \ }z\in \lbrack0,\infty).
\end{align*}
Under Assumption (A1), it can be checked that for any $k_{\pm}\in K_{\pm}$ the
following quantities are uniformly bounded (we also assume the uniform bound
is $M$):
\[%
\begin{array}
[c]{lll}%
\displaystyle|\beta_{1,k_{\pm}}(-1)|,\text{\ } &  & \displaystyle \int
_{-1}^{0}\frac{|-\beta_{1,k_{\pm}}^{\prime}(z)z+\alpha \beta_{1,k_{\pm}}%
(z)|}{|z|^{\alpha-1}}dz,\\
&  & \\
\displaystyle|\beta_{2,k_{\pm}}(1)|, &  & \displaystyle \int_{0}^{1}%
\frac{|-\beta_{2,k_{\pm}}^{\prime}(z)z+\alpha \beta_{2,k_{\pm}}(z)|}%
{z^{\alpha-1}}dz.
\end{array}
\]

\end{remark}

\begin{remark}
Under Assumptions (A1)-(A2), it is easy to check that
\[
\mathbb{\tilde{E}}[|\xi|]=\mathbb{\tilde{E}}\left[  \int_{0}^{\infty
}\mathbf{1}_{\{|\xi|>z\}}dz\right]  =\sup_{k_{\pm}\in K_{\pm}}\left \{
\int_{0}^{\infty}P_{k_{\pm}}(|\xi|>z)dz\right \}  ,
\]
where $\{P_{k_{\pm}},$ $k_{\pm}\in K_{\pm}\}$ is the set of probability
measures related to uncertainty distributions $\{F_{W_{k\pm}},k_{\pm}\in
K_{\pm}\}$. Then, it follows that%
\begin{align*}
\mathbb{\tilde{E}}[|\xi|]  &  \leq1+\sup_{k_{\pm}\in K_{\pm}}\left \{  \int
_{1}^{\infty}P_{k_{\pm}}(|\xi|>z)dz\right \} \\
&  \leq1+\sup_{k_{\pm}\in K_{\pm}}\left \{  \frac{k_{-}+k_{+}}{\alpha
(\alpha-1)}+\left \vert \int_{1}^{\infty}\frac{\beta_{2,k_{\pm}}(z)}{z^{\alpha
}}dz\right \vert +\left \vert \int_{1}^{\infty}\frac{\beta_{1,k_{\pm}}%
(-z)}{z^{\alpha}}dz\right \vert \right \}  <\infty.
\end{align*}
Similarly, we know that
\begin{align*}
\mathbb{\tilde{E}}[|\xi|^{2}]  &  \geq \int_{1}^{\infty}P_{k_{\pm}}(|\xi
|>\sqrt{z})dz\\
&  =\int_{1}^{\infty}\frac{k_{+}/\alpha+\beta_{2,k_{\pm}}(\sqrt{z})}%
{z^{\alpha/2}}dz+\int_{1}^{\infty}\frac{k_{-}/\alpha+\beta_{1,k_{\pm}}%
(-\sqrt{z})}{z^{\alpha/2}}dz=\infty.
\end{align*}

\end{remark}

Let $(\xi_{i})_{i=1}^{\infty}$ be a sequence of i.i.d. $\mathbb{R}$-valued
random variables defined on $(\mathbb{R},C_{Lip}(\mathbb{R}),\mathbb{\tilde
{E}})$ in the sense that $\xi_{1}=\xi$, $\xi_{i+1}\overset{d}{=}\xi_{i}$ and
$\xi_{i+1}\perp(\xi_{1},\xi_{2},\ldots,\xi_{i})$ for each $i\in \mathbb{N}$,
and denote
\begin{equation}
\bar{S}_{n}:=\sum_{i=1}^{n}\frac{\xi_{i}}{\sqrt[\alpha]{n}}. \label{Sn}%
\end{equation}

Now we state our first main result. 

\begin{theorem}
\label{main theorem}Suppose that (A1)-(A3) hold. Let $(\bar{S}_{n})_{n=1}^{\infty
}$ be a sequence defined in (\ref{Sn}), $(X_{t})_{t\geq0}$ be a nonlinear
$\alpha$-stable L\'{e}vy process with the characteristic set $\Theta$.
Then, for any $\phi \in C_{b,Lip}(\mathbb{R})$
\begin{equation}
\big|\mathbb{\tilde{E}}[\phi(\bar{S}_{n})]-\mathbb{\hat{E}}[\phi(X_{1})]\big|\leq
C_{0}n^{-\Gamma(\alpha,q)},\label{CLT rate}\end{equation}
where $\Gamma(\alpha,q)=\min \{ \frac{1}{4},\frac{2-\alpha}{2\alpha},\frac{q}{2}\}$ with $q>0$ given in (A3), and $C_{0}$ is a constant
depending on the Lipschitz constant of $\phi$, which will be given in Theorem
\ref{main theorem 2}.
\end{theorem}

\begin{remark}
The classical $\alpha$-stable central limit theorem (see, for example, Ibragimov and Linnik
\cite[Theorem 2.6.7]{IL1971}) states that for a classical mean-zero random variable $\xi
_{1}$, the sequence $\bar{S}_{n}$ converges in law to $X_{1}$ as
$n\rightarrow \infty$, if and only if the cdf of $\xi$ has the form given in
(\ref{2.1}), where $(X_{t})_{t\geq0}$ is a classical L\'{e}vy process with
triplet $(0,0,F_{k_{\pm}})$. In the framework of sublinear expectation,
sufficient conditions for the $\alpha$-stable central limit theorem are given
in Bayraktar and Munk \cite{BM2016}. They show that, for a mean-zero random
variable $\xi_{1}$ under the sublinear expectation $\mathbb{\tilde{E}}$
defined above, $\bar{S}_{n}$ converges in law to $X_{1}$ as $n\rightarrow
\infty$, where $(X_{t})_{t\geq0}$ is a nonlinear L\'{e}vy process with triplet
set $\Theta$. In this paper, Theorem \ref{main theorem} further provides an explicit
convergence rate of the limit theorem in \cite{BM2016}, which can be seen as a special $\alpha$-stable central limit
theorem under the sublinear expectation.
\end{remark}


\begin{remark}
Assumptions $(A1)$-$(A3)$ are sufficient conditions for Theorem 3.1 in
\cite{BM2016}. Indeed, by Proposition 2.10 of \cite{BM2016}, we know that for
any $0<h<1$, $u\in C_{b}^{1,2}([h,1+h]\times \mathbb{R})$. Under Assumptions
$(A1)$-$(A3)$, by using part II in Proposition \ref{prop} (iii) (see Section
5), one gets for any $\phi \in C_{b,Lip}(\mathbb{R})$ and $0<h<1$,
\begin{equation}
n\bigg \vert \mathbb{\tilde{E}}\big[\delta_{n^{-1/\alpha}\xi_{1}}%
v(t,x)\big]-\frac{1}{n}\sup \limits_{k_{\pm}\in K_{\pm}}\bigg \{ \int
_{\mathbb{R}}\delta_{z}v(t,x)F_{k_{\pm}}(dz)\bigg \} \bigg \vert \rightarrow
0\label{B condition}%
\end{equation}
uniformly on $[0,1]\times \mathbb{R}$ as $n\rightarrow \infty$, where $v$ is the
unique viscosity solution of%
\[
\left \{
\begin{array}
[c]{ll}%
\displaystyle \partial_{t}v(t,x)+\sup \limits_{k_{\pm}\in K_{\pm}}\left \{
\int_{\mathbb{R}}\delta_{z}v(t,x)F_{k_{\pm}}(dz)\right \}  =0, & (-h,1+h)\times
\mathbb{R},\\
\displaystyle v(1+h,x)=\phi(x),\text{ \ }x\in \mathbb{R}. &
\end{array}
\right.
\]
In addition, the necessary conditions for the $\alpha$-stable central limit
theorem under sublinear expectation are still unknown.
\end{remark}

\section{Two examples}

In this section, we shall give two examples to illustrate\ our results.

\begin{example}
\label{exp1}Let $( \xi_{i})_{i=1}^{\infty}$ be a sequence of i.i.d.
$\mathbb{R}$-valued random variables defined on $(\mathbb{R},C_{Lip}%
(\mathbb{R}),\mathbb{\tilde{E}})$ with cdf (\ref{2.1}) satisfying
$\beta_{1,k_{\pm}}(z)=0$ for $z\leq-1$ and $\beta_{2,k_{\pm}}(z)=0$ for
$z\geq1$ with $\lambda_{2}<\frac{\alpha}{2}$. The exact expressions of
$\beta_{1,k_{\pm}}(z)$ and $\beta_{2,k_{\pm}}(z)$ for $0<|z|<1$ are not
specified here, but we require $\beta_{1,k_{\pm}}(z)$ and $\beta_{2,k_{\pm}%
}(z)$ to satisfy Assumption (A1). It is clear that Assumption (A2) holds. In
addition, for each\ $k_{\pm}\in K_{\pm}$ and $\Delta \in(0,1)$
\[
\int_{0}^{1}\frac{|\beta_{2,k_{\pm}}(\Delta^{-1/\alpha}z)|}{z^{\alpha-1}%
}dz=\int_{0}^{\Delta^{1/\alpha}}\frac{|\beta_{2,k_{\pm}}(\Delta^{-1/\alpha
}z)|}{z^{\alpha-1}}dz\leq \frac{c}{2-\alpha}\Delta^{\frac{2-\alpha}{\alpha}},
\]
where $c:=\sup \limits_{z\in(0,1)}|\beta_{2,k_{\pm}}(z)|<\infty$, and similarly
for the negative half-line. This indicates that Assumption (A3) holds with
$q=\frac{2-\alpha}{\alpha}$. According to Theorem \ref{main theorem}, we get
the convergence rate
\[
\big|\mathbb{\tilde{E}}[\phi(\bar{S}_{n})]-\mathbb{\hat{E}}[\phi
(X_{1})]\big|\leq C_{0}n^{-\Gamma(\alpha)},
\]
where $\Gamma(\alpha)=\min \{ \frac{1}{4},\frac{2-\alpha}{2\alpha}\}$.
\end{example}

\begin{example}
\label{exp2}Let $( \xi_{i})_{i=1}^{\infty}$ be a sequence of i.i.d.
$\mathbb{R}$-valued random variables defined on $(\mathbb{R},C_{Lip}%
(\mathbb{R}),\mathbb{\tilde{E}})$ with cdf (\ref{2.1}) satisfying
$\beta_{1,k_{\pm}}(z)=a_{1}|z|^{\alpha-\beta}$ for $z\leq-1$, $\beta
_{2,k_{\pm}}(z)=$ $a_{2}z^{\alpha-\beta}\ $for $z\geq1$ with $\beta>\alpha$
and two proper constants $a_{1}, a_{2}$. The exact expressions of $\beta
_{1,k_{\pm}}(z)$ and $\beta_{2,k_{\pm}}(z)$ for $0<|z|<1$ are not specified
here, but we require that $\beta_{1,k_{\pm}}(z)$ and $\beta_{2,k_{\pm}}(z)$
satisfy Assumption (A1). For simplicity, we will only check the integral along
the positive half-line, and similarly for the negative half-line. Observe
that
\[
\int_{1}^{\infty}\frac{\beta_{2,k_{\pm}}(z)}{z^{\alpha}}dz=\frac{a_{2}}%
{\beta-1},
\]
which shows that (A2) holds. Also, it can be verified that for each\ $k_{\pm
}\in K_{\pm}$ and $\Delta \in(0,1)$
\[
|\beta_{2,k_{\pm}}(\Delta^{-1/\alpha})|=a_{2}\Delta^{\frac{\beta-\alpha
}{\alpha}},\text{ \  \  \ }\int_{1}^{\infty}\frac{|\beta_{2,k_{\pm}}%
(\Delta^{-1/\alpha}z)|}{z^{\alpha}}dz=\frac{a_{2}}{\beta-1}\Delta^{\frac
{\beta-\alpha}{\alpha}},
\]
and%
\begin{align*}
\int_{0}^{1}\frac{|\beta_{2,k_{\pm}}(\Delta^{-1/\alpha}z)|}{z^{\alpha-1}}dz
&  =\int_{0}^{\Delta^{1/\alpha}}\frac{|\beta_{2,k_{\pm}}(\Delta^{-1/\alpha
}z)|}{z^{\alpha-1}}dz+\int_{\Delta^{1/\alpha}}^{1}\frac{|\beta_{2,k_{\pm}%
}(\Delta^{-1/\alpha}z)|}{z^{\alpha-1}}dz\\
&  \leq \frac{c}{2-\alpha}\Delta^{\frac{2-\alpha}{\alpha}}+a_{2}\Delta
^{\frac{\beta-\alpha}{\alpha}}\int_{\Delta^{1/\alpha}}^{1}z^{1-\beta}dz,
\end{align*}
where $c=\sup \limits_{z\in(0,1)}|\beta_{2,k_{\pm}}(z)|<\infty$. We further
distinguish three cases based on the value of $\beta$.

\begin{enumerate}
\item[(1)] If $\beta=2$, we have
\[
\int_{0}^{1}\frac{|\beta_{2,k_{\pm}}(\Delta^{-1/\alpha}z)|}{z^{\alpha-1}%
}dz\leq \frac{c}{2-\alpha}\Delta^{\frac{2-\alpha}{\alpha}}+a_{2}\Delta
^{\frac{2-\alpha}{\alpha}}\ln \Delta^{-\frac{1}{\alpha}}\leq C\Delta
^{\frac{2-\alpha}{\alpha}-\varepsilon},
\]
where $C=\frac{c}{2-\alpha}+a_{2}$ and any small $\varepsilon>0$.

\item[(2)] If $\alpha<\beta<2$, we have
\[
\int_{0}^{1}\frac{|\beta_{2,k_{\pm}}(\Delta^{-1/\alpha}z)|}{z^{\alpha-1}%
}dz\leq \frac{c}{2-\alpha}\Delta^{\frac{2-\alpha}{\alpha}}+\frac{a_{2}}%
{2-\beta}(\Delta^{\frac{\beta-\alpha}{\alpha}}-\Delta^{\frac{2-\alpha}{\alpha
}})\leq C\Delta^{\frac{\beta-\alpha}{\alpha}},
\]
where $C=\frac{c}{2-\alpha}+\frac{2a_{2}}{2-\beta}$.

\item[(3)] If $\beta>2$, it follows that
\[
\int_{0}^{1}\frac{|\beta_{2,k_{\pm}}(\Delta^{-1/\alpha}z)|}{z^{\alpha-1}%
}dz\leq \frac{c}{2-\alpha}\Delta^{\frac{2-\alpha}{\alpha}}+\frac{a_{2}}%
{\beta-2}(\Delta^{\frac{2-\alpha}{\alpha}}-\Delta^{\frac{\beta-\alpha}{\alpha
}})\leq C\Delta^{\frac{2-\alpha}{\alpha}},
\]
where $C=\frac{c}{2-\alpha}+\frac{2a_{2}}{\beta-2}$.
\end{enumerate}

Then, Assumption (A3) holds with
\[
q=\left \{
\begin{array}
[c]{ll}%
\frac{2-\alpha}{\alpha}-\varepsilon, & \text{if }\beta=2,\\
\frac{\beta-\alpha}{\alpha}, & \text{if }\alpha<\beta<2,\\
\frac{2-\alpha}{\alpha}, & \text{if }\beta>2,
\end{array}
\right.
\]
for any small $\varepsilon>0$. From Theorem \ref{main theorem}, we can
immediately obtain that
\[
\big|\mathbb{\tilde{E}}[\phi(\bar{S}_{n})]-\mathbb{\hat{E}}[\phi
(X_{1})]\big|\leq C_{0}n^{-\Gamma(\alpha,\beta)},
\]
where
\[
\Gamma(\alpha,\beta)=\left \{
\begin{array}
[c]{ll}%
\min \{ \frac{1}{4},\frac{2-\alpha}{2\alpha}-\frac{\varepsilon}{2}\}, &
\text{if }\beta=2,\\
\min \{ \frac{1}{4},\frac{\beta-\alpha}{2\alpha}\}, & \text{if }\alpha
<\beta<2,\\
\min \{ \frac{1}{4},\frac{2-\alpha}{2\alpha}\}, & \text{if }\beta>2,
\end{array}
\right.
\]
with $\varepsilon>0$.
\end{example}

\section{Proof of Theorem \ref{main theorem}: monotone scheme method}

In this section, we shall introduce the numerical analysis tools of nonlinear
partial differential equations to prove Theorem \ref{main theorem}. Noting
that $\mathbb{\hat{E}}[\phi(X_{1})\mathbb{]}=u(1,0)$, where $u$ is the
viscosity solution of (\ref{u_PIDE}), we propose a discrete scheme to
approximate $u$ by merely using the random variable $\xi$ under
$\mathbb{\tilde{E}}$ as input. For given $T>0$ and $\Delta \in(0,1)$, define
$u_{\Delta}:[0,T]\times \mathbb{R\rightarrow R}$\ recursively by
\begin{equation}%
\begin{array}
[c]{l}%
u_{\Delta}(t,x)=\phi(x),\text{ \ if }t\in \lbrack0,\Delta),\\
u_{\Delta}(t,x)=\mathbb{\tilde{E}}[u_{\Delta}(t-\Delta,x+\Delta^{\frac
{1}{\alpha}}\xi)],\text{ \ if }t\in \lbrack \Delta,T].
\end{array}
\label{2.2}%
\end{equation}
From the above recursive process, we can see for each $x\in \mathbb{R}$ and
$n\in \mathbb{N}$ such that $n\Delta \leq T$, $u_{\Delta}(\cdot,x)$ is a
constant on the interval $[n\Delta,(n+1)\Delta \wedge T)$, that is,
\[
u_{\Delta}(t,x)=u_{\Delta}(n\Delta,x),\text{ \ }\forall t\in \lbrack
n\Delta,(n+1)\Delta \wedge T).
\]

By induction (see Theorem 2.1 in \cite{HL2020}), we can derive that for all
$n\in \mathbb{N}$ such that $n\Delta \leq T$ and $x\in \mathbb{R}$
\[
u_{\Delta}(n\Delta,x)=\mathbb{\tilde{E}}\Big[\phi \Big(x+\Delta^{\frac
{1}{\alpha}}\sum_{i=1}^{n}\xi_{i}\Big)\Big].
\]
In particular, taking $T=1$ and $\Delta=\frac{1}{n}$, we have%
\[
u_{\Delta}(1,0)=\mathbb{\tilde{E}[}\phi(\bar{S}_{n})],
\]
and Theorem \ref{main theorem} follows from the following result.


\begin{theorem}
\label{main theorem 2} Suppose that (A1)-(A3) hold and $\phi \in C_{b,Lip}%
(\mathbb{R})$. Then, for any $\left(  t,x\right)  \in \lbrack0,T]\times
\mathbb{R}$,
\[
\left \vert u(t,x)-u_{\Delta}(t,x)\right \vert \leq C_{0}\Delta^{\Gamma
(\alpha,q)},
\]
where the Berry-Esseen constant $C_{0}=L_{0}\vee U_{0}$ with $L_{0}$ and
$U_{0}$ given explicitly in Lemma \ref{lower bound} and Lemma
\ref{upper bound}, respectively, and
\begin{equation}%
\begin{array}
[c]{r}%
\Gamma(\alpha,q)=\min \{ \frac{1}{4},\frac{2-\alpha}{2\alpha},\frac{q}{2}\}.
\end{array}
\label{tau}%
\end{equation}

\end{theorem}

\subsection{Regularity estimates}

To prove Theorem \ref{main theorem 2}, we first need to establish the space
and time regularity properties of $u_{\Delta}$, which are crucial for proving
the convergence of $u_{\Delta}$ to $u$ and determining its convergence rate.
Before showing our regularity estimates of $u_{\Delta}$, denote
\begin{align*}
I_{1,\Delta}  &  =\sup \limits_{k_{\pm}\in K_{\pm}}\bigg \{ \frac{k_{-}+k_{+}%
}{2-\alpha}+2\int_{0}^{1}\frac{|\beta_{1,k_{\pm}}(-\Delta^{-\frac{1}{\alpha}%
}z)|+|\beta_{2,k_{\pm}}(\Delta^{-\frac{1}{\alpha}}z)|}{z^{\alpha-1}}%
dz+|\beta_{1,k_{\pm}}(-\Delta^{-\frac{1}{\alpha}})|+|\beta_{2,k_{\pm}}%
(\Delta^{-\frac{1}{\alpha}})|\bigg \},\\
I_{2,\Delta}  &  =\sup \limits_{k_{\pm}\in K_{\pm}}\bigg \{ \frac{k_{-}+k_{+}%
}{\alpha-1}+\int_{1}^{\infty}\frac{|\beta_{1,k_{\pm}}(-\Delta^{-\frac
{1}{\alpha}}z)|+|\beta_{2,k_{\pm}}(\Delta^{-\frac{1}{\alpha}}z)|}{z^{\alpha}%
}dz+|\beta_{1,k_{\pm}}(-\Delta^{-\frac{1}{\alpha}})|+|\beta_{2,k_{\pm}}%
(\Delta^{-\frac{1}{\alpha}})|\bigg \}.
\end{align*}

\begin{theorem}
\label{u_num_regularity}Suppose that (A1) and (A3) hold and $\phi \in
C_{b,Lip}(\mathbb{R})$. Then,

\begin{description}
\item[(i)] for any $t\in \lbrack0,T]$ and $x,y\in \mathbb{R}$,
\[
\left \vert u_{\Delta}(t,x)-u_{\Delta}(t,y)\right \vert \leq C_{\phi}|x-y|;
\]

\item[(ii)] for any $t,s\in \lbrack0,T]$ and $x\in \mathbb{R}$,%
\[
\left \vert u_{\Delta}(t,x)-u_{\Delta}(s,x)\right \vert \leq C_{\phi}I_{\Delta
}(|t-s|^{1/2}+\Delta^{1/2}),
\]

\end{description}

where $C_{\phi}$ is the Lipschitz constant of $\phi$ and $I_{\Delta}%
=\sqrt{I_{1,\Delta}}+2I_{2,\Delta}$ with $I_{\Delta}<\infty$.
\end{theorem}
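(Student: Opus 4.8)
The plan is to prove (i) by a direct induction on the time-step index, and (ii) by reducing the time increment to a moment estimate for a sum of independent copies of $\xi$, which is then split into a diffusive (truncated) part and a tail part, matching the quantities $I_{1,\Delta}$ and $I_{2,\Delta}$ respectively. Throughout I write $B_{\Delta}=\Delta^{1/\alpha}$.

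\textbf{Part (i).} Since $u_{\Delta}(\cdot,x)$ is constant on each interval $[k\Delta,(k+1)\Delta\wedge T)$, it suffices to prove the estimate at the grid times $t=k\Delta$ and argue by induction on $k$. For $k=0$ we have $u_{\Delta}(t,\cdot)=\phi$, so the bound holds with the Lipschitz constant $C_{\phi}$. Assuming the bound at step $k-1$, I would apply Proposition \ref{Prop^E Plimi}(ii) to the recursion \eqref{2.2} to obtain
\[
|u_{\Delta}(k\Delta,x)-u_{\Delta}(k\Delta,y)|\le\mathbb{\tilde{E}}\big[|u_{\Delta}((k-1)\Delta,x+\Delta^{1/\alpha}\xi)-u_{\Delta}((k-1)\Delta,y+\Delta^{1/\alpha}\xi)|\big].
\]
The induction hypothesis bounds the integrand pointwise by $C_{\phi}|x-y|$ (the common shift $\Delta^{1/\alpha}\xi$ cancels), and constant preservation closes the step.

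\textbf{Part (ii).} First I would reduce to grid times. Iterating \eqref{2.2} and using the independence of the increments (the discrete dynamic programming principle), for $m>n$ one has $u_{\Delta}(m\Delta,x)=\mathbb{\tilde{E}}[u_{\Delta}(n\Delta,x+\Delta^{1/\alpha}S_{\ell})]$ with $S_{\ell}=\sum_{i=1}^{\ell}\xi_{i}$, $\ell=m-n$, and $\xi_{i}$ independent copies of $\xi$. Combining this with part (i) and Proposition \ref{Prop^E Plimi}(ii), and using that $u_{\Delta}(n\Delta,x)$ is a constant with respect to the randomness,
\[
|u_{\Delta}(m\Delta,x)-u_{\Delta}(n\Delta,x)|\le C_{\phi}\,\Delta^{1/\alpha}\,\mathbb{\tilde{E}}[|S_{\ell}|].
\]
For general $t,s$ I would round to the enclosing grid times; since $\ell\Delta\le|t-s|+\Delta$, the elementary inequality $\sqrt{\ell\Delta}\le|t-s|^{1/2}+\Delta^{1/2}$ is exactly what produces the additive $\Delta^{1/2}$ in the statement. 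It then remains to show $\Delta^{1/\alpha}\,\mathbb{\tilde{E}}[|S_{\ell}|]\le I_{\Delta}\sqrt{\ell\Delta}$.

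\textbf{The core estimate and the main obstacle.} I would split each increment at the scale of the scheme, $\xi_{i}=\xi_{i}\mathbf{1}_{\{|\xi_{i}|\le B_{\Delta}^{-1}\}}+\xi_{i}\mathbf{1}_{\{|\xi_{i}|>B_{\Delta}^{-1}\}}$, writing $S_{\ell}=S_{\ell}^{c}+S_{\ell}^{t}$. For the tail part, subadditivity gives $\mathbb{\tilde{E}}[|S_{\ell}^{t}|]\le\ell\,\mathbb{\tilde{E}}[|\xi|\mathbf{1}_{\{|\xi|>B_{\Delta}^{-1}\}}]$; the change of variables $z\mapsto B_{\Delta}^{-1}z$ identifies the resulting integrals, together with the mean-zero property $\mathbb{\tilde{E}}[\xi]=\mathbb{\tilde{E}}[-\xi]=0$, with the first-moment quantities assembled in $I_{2,\Delta}$, yielding a contribution of order $\ell\Delta\,I_{2,\Delta}$, which upon using $\ell\Delta\le T$ is of the $\sqrt{\ell\Delta}$ scale. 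For the diffusive part I would apply Proposition \ref{Prop^E Plimi}(iii) with $p=q=2$ and control the second moment of $S_{\ell}^{c}$; the same change of variables matches $B_{\Delta}^{2}\mathbb{\tilde{E}}[|\xi|^{2}\mathbf{1}_{\{|\xi|\le B_{\Delta}^{-1}\}}]$ with $\Delta\,I_{1,\Delta}$, producing a contribution of order $\sqrt{\ell\Delta}\,\sqrt{I_{1,\Delta}}$. The main obstacle is precisely this second-moment control of $S_{\ell}^{c}$ under the sublinear expectation: because $\xi$ has infinite variance the naive bound is unavailable, and the truncated variable is no longer mean-zero, so the cross terms $\mathbb{\tilde{E}}[\xi_{i}^{c}\xi_{j}^{c}]$ do not simply cancel as in the linear case. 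To handle this I would introduce the truncated variable $\xi^{N}$ (with $N=B_{\Delta}^{-1}$) of finite variance and the associated truncated scheme $u_{\Delta,N}$, establish a $\tfrac12$-order time regularity for $u_{\Delta,N}$ by adapting Krylov's regularization (shaking-coefficients) method—legitimate because $\xi^{N}$ has finite variance and its generator is a genuine local second-order operator—and transfer the bound back to $u_{\Delta}$ through the truncation estimate controlled by $\mathbb{\tilde{E}}[|\xi-\xi^{N}|]$ over the increment. Summing the diffusive and tail contributions then yields the constant $I_{\Delta}=\sqrt{I_{1,\Delta}}+2I_{2,\Delta}$ and completes the proof.
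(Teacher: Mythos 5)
Your proposal lands, after its own detour, on essentially the same proof as the paper: part (i) is the identical induction, and your fallback for part (ii) --- truncate at $N=B_{\Delta}^{-1}=\Delta^{-1/\alpha}$, prove $\tfrac12$-order time regularity for the truncated scheme $u_{\Delta,N}$, transfer back through $\mathbb{\tilde{E}}[|\xi-\xi^{N}|]$, then pass from increments at time $0$ to general grid increments via the i.i.d.\ representation and to arbitrary $t,s$ by rounding (the source of the additive $\Delta^{1/2}$) --- is exactly the combination of Lemmas \ref{u_N regularity} and \ref{num-truncated} in the paper, with $2I_{2,\Delta}$ arising from two separate $D_{N}$-contributions and $\sqrt{I_{1,\Delta}}$ from the second moment of $\xi^{N}$. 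Two corrections to your sketch of the key step, one of them substantive. First, the Krylov technique that does the work for $u_{\Delta,N}$ is not shaking coefficients (that mollification appears only in Sections 5--6 for the PDE error bounds); it is the quadratic-penalty argument of \cite{Krylov2020}: write $\mathbb{\tilde{E}}=\sup_{k_{\pm}}E_{P_{k_{\pm}}}$, propagate by induction the inequality $u_{\Delta,N}(k\Delta,x)\leq u_{\Delta,N}(0,y)+A|x-y|^{2}+AM_{N}^{2}k\Delta^{2/\alpha}+C_{\phi}D_{N}k\Delta^{1/\alpha}+B$ with $A=\tfrac{\varepsilon}{2}C_{\phi}$, $B=\tfrac{1}{2\varepsilon}C_{\phi}$, then optimize over $\varepsilon$. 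Relatedly, the generator associated with $\xi^{N}$ is still nonlocal; locality plays no role --- only $\mathbb{\tilde{E}}[|\xi^{N}|^{2}]<\infty$ matters. Second, and more importantly, the non-mean-zero difficulty you flag for cross terms does \emph{not} disappear inside the truncated scheme: under each $P_{k_{\pm}}$ one has $E_{P_{k_{\pm}}}[\xi^{N}]\neq0$, and the induction only closes because $E_{P_{k_{\pm}}}[\xi]=0$ gives $|E_{P_{k_{\pm}}}[\xi^{N}]|=|E_{P_{k_{\pm}}}[\xi^{N}-\xi]|\leq D_{N}$, so the drift is absorbed through the spatial Lipschitz bound at cost $C_{\phi}D_{N}k\Delta^{1/\alpha}$ --- this is precisely one of the two $I_{2,\Delta}$ terms in $I_{\Delta}$. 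Your claim that finite variance alone legitimizes Krylov's method omits this, and without it the inductive inequality does not close.

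A remark worth recording: the ``main obstacle'' you declare fatal to the direct moment route is in fact surmountable, with the same constants. By independence and sublinearity, $\mathbb{\tilde{E}}[(s+\xi^{N})^{2}]\leq s^{2}+2|s|D_{N}+M_{N}^{2}$ (using $\mathbb{\tilde{E}}[\pm\xi^{N}]\leq D_{N}$), and an induction on the recursion $m_{j}\leq\big(\sqrt{m_{j-1}}+D_{N}\big)^{2}+M_{N}^{2}$ for $m_{j}:=\mathbb{\tilde{E}}[|S_{j}^{N}|^{2}]$ yields $\sqrt{m_{\ell}}\leq\sqrt{\ell}\,M_{N}+\ell D_{N}$, hence $\Delta^{1/\alpha}\mathbb{\tilde{E}}[|S_{\ell}|]\leq\sqrt{I_{1,\Delta}}\,(\ell\Delta)^{1/2}+2I_{2,\Delta}\,\ell\Delta$ at $N=B_{\Delta}^{-1}$, which is the desired bound. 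So your originally proposed decomposition could have been completed without introducing $u_{\Delta,N}$ at all; note also that in that framing the tail term should be $\mathbb{\tilde{E}}[(|\xi|-N)^{+}]$ rather than $\mathbb{\tilde{E}}[|\xi|\mathbf{1}_{\{|\xi|>N\}}]$ if you want to recover exactly $I_{2,\Delta}$.
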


Notice that $\mathbb{\tilde{E}}[\xi^{2}]=\infty$, the classical method
developed in Krylov \cite{Krylov2020} fails. To prove Theorem
\ref{u_num_regularity}, for fixed $N>0$, we define $\xi^{N}:=\xi
\mathbf{1}_{\{|\xi|\leq N\}}$ and introduce the following truncated scheme
$u_{\Delta,N}:[0,T]\times \mathbb{R\rightarrow R}$ recursively by%
\begin{equation}%
\begin{array}
[c]{l}%
u_{\Delta,N}(t,x)=\phi(x),\text{ \ if }t\in \lbrack0,\Delta),\\
u_{\Delta,N}(t,x)=\mathbb{\tilde{E}}[u_{\Delta,N}(t-\Delta,x+\Delta^{\frac
{1}{\alpha}}\xi^{N})],\text{ \ if }t\in \lbrack \Delta,T].
\end{array}
\label{3.1}%
\end{equation}

We get the following estimates.

\begin{lemma}
\label{truncted_moment estimate}For each fixed $N>0$, we have
\[
\mathbb{\tilde{E}}[|\xi^{N}|^{2}]=N^{2-\alpha}I_{1,N},
\]
where%
\[
I_{1,N}:=\sup_{k_{\pm}\in K_{\pm}}\left \{  \frac{k_{-}+k_{+}}{2-\alpha}%
+2\int_{0}^{1}\frac{\beta_{1,k_{\pm}}(-zN)+\beta_{2,k_{\pm}}(zN)}{z^{\alpha
-1}}dz-\beta_{1,k_{\pm}}(-N)-\beta_{2,k_{\pm}}(N)\right \}  .
\]

\end{lemma}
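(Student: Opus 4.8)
The plan is to compute $\mathbb{\tilde{E}}[|\xi^N|^2]$ directly from the definition \eqref{E^wan} of $\mathbb{\tilde{E}}$, since $|\xi^N|^2 = (\xi \wedge N \vee (-N))^2$ is a deterministic function of $\xi$ and hence its sublinear expectation is a supremum over $k_\pm \in K_\pm$ of ordinary integrals against the distribution $F_{W_{k\pm}}$. Writing $g(z) = (z \wedge N \vee (-N))^2$, I would first observe that $g$ is bounded (by $N^2$) and piecewise smooth, so the integral $\int_{\mathbb{R}} g(z)\,dF_{W_{k\pm}}(z)$ is finite for each fixed $N$; this is what rescues us from the infinite untruncated variance. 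The whole computation then reduces to evaluating this integral in closed form and recognizing the result as $N^{2-\alpha} I_{1,N}$.

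First I would split the integral at the truncation points. On $|z| > N$ the integrand is the constant $N^2$, contributing $N^2\,\mathbb{P}_{k\pm}(|\xi| > N)$, which by the explicit cdf \eqref{2.1} equals $N^2$ times $[k_-/\alpha + \beta_{1,k_\pm}(-N) + k_+/\alpha + \beta_{2,k_\pm}(N)]N^{-\alpha}$. On $|z| \le N$ the integrand is $z^2$, and here the cleanest route is integration by parts / the tail-integral identity for second moments, namely
\[
\int_{-N}^{N} z^2\,dF_{W_{k\pm}}(z) = N^2 \mathbb{P}_{k\pm}(|\xi|\le N) - \int_{0}^{N} 2z\,\big[\mathbb{P}_{k\pm}(\xi>z)+\mathbb{P}_{k\pm}(\xi<-z)\big]\,dz + (\text{boundary terms}),
\]
so that when added to the $|z|>N$ contribution the $N^2\mathbb{P}_{k\pm}(|\xi|\le N)$ and $N^2\mathbb{P}_{k\pm}(|\xi|>N)$ pieces combine to a bare $N^2$ minus a tail integral. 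Substituting the tail probabilities $\mathbb{P}_{k\pm}(\xi>z)=[k_+/\alpha+\beta_{2,k_\pm}(z)]z^{-\alpha}$ and the analogous negative-side expression, the $2z \cdot z^{-\alpha} = 2z^{1-\alpha}$ factor produces exactly the $z^{\alpha-1}$ denominators appearing in $I_{1,N}$.

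The remaining work is the change of variables $z = Nw$ (equivalently scaling the integration variable by $N$) to extract the overall $N^{2-\alpha}$ prefactor and to turn the integrals $\int_0^N \beta_{i,k_\pm}(\pm z) z^{1-\alpha}\,dz$ into $N^{2-\alpha}\int_0^1 \beta_{i,k_\pm}(\pm Nw)w^{1-\alpha}\,dw$; the pure-power terms $\int_0^N z^{1-\alpha}\,dz = N^{2-\alpha}/(2-\alpha)$ supply the $\frac{k_-+k_+}{2-\alpha}$ summand, and the boundary evaluations at $z=N$ contribute the $-\beta_{1,k_\pm}(-N)-\beta_{2,k_\pm}(N)$ terms. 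After factoring out $N^{2-\alpha}$ and taking the supremum over $k_\pm$, the bracket is precisely $I_{1,N}$, giving $\mathbb{\tilde{E}}[|\xi^N|^2] = N^{2-\alpha} I_{1,N}$.

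I expect the main obstacle to be nothing conceptual but rather careful bookkeeping of the boundary terms: the mean-zero normalization of $W_{k\pm}$ and the atom-free (continuously differentiable, by (A1)) structure of the cdf must be used to be sure no stray terms at $z=0$ or at $z=\pm N$ are dropped, and one must confirm that the sign conventions make $\beta_{1,k_\pm}$ enter evaluated at negative arguments. Verifying that the supremum over $k_\pm$ commutes with the (finite, for fixed $N$) integral is immediate from \eqref{E^wan}, so the identity holds with the constant $I_{1,N}$ exactly as stated.
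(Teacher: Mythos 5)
Your overall strategy---a tail (layer-cake) representation of the second moment followed by the scaling $z=Nw$---is the same as the paper's, which computes $\mathbb{\tilde{E}}[|\xi|^{2}\mathbf{1}_{\{|\xi|\leq N\}}]$ by Fubini as $\sup_{k_{\pm}}\{\int_{0}^{N}2r(F_{W_{k\pm}}(N)-F_{W_{k\pm}}(r))dr-\int_{-N}^{0}2r(F_{W_{k\pm}}(r)-F_{W_{k\pm}}(-N))dr\}$ and then rescales. However, your bookkeeping fails at exactly the point you flagged as delicate. Integration by parts gives, for each $k_{\pm}$,
\[
\int_{-N}^{N}z^{2}\,dF_{W_{k\pm}}(z)=-N^{2}P_{k_{\pm}}(|\xi|>N)+\int_{0}^{N}2z\,P_{k_{\pm}}(|\xi|>z)\,dz,
\]
so once you add the plateau contribution $N^{2}P_{k_{\pm}}(|\xi|>N)$ from the region $\{|z|>N\}$ (which you explicitly include, consistently with the clamp $\xi^{N}=\xi\wedge N\vee(-N)$), the boundary terms cancel \emph{exactly}, leaving $\int_{0}^{N}2zP_{k_{\pm}}(|\xi|>z)dz$: nothing survives to produce $-\beta_{1,k_{\pm}}(-N)-\beta_{2,k_{\pm}}(N)$. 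Moreover the pure-power part of that integral is $\frac{2(k_{-}+k_{+})}{\alpha}\cdot\frac{N^{2-\alpha}}{2-\alpha}$, not $\frac{k_{-}+k_{+}}{2-\alpha}N^{2-\alpha}$ (note $2/\alpha\neq1$ for $\alpha\in(1,2)$). Carried out correctly, your route yields, per measure, $N^{2-\alpha}\{\frac{2(k_{-}+k_{+})}{\alpha(2-\alpha)}+2\int_{0}^{1}[\beta_{1,k_{\pm}}(-zN)+\beta_{2,k_{\pm}}(zN)]z^{1-\alpha}dz\}$, which exceeds $N^{2-\alpha}$ times the bracket in $I_{1,N}$ by precisely $N^{2}P_{k_{\pm}}(|\xi|>N)=N^{2-\alpha}[\frac{k_{-}+k_{+}}{\alpha}+\beta_{1,k_{\pm}}(-N)+\beta_{2,k_{\pm}}(N)]$. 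So your concluding claim that ``the bracket is precisely $I_{1,N}$'' is false along the steps you describe. (A smaller point: your displayed tail identity also has a sign error---it should read $+\int_{0}^{N}2zP(|\xi|>z)dz$ minus the boundary term, or equivalently $N^{2}P(|\xi|\leq N)-\int_{0}^{N}2zP(|\xi|\leq z)dz$.)

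The way to land on the stated constant is to compute the \emph{indicator} truncation $\mathbb{\tilde{E}}[|\xi|^{2}\mathbf{1}_{\{|\xi|\leq N\}}]$, i.e.\ to drop the $\{|z|>N\}$ plateau contribution entirely; this is what the paper's proof actually evaluates (its Fubini computation never inserts $N^{2}$ on $\{|\xi|>N\}$), and subtracting $N^{2}P_{k_{\pm}}(|\xi|>N)$ from your bracket recovers $I_{1,N}$'s bracket exactly, including both the coefficient $\frac{k_{-}+k_{+}}{2-\alpha}$ and the boundary terms $-\beta_{1,k_{\pm}}(-N)-\beta_{2,k_{\pm}}(N)$. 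You have, in effect, uncovered a statement/proof mismatch in the paper itself: for the clamp $\xi^{N}$ as defined, the identity as stated does not hold, while for the indicator truncation it does; the discrepancy is of the same order $N^{2-\alpha}$ and so is harmless for the subsequent regularity estimates, but it means your proposal, which computes the clamped moment yet claims the indicator-truncation constant, contains a genuine gap that careful execution of your own plan would have exposed.
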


\begin{proof}
Using Fubini's theorem, we obtain
\begin{align*}
&  \mathbb{\tilde{E}}[|\xi|^{2}\mathbf{1}_{\{|\xi|\leq N\}}]=\sup_{k_{\pm}\in
K_{\pm}}\left \{  \int_{\mathbb{R}}\bigg(\int_{0}^{z}2rdr\mathbf{1}_{\{|z|\leq
N\}}\bigg)dF_{W_{k\pm}}(z)\right \} \\
&  =\sup_{k_{\pm}\in K_{\pm}}\left \{  \int_{\mathbb{R}}\bigg(\int_{0}^{\infty
}2r\mathbf{1}_{\{0\leq r<z\}}dr-\int_{-\infty}^{0}2r\mathbf{1}_{\{z\leq
r<0\}}dr\bigg)\mathbf{1}_{\{|z|\leq N\}}dF_{W_{k\pm}}(z)\right \} \\
&  =\sup_{k_{\pm}\in K_{\pm}}\left \{  \int_{0}^{N}2r\bigg(\int_{\mathbb{R}%
}\mathbf{1}_{\{r\leq z\leq N\}}dF_{W_{k\pm}}(z)\bigg)dr-\int_{-N}%
^{0}2r\bigg(\int_{\mathbb{R}}\mathbf{1}_{\{-N\leq z<r\}}dF_{W_{k\pm}%
}(z)\bigg)dr\right \} \\
&  =\sup_{k_{\pm}\in K_{\pm}}\left \{  \int_{0}^{N}2r\left(  F_{W_{k\pm}%
}(N)-F_{W_{k\pm}}(r)\right)  dr-\int_{-N}^{0}2r\left(  F_{W_{k\pm}%
}(r)-F_{W_{k\pm}}(-N)\right)  dr\right \}  .
\end{align*}
By changing variables, it is straightforward to check that%
\begin{align*}
&  \int_{0}^{N}2r\left(  F_{W_{k\pm}}(N)-F_{W_{k\pm}}(r)\right)
dr=N^{2-\alpha}\bigg(\frac{k_{+}}{2-\alpha}+2\int_{0}^{1}\frac{\beta
_{2,k_{\pm}}(zN)}{z^{\alpha-1}}dz-\beta_{2,k_{\pm}}(N)\bigg),\\
&  \int_{-N}^{0}2r\left(  F_{W_{k\pm}}(r)-F_{W_{k\pm}}(-N)\right)
dr=N^{2-\alpha}\bigg(\frac{k_{-}}{2-\alpha}+2\int_{0}^{1}\frac{\beta
_{1,k_{\pm}}(-zN)}{z^{\alpha-1}}dz-\beta_{1,k_{\pm}}(-N)\bigg),
\end{align*}
which immediately implies the result.
\end{proof}

\begin{lemma}
\label{truncation error}For each fixed $N>0$, we have
\[
\mathbb{\tilde{E}}[|\xi-\xi^{N}|]=N^{1-\alpha}I_{2,N},
\]
where%
\[
I_{2,N}:=\sup_{k_{\pm}\in K_{\pm}}\left \{  \frac{k_{-}+k_{+}}{\alpha-1}%
+\beta_{1,k_{\pm}}(-N)+\beta_{2,k_{\pm}}(N)+\int_{1}^{+\infty}\frac
{\beta_{1,k_{\pm}}(-zN)+\beta_{2,k_{\pm}}(zN)}{z^{\alpha}}dz\right \}  .
\]

\end{lemma}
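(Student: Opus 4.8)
The plan is to mirror the Fubini computation used in Lemma \ref{truncted_moment estimate}. First I would record the pointwise identity
\[
|\xi-\xi^{N}|(z)=(|z|-N)^{+},
\]
which holds because $\xi^{N}=(\xi \wedge N)\vee(-N)$ leaves $\xi$ unchanged on $[-N,N]$ and clips it to $\pm N$ outside, so that $\xi-\xi^{N}$ vanishes on $[-N,N]$ and equals $z-N$ (resp. $z+N$) for $z>N$ (resp. $z<-N$). Then, using the layer-cake representation $(|z|-N)^{+}=\int_{N}^{\infty}\mathbf{1}_{\{|z|>r\}}dr$ together with Fubini's theorem (legitimate since $\mathbb{\tilde{E}}[|\xi|]<\infty$ was established after Example \ref{exp2}), I would write, for each fixed $k_{\pm}\in K_{\pm}$,
\[
\int_{\mathbb{R}}(|z|-N)^{+}dF_{W_{k\pm}}(z)=\int_{N}^{\infty}\bigg(\int_{\mathbb{R}}\mathbf{1}_{\{|z|>r\}}dF_{W_{k\pm}}(z)\bigg)dr=\int_{N}^{\infty}P_{k_{\pm}}(|\xi|>r)\,dr.
\]

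The next step is to read the tail probability off the cdf (\ref{2.1}). Splitting into the two half-lines gives, for $r>0$,
\[
P_{k_{\pm}}(|\xi|>r)=\big(1-F_{W_{k\pm}}(r)\big)+F_{W_{k\pm}}(-r)=\frac{k_{-}+k_{+}}{\alpha}r^{-\alpha}+\big[\beta_{1,k_{\pm}}(-r)+\beta_{2,k_{\pm}}(r)\big]r^{-\alpha}.
\]
I would then evaluate the power term exactly, $\int_{N}^{\infty}r^{-\alpha}dr=N^{1-\alpha}/(\alpha-1)$, which is finite precisely because $\alpha>1$, and treat the $\beta$-term by the substitution $r=zN$, which extracts a factor $N^{1-\alpha}$ and produces $\int_{1}^{\infty}z^{-\alpha}[\beta_{1,k_{\pm}}(-zN)+\beta_{2,k_{\pm}}(zN)]\,dz$. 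Collecting the two contributions yields
\[
\int_{\mathbb{R}}(|z|-N)^{+}dF_{W_{k\pm}}(z)=N^{1-\alpha}\bigg[\frac{k_{-}+k_{+}}{\alpha(\alpha-1)}+\int_{1}^{\infty}\frac{\beta_{1,k_{\pm}}(-zN)+\beta_{2,k_{\pm}}(zN)}{z^{\alpha}}\,dz\bigg].
\]

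Finally, since $N^{1-\alpha}$ is a positive constant for fixed $N$, it factors out of the supremum over $k_{\pm}\in K_{\pm}$, giving $\mathbb{\tilde{E}}[|\xi-\xi^{N}|]=N^{1-\alpha}I_{2,N}$ with $I_{2,N}$ exactly as stated. I do not anticipate a genuine obstacle: this is a direct tail-integral computation. The only points demanding care are the correct bookkeeping of the left and right tails when assembling $P_{k_{\pm}}(|\xi|>r)$, and confirming finiteness of the $\beta$-integral so that both the Fubini interchange and the factoring-out of the supremum are justified; the latter follows from the continuity in (A1) and the decay $\beta_{1,k_{\pm}}(z),\beta_{2,k_{\pm}}(z)\to0$ at infinity, which make $z^{-\alpha}$ integrable against the $\beta$-terms on $[1,\infty)$.
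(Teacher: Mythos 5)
Your proof is correct and takes essentially the same route as the paper: both apply Fubini's theorem to reduce $\mathbb{\tilde{E}}[|\xi-\xi^{N}|]$ to the tail integral $\sup_{k_{\pm}\in K_{\pm}}\int_{N}^{\infty}\bigl(1-F_{W_{k\pm}}(r)+F_{W_{k\pm}}(-r)\bigr)dr$, and then evaluate it from the cdf (\ref{2.1}) with the substitution $r=zN$. Your direct layer-cake identity $(|z|-N)^{+}=\int_{N}^{\infty}\mathbf{1}_{\{|z|>r\}}dr$ merely streamlines the paper's two-step computation in (\ref{3.6})--(\ref{3.7}), where the $N\int_{\mathbb{R}}\mathbf{1}_{\{|z|>N\}}dF_{W_{k\pm}}(z)$ terms are introduced and then cancel.
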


\begin{proof}
Notice that
\begin{equation}
\mathbb{\tilde{E}}[|\xi-\xi^{N}|]=\mathbb{\tilde{E}}\big[|\xi|\mathbf{1}%
_{\{|\xi|>N\}}\big]=\sup_{k_{\pm}\in K_{\pm}}\left \{  \int_{\mathbb{R}%
}|z|\mathbf{1}_{\{|z|>N\}}dF_{W_{k\pm}}(z)\right \}  . \label{3.6}%
\end{equation}
Observe by Fubini's theorem that
\begin{equation}%
\begin{split}
\int_{\mathbb{R}}|z|\mathbf{1}_{\{|z|>N\}}dF_{W_{k\pm}}(z)  &  =\int
_{0}^{\infty}\int_{\mathbb{R}}\mathbf{1}_{\{0\leq r<|z|\}}\mathbf{1}%
_{\{|z|>N\}}dF_{W_{k\pm}}(z)dr\\
&  =\int_{N}^{\infty}\int_{\mathbb{R}}\mathbf{1}_{\{|z|>r\}}dF_{W_{k\pm}%
}(z)dr+N\int_{\mathbb{R}}\mathbf{1}_{\{|z|>N\}}dF_{W_{k\pm}}(z)\\
&  =\int_{N}^{\infty}\left(  1-F_{W_{k\pm}}(r)+F_{W_{k\pm}}(-r)\right)
dr+N\left(  1-F_{W_{k\pm}}(N)+F_{W_{k\pm}}(-N)\right)  .
\end{split}
\label{3.7}%
\end{equation}
Together with (\ref{3.6}) and (\ref{3.7}), we obtain that
\[
\mathbb{\tilde{E}}[|\xi-\xi^{N}|]=\sup_{k_{\pm}\in K_{\pm}}\left \{
\frac{k_{-}+k_{+}}{\alpha-1}N^{1-\alpha}+N^{1-\alpha}\left(  \beta_{1,k_{\pm}%
}(-N)+\beta_{2,k_{\pm}}(N)\right)  +\int_{N}^{\infty}\frac{\beta_{1,k_{\pm}%
}(-r)+\beta_{2,k_{\pm}}(r)}{r^{\alpha}}dr\right \}  .
\]
By changing variables, we immediately conclude the proof.
\end{proof}

\begin{lemma}
\label{u_N regularity}Suppose that $\phi \in C_{b,Lip}(\mathbb{R})$. Then,

\begin{description}
\item[(i)] for any $k\in \mathbb{N}$ such that $k\Delta \leq T$ and
$x,y\in \mathbb{R}$,
\[
\left \vert u_{\Delta,N}(k\Delta,x)-u_{\Delta,N}(k\Delta,y)\right \vert \leq
C_{\phi}|x-y|;
\]

\item[(ii)] for any $k\in \mathbb{N}$ such that $k\Delta \leq T$ and
$x\in \mathbb{R}$,
\[
\left \vert u_{\Delta,N}(k\Delta,x)-u_{\Delta,N}(0,x)\right \vert \leq C_{\phi
}\big(  (I_{1,N})^{\frac{1}{2}}N^{\frac{2-\alpha}{2}}\Delta^{\frac{2-\alpha
}{2\alpha}}+I_{2,N}N^{1-\alpha}\Delta^{\frac{1-\alpha}{\alpha}}\big)
(k\Delta)^{\frac{1}{2}},
\]

\end{description}

where $C_{\phi}$ is the Lipschitz constant of $\phi$, and $I_{1,N}$, $I_{2,N}$
are given in Lemmas \ref{truncted_moment estimate} and \ref{truncation error}, respectively.
\end{lemma}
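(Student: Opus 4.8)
The plan is to handle the two parts separately: the spatial estimate (i) follows from a one-line induction, while the time estimate (ii) is where the real work lies, since the diffusive scaling $(k\Delta)^{1/2}$ must be extracted from the finite second moment of the truncated variable $\xi^N$ supplied by Lemma \ref{truncted_moment estimate}. For part (i) I would induct on $k$. The base case $k=0$ is immediate because $u_{\Delta,N}(0,\cdot)=\phi$ is $C_\phi$-Lipschitz. For the inductive step, writing \eqref{3.1} as $u_{\Delta,N}(k\Delta,x)=\mathbb{\tilde{E}}[u_{\Delta,N}((k-1)\Delta,x+\Delta^{1/\alpha}\xi^N)]$ and applying Proposition \ref{Prop^E Plimi}(ii),
\[
|u_{\Delta,N}(k\Delta,x)-u_{\Delta,N}(k\Delta,y)|\le\mathbb{\tilde{E}}\big[|u_{\Delta,N}((k-1)\Delta,x+\Delta^{1/\alpha}\xi^N)-u_{\Delta,N}((k-1)\Delta,y+\Delta^{1/\alpha}\xi^N)|\big]\le C_\phi|x-y|,
\]
the last inequality using the inductive hypothesis together with constant preservation. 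Thus the scheme propagates the Lipschitz constant unchanged.

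For part (ii), I would first iterate \eqref{3.1} and invoke the independence structure (the tower rule in the definition of independence) to obtain the random-walk representation $u_{\Delta,N}(k\Delta,x)=\mathbb{\tilde{E}}[\phi(x+\Delta^{1/\alpha}\sum_{i=1}^{k}\xi_i^N)]$, where the $\xi_i^N$ are i.i.d.\ truncated copies of $\xi^N$. Since $u_{\Delta,N}(0,x)=\phi(x)$ and $\phi$ is $C_\phi$-Lipschitz, Proposition \ref{Prop^E Plimi}(ii) reduces the claim to estimating $\mathbb{\tilde{E}}[|\sum_{i=1}^{k}\xi_i^N|]$, and by Proposition \ref{Prop^E Plimi}(iii) with $p=q=2$ (taking the second factor to be $1$) this is bounded by $(\mathbb{\tilde{E}}[|\sum_{i=1}^{k}\xi_i^N|^2])^{1/2}$. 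The heart of the matter is then a recursive bound for the second moment: expanding $|\sum_{i=1}^{k}\xi_i^N|^2=|\sum_{i=1}^{k-1}\xi_i^N|^2+2\xi_k^N\sum_{i=1}^{k-1}\xi_i^N+|\xi_k^N|^2$ and using only sub-additivity, the diagonal contributes $\mathbb{\tilde{E}}[|\xi_k^N|^2]=N^{2-\alpha}I_{1,N}$ at each step, accumulating to $N^{2-\alpha}I_{1,N}\,k$. Inserting the prefactor $\Delta^{1/\alpha}$ and regrouping via $\Delta^{1/\alpha}k^{1/2}=\Delta^{(2-\alpha)/(2\alpha)}(k\Delta)^{1/2}$ yields exactly the first, diffusive, term.

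The main obstacle is the cross term $\mathbb{\tilde{E}}[2\xi_k^N\sum_{i<k}\xi_i^N]$: because truncation destroys the mean-zero property enjoyed by $\xi$, the martingale-type cancellation available in the untruncated case is lost and must be replaced by a quantitative drift estimate. To control it I would freeze $\sum_{i<k}\xi_i^N=y$ using independence and evaluate $\mathbb{\tilde{E}}[2y\xi_k^N]$, which by positive homogeneity equals $2y\,\mathbb{\tilde{E}}[\xi^N]$ when $y\ge0$ and $-2y\,\mathbb{\tilde{E}}[-\xi^N]$ when $y<0$, hence is bounded by $2|y|\max\{|\mathbb{\tilde{E}}[\xi^N]|,|\mathbb{\tilde{E}}[-\xi^N]|\}$. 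This maximum is precisely the drift created by truncation, and since $\mathbb{\tilde{E}}[\xi]=\mathbb{\tilde{E}}[-\xi]=0$, Proposition \ref{Prop^E Plimi}(ii) together with Lemma \ref{truncation error} bounds it by $\mathbb{\tilde{E}}[|\xi-\xi^N|]=N^{1-\alpha}I_{2,N}$. Feeding this back, the cross terms are controlled in terms of $\mathbb{\tilde{E}}[|\sum_{i<k}\xi_i^N|]$ from the previous step, and after the parallel regrouping of the $\Delta$- and $k$-powers (using $k\Delta\le T$ to express the accumulated drift against the factor $\Delta^{(1-\alpha)/\alpha}(k\Delta)^{1/2}$) one obtains the second term $I_{2,N}N^{1-\alpha}\Delta^{(1-\alpha)/\alpha}(k\Delta)^{1/2}$. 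The delicate point throughout is that only sub-additivity, positive homogeneity and the independence tower rule are available—linearity of the expectation cannot be invoked when expanding the square—so the recursion must be arranged term by term with the correct signs, and the careful bookkeeping that keeps the truncation-induced drift from degrading the $(k\Delta)^{1/2}$ scaling is exactly where the estimate is most sensitive.
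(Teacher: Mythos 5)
Your proposal is correct, but it takes a genuinely different route from the paper. The paper proves (ii) by Krylov's quadratic-penalty device: it converts the Lipschitz bound (i) via Young's inequality into a two-point estimate $u_{\Delta,N}(k\Delta,x)\leq u_{\Delta,N}(k\Delta,y)+A|x-y|^{2}+B$, propagates the inductive inequality (\ref{3.3}) through the scheme by writing $\mathbb{\tilde{E}}[\cdot]=\sup_{k_{\pm}}E_{P_{k_{\pm}}}[\cdot]$ and working under each \emph{linear} expectation $E_{P_{k_{\pm}}}$ — centering $\xi^{N}$ around its classical mean to harvest the variance increment $M_{N}^{2}\Delta^{2/\alpha}$, and using $E_{P_{k_{\pm}}}[\xi]=0$ to bound the truncation drift $|E_{P_{k_{\pm}}}[\xi^{N}]|\leq D_{N}$ — and finally optimizes over the Young parameter $\varepsilon$. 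You instead stay entirely within the sublinear calculus: the random-walk representation $u_{\Delta,N}(k\Delta,x)=\mathbb{\tilde{E}}[\phi(x+\Delta^{1/\alpha}\sum_{i=1}^{k}\xi_{i}^{N})]$ (which the paper itself establishes and uses in Lemma \ref{num-truncated}), Cauchy--Schwarz, and a second-moment recursion $m_{k}\leq m_{k-1}+2D_{N}\sqrt{m_{k-1}}+M_{N}^{2}$, whose cross term you control by the independence tower rule plus positive homogeneity plus the same drift bound $D_{N}$ (note that $S_{k-1}^{N}$ is bounded by $(k-1)N$, so the product test function is genuinely Lipschitz and the tower rule applies). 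Your recursion does close: induction gives $\sqrt{m_{k}}\leq\sqrt{k}\,M_{N}+kD_{N}$, since $(\sqrt{m_{k-1}}+D_{N})^{2}+M_{N}^{2}\leq(\sqrt{k}M_{N}+kD_{N})^{2}$, and the regrouping of powers then reproduces the stated bound. Both proofs hinge on exactly the same two quantitative inputs, $M_{N}^{2}=N^{2-\alpha}I_{1,N}$ and $D_{N}=N^{1-\alpha}I_{2,N}$, and both share the final step of replacing the accumulated drift factor $k\Delta$ by $(k\Delta)^{1/2}$, which strictly requires $k\Delta\leq1$ (true in the application with $T=1$; otherwise either proof picks up a harmless factor $T^{1/2}$) — the paper's own proof glosses this identically, so it is not a defect of yours specifically. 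What each route buys: your argument uses only the sublinear-expectation axioms and the i.i.d.\ structure, never the representation of $\mathbb{\tilde{E}}$ as a supremum of linear expectations, so it is in that respect more intrinsic; the paper's Krylov-style argument works directly on the scheme without needing a random-walk representation, which is why it transfers to more general monotone schemes, and its intermediate two-point estimate (\ref{3.3}) is reusable machinery in that literature.
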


\begin{proof}
Assertion (i) is proved by induction using (\ref{3.1}). Clearly, the estimate
holds for $k=0$. In general, we assume the assertion holds for some
$k\in \mathbb{N}$ with $k\Delta \leq T$. Then, using Proposition
\ref{Prop^E Plimi}, we have
\begin{align*}
\big \vert u_{\Delta,N}((k+1)\Delta,x)-u_{\Delta,N}((k+1)\Delta,y)\big \vert
&  =\big \vert \mathbb{\tilde{E}}[u_{\Delta,N}(k\Delta,x+\Delta^{\frac
{1}{\alpha}}\xi^{N})]-\mathbb{\tilde{E}}[u_{\Delta,N}(k\Delta,y+\Delta
^{\frac{1}{\alpha}}\xi^{N})]\big \vert \\
&  \leq \mathbb{\tilde{E}}\big[\big \vert u_{\Delta,N}(k\Delta,x+\Delta
^{\frac{1}{\alpha}}\xi^{N})-u_{\Delta,N}(k\Delta,y+\Delta^{\frac{1}{\alpha}%
}\xi^{N})\big \vert \big]\\
&  \leq C_{\phi}|x-y|.
\end{align*}
By the principle of induction the assertion is true for all $k\in \mathbb{N}$
with $k\Delta \leq T$.

Now we establish the time regularity for $u_{\Delta,N}$ in (ii). Note that
Young's inequality implies that for any $x,y>0$, $xy\leq \frac{1}{2}%
(x^{2}+y^{2})$. For any $\varepsilon>0$, let $x=|x-y|$ and $y=\frac
{1}{\varepsilon}$, then it follows from (i) that
\[
u_{\Delta,N}(k\Delta,x)\leq u_{\Delta,N}(k\Delta,y)+A|x-y|^{2}+B,
\]
where $A=\frac{\varepsilon}{2}C_{\phi}$ and $B=\frac{1}{2\varepsilon}C_{\phi}
$.

We claim that, for any $k\in \mathbb{N}$ such that $k\Delta \leq T$ and
$x,y\in \mathbb{R}$, it holds that
\begin{equation}
u_{\Delta,N}(k\Delta,x)\leq u_{\Delta,N}(0,y)+A|x-y|^{2}+AM_{N}^{2}%
k\Delta^{\frac{2}{\alpha}}+C_{\phi}D_{N}k\Delta^{\frac{1}{\alpha}}+B,
\label{3.3}%
\end{equation}
where $M_{N}^{2}=\mathbb{\tilde{E}}[|\xi^{N}|^{2}]$ and $D_{N}=\mathbb{\tilde
{E}}[|\xi-\xi^{N}|]$. Indeed, (\ref{3.3}) obviously holds for $k=0$. Assume
that for some $k\in \mathbb{N}$ the assertion (\ref{3.3}) holds. Notice that
\begin{equation}
u_{\Delta,N}((k+1)\Delta,x)=\mathbb{\tilde{E}}[u_{\Delta,N}(k\Delta
,x+\Delta^{\frac{1}{\alpha}}\xi^{N})]=\sup_{k_{\pm}\in K_{\pm}}E_{P_{k_{\pm}}
}\big[u_{\Delta,N}(k\Delta,x+\Delta^{\frac{1}{\alpha}}\xi^{N})\big].
\label{3.4}%
\end{equation}
Then, for any $k_{\pm}\in K_{\pm}$,
\begin{equation}%
\begin{split}
E_{P_{k_{\pm}}}[u_{\Delta,N}(k\Delta,x+\Delta^{\frac{1}{\alpha}}\xi^{N})]  &
\leq u_{\Delta,N}(0,y+\Delta^{\frac{1}{\alpha}}E_{P_{k_{\pm}}}[\xi
^{N}])+AM_{N}^{2}k\Delta^{\frac{2}{\alpha}}+C_{\phi}D_{N}k\Delta^{\frac
{1}{\alpha}}\\
&  \text{ \  \ }+B+AE_{P_{k_{\pm}}}\big[\big|x-y+\Delta^{\frac{1}{\alpha}%
}\big(\xi^{N}-E_{P_{k_{\pm}}}[\xi^{N}]\big)\big|^{2}\big].
\end{split}
\end{equation}
Seeing that, $E_{P_{k_{\pm}}}\big[\xi^{N}-E_{P_{k_{\pm}}}[\xi^{N}]\big]=0$
and
\[
E_{P_{k_{\pm}}}\big[\big(\xi^{N}-E_{P_{k_{\pm}}}[\xi^{N}]\big)^{2}%
\big]=E_{P_{k_{\pm}}} \big[(\xi^{N})^{2}\big]-\big(E_{P_{k_{\pm}}}[\xi
^{N}]\big)^{2}\leq \mathbb{\tilde{E}}\big[|\xi^{N}|^{2}\big],
\]
we can deduce that
\begin{equation}
E_{P_{k_{\pm}}}\big[\big|x-y+\Delta^{\frac{1}{\alpha}}\big(\xi^{N}%
-E_{P_{k_{\pm}}}[\xi^{N}]\big)\big|^{2}\big]\leq|x-y|^{2}+M_{N}^{2}%
\Delta^{\frac{2}{\alpha}}.
\end{equation}
Also, since $E_{P_{k_{\pm}}}[\xi]=0$, it follows from (i) that
\begin{equation}
u_{\Delta,N}(0,y+\Delta^{\frac{1}{\alpha}}E_{P_{k_{\pm}}}[\xi^{N}%
])=u_{\Delta,N}(0,y+\Delta^{\frac{1}{\alpha}}E_{P_{k_{\pm}}}[\xi^{N}-\xi])\leq
u_{\Delta,N}(0,y)+C_{\phi}D_{N}\Delta^{\frac{1}{\alpha}}. \label{3.5}%
\end{equation}
Combining (\ref{3.4})-(\ref{3.5}), we obtain that
\[
u_{\Delta,N}((k+1)\Delta,x)\leq u_{\Delta,N}(0,y)+A|x-y|^{2}+AM_{N}%
^{2}(k+1)\Delta^{\frac{2}{\alpha}}+C_{\phi}D_{N}(k+1)\Delta^{\frac{1}{\alpha}%
}+B,
\]
which shows that (\ref{3.3}) also holds for $k+1$. By the principle of
induction our claim is true for all $k\in \mathbb{N}$ such that $k\Delta \leq T$
and $x,y\in \mathbb{R}$. By taking $y=x$ in (\ref{3.3}), we have for any
$\varepsilon>0$,%
\[
u_{\Delta,N}(k\Delta,x)\leq u_{\Delta,N}(0,x)+\frac{\varepsilon}{2}C_{\phi
}M_{N}^{2}k\Delta^{\frac{2}{\alpha}}+C_{\phi}D_{N}k\Delta^{\frac{1}{\alpha}%
}+\frac{1}{2\varepsilon}C_{\phi}.
\]
By minimizing of the right-hand side with respect to $\varepsilon$, we obtain
that%
\begin{align*}
u_{\Delta,N}(k\Delta,x)  &  \leq u_{\Delta,N}(0,x)+C_{\phi}(M_{N}^{2}%
)^{\frac{1}{2}}\Delta^{\frac{2-\alpha}{2\alpha}}(k\Delta)^{\frac{1}{2}%
}+C_{\phi}D_{N}\Delta^{\frac{1-\alpha}{\alpha}}(k\Delta)\\
&  \leq u_{\Delta,N}(0,x)+C_{\phi}\big((M_{N}^{2})^{\frac{1}{2}}\Delta
^{\frac{2-\alpha}{2\alpha}}+D_{N}\Delta^{\frac{1-\alpha}{\alpha}}%
\big)(k\Delta)^{\frac{1}{2}}.
\end{align*}
Similarly, we also have%
\[
u_{\Delta,N}(0,x)\leq u_{\Delta,N}(k\Delta,x)+C_{\phi}\big((M_{N}^{2}%
)^{\frac{1}{2}}\Delta^{\frac{2-\alpha}{2\alpha}}+D_{N}\Delta^{\frac{1-\alpha
}{\alpha}}\big)(k\Delta)^{\frac{1}{2}}.
\]
Combining with Lemmas \ref{truncted_moment estimate}-\ref{truncation error},
we obtain our desired result (ii).
\end{proof}

\begin{lemma}
\label{num-truncated}Suppose that $\phi \in C_{b,Lip}(\mathbb{R})$ and fixed
$N>0$. Then, for any $k\in \mathbb{N}$ such that $k\Delta \leq T$ and
$x\in \mathbb{R}$,
\[
\left \vert u_{\Delta}(k\Delta,x)-u_{\Delta,N}(k\Delta,x)\right \vert \leq
C_{\phi}I_{2,N}N^{1-\alpha}\Delta^{\frac{1-\alpha}{\alpha}}k\Delta,
\]
where $C_{\phi}$ is the Lipschitz constant of $\phi$ and $I_{2,N}$ is given in
Lemma \ref{truncation error}.
\end{lemma}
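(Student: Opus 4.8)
The plan is to argue by induction on $k$, exploiting the contraction property of the sublinear expectation together with the spatial Lipschitz regularity of the truncated scheme and the sharp truncation estimate of Lemma \ref{truncation error}. Write $g(k):=C_{\phi}I_{2,N}N^{1-\alpha}\Delta^{\frac{1-\alpha}{\alpha}}k\Delta$ for the claimed bound. The base case $k=0$ is immediate, since $u_{\Delta}(0,x)=\phi(x)=u_{\Delta,N}(0,x)$, so the difference vanishes and $g(0)=0$.

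For the inductive step, suppose that $|u_{\Delta}(k\Delta,y)-u_{\Delta,N}(k\Delta,y)|\leq g(k)$ holds uniformly in $y\in\mathbb{R}$. Unfolding both recursions at time $(k+1)\Delta$ and using Proposition \ref{Prop^E Plimi}(ii), I would first bound
\[
|u_{\Delta}((k+1)\Delta,x)-u_{\Delta,N}((k+1)\Delta,x)| \leq \mathbb{\tilde{E}}\big[\,|u_{\Delta}(k\Delta,x+\Delta^{1/\alpha}\xi)-u_{\Delta,N}(k\Delta,x+\Delta^{1/\alpha}\xi^{N})|\,\big].
\]
The integrand is then split by the triangle inequality into a scheme-index difference $|u_{\Delta}(k\Delta,x+\Delta^{1/\alpha}\xi)-u_{\Delta,N}(k\Delta,x+\Delta^{1/\alpha}\xi)|$ and an argument difference $|u_{\Delta,N}(k\Delta,x+\Delta^{1/\alpha}\xi)-u_{\Delta,N}(k\Delta,x+\Delta^{1/\alpha}\xi^{N})|$. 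The first is controlled, uniformly in the realization of $\xi$, by the induction hypothesis $g(k)$, a deterministic constant; the second is bounded, again for every realization of $\xi$, via the spatial Lipschitz estimate of Lemma \ref{u_N regularity}(i) by $C_{\phi}\Delta^{1/\alpha}|\xi-\xi^{N}|$.

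Taking $\mathbb{\tilde{E}}$ and invoking sub-additivity, monotonicity, constant preservation and positive homogeneity, the first contribution remains $g(k)$, while the second becomes $C_{\phi}\Delta^{1/\alpha}\mathbb{\tilde{E}}[|\xi-\xi^{N}|]=C_{\phi}\Delta^{1/\alpha}N^{1-\alpha}I_{2,N}$ by Lemma \ref{truncation error}. The only point requiring care is the bookkeeping of exponents: since $\frac{1-\alpha}{\alpha}+1=\frac{1}{\alpha}$, this second contribution equals exactly $g(k+1)-g(k)$, so that $g(k)+C_{\phi}\Delta^{1/\alpha}N^{1-\alpha}I_{2,N}=g(k+1)$ and the induction closes. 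I do not anticipate a genuine obstacle here; the argument is a single-step contraction estimate, and the entire substance lies in the sharp truncation bound $\mathbb{\tilde{E}}[|\xi-\xi^{N}|]=N^{1-\alpha}I_{2,N}$ established earlier, which is precisely what forces the accumulated error over the $k$ steps to grow only linearly in $k\Delta$.
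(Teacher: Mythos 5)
Your proof is correct, but it takes a genuinely different route from the paper. The paper does not induct on the error directly: it invokes the representation formula (via the induction argument of Theorem 2.1 in \cite{HL2020}) to write $u_{\Delta}(k\Delta,x)=\mathbb{\tilde{E}}\big[\phi\big(x+\Delta^{1/\alpha}\sum_{i=1}^{k}\xi_{i}\big)\big]$ and $u_{\Delta,N}(k\Delta,x)=\mathbb{\tilde{E}}\big[\phi\big(x+\Delta^{1/\alpha}\sum_{i=1}^{k}\xi_{i}^{N}\big)\big]$ for an i.i.d.\ sequence $\{\xi_i\}$ under $\mathbb{\tilde{E}}$, and then applies the Lipschitz bound on $\phi$ together with $\big|\sum_{i=1}^k\xi_i-\sum_{i=1}^k\xi_i^N\big|\leq\sum_{i=1}^k|\xi_i-\xi_i^N|$ and sub-additivity to get the factor $k\,\mathbb{\tilde{E}}[|\xi_1-\xi_1^N|]$ in one line. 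You instead run a one-step contraction induction on the recursions (\ref{2.2}) and (\ref{3.1}): the splitting into a scheme-index difference (controlled by the induction hypothesis, a constant) and an argument difference (controlled pointwise by the spatial Lipschitz estimate of Lemma \ref{u_N regularity}(i) as $C_{\phi}\Delta^{1/\alpha}|\xi-\xi^{N}|$), followed by sub-additivity, monotonicity and Lemma \ref{truncation error}, is sound, and your exponent bookkeeping $\Delta^{\frac{1-\alpha}{\alpha}+1}=\Delta^{\frac{1}{\alpha}}$ is exactly right. Your route is more elementary and self-contained, avoiding the construction of the i.i.d.\ sequence and the citation of the representation theorem; the paper's route is shorter here and reuses machinery it needs anyway (the same representation appears in Step 2 of the proof of Theorem \ref{u_num_regularity} and in the derivation of the central limit theorem rate), so it comes essentially for free in context. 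One small point worth making explicit in your version: to apply Proposition \ref{Prop^E Plimi}(ii) and take $\mathbb{\tilde{E}}$ of the composed quantities, the integrands must lie in $C_{Lip}(\mathbb{R})$, which holds because $u_{\Delta}(k\Delta,\cdot)$ and $u_{\Delta,N}(k\Delta,\cdot)$ are uniformly Lipschitz (your Lemma \ref{u_N regularity}(i) and its analogue for $u_{\Delta}$) and $z\mapsto z\wedge N\vee(-N)$ is Lipschitz; since you invoke the spatial Lipschitz estimate anyway, this is a remark rather than a gap.
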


\begin{proof}
Let $( \xi_{i})_{i\geq1}$ be a sequence of random variables on $(\mathbb{R}%
,C_{Lip}(\mathbb{R}),\mathbb{\tilde{E}})$ such that $\xi_{1}=\xi$, $\xi_{i+1}%
$\ $\overset{d}{=}\xi_{i}$ and $\xi_{i+1}\perp(\xi_{1},\xi_{2},\ldots,\xi
_{i})$ for each $i\in \mathbb{N}$, and let $\xi_{i}^{N}=\xi_{i}\wedge
N\vee(-N)$ for each $i\in \mathbb{N}$. In view of (\ref{2.2}) and (\ref{3.1}),
by using induction method of Theorem 2.1 in \cite{HL2020}, we have for any
$k\in \mathbb{N}$ such that $k\Delta \leq T$ and $x\in \mathbb{R}$,%
\begin{align*}
&  u_{\Delta}(k\Delta,x)=\mathbb{\tilde{E}}[\phi(x+\Delta^{\frac{1}{\alpha}%
}\sum \limits_{i=1}^{k}\xi_{i})],\\
&  u_{\Delta,N}(k\Delta,x)=\mathbb{\tilde{E}}[\phi(x+\Delta^{\frac{1}{\alpha}%
}\sum \limits_{i=1}^{k}\xi_{i}^{N})].
\end{align*}
Then, it follows from the Lipschitz condition of $\phi$\ and Lemma
\ref{truncation error} that%
\[
\left \vert u_{\Delta}(k\Delta,x)-u_{\Delta,N}(k\Delta,x)\right \vert \leq
C_{\phi}\Delta^{\frac{1}{\alpha}}k\mathbb{\tilde{E}}[|\xi_{1}-\xi_{1}%
^{N}|]\leq C_{\phi}I_{2,N}N^{1-\alpha}\Delta^{\frac{1-\alpha}{\alpha}}%
k\Delta,
\]
which we conclude the proof.
\end{proof}

Now we start to prove the regularity results of $u_{\Delta}$.

\begin{proof}
[Proof of Theorem \ref{u_num_regularity}]The space regularity of $u_{\Delta}$
can be proved by induction using (\ref{2.2}). We only focus on the time
regularity of $u_{\Delta}$ and divide its proof into three steps.

Step 1. Consider the special case $\left \vert u_{\Delta}(k\Delta
,\cdot)-u_{\Delta}(0,\cdot)\right \vert $ for any $k\in \mathbb{N}$ such that
$k\Delta \leq T$. Noting that $u_{\Delta,N}(0,x)=u_{\Delta}(0,x)=\phi(x)$, we
have\
\[
\left \vert u_{\Delta}(k\Delta,x)-u_{\Delta}(0,x)\right \vert \leq \left \vert
u_{\Delta}(k\Delta,x)-u_{\Delta,N}(k\Delta,x)\right \vert +\left \vert
u_{\Delta,N}(k\Delta,x)-u_{\Delta,N}(0,x)\right \vert .
\]
In view of Lemmas \ref{u_N regularity} and \ref{num-truncated}, by choosing
$N=\Delta^{-\frac{1}{\alpha}}$, we obtain
\begin{equation}%
\begin{split}
\left \vert u_{\Delta}(k\Delta,x)-u_{\Delta}(0,x)\right \vert  &  \leq C_{\phi
}((I_{1,N})^{\frac{1}{2}}N^{\frac{2-\alpha}{2}}\Delta^{\frac{2-\alpha}%
{2\alpha}}+2I_{2,N}N^{1-\alpha}\Delta^{\frac{1-\alpha}{\alpha}})(k\Delta
)^{\frac{1}{2}}\\
&
\leq C_{\phi}\big((I_{1,\Delta})^{\frac{1}{2}}+2I_{2,\Delta}\big)
(k\Delta)^{\frac{1}{2}},
\end{split}
\label{3.8}%
\end{equation}
where%
\begin{align*}
I_{1,\Delta}  &  =\sup \limits_{k_{\pm}\in K_{\pm}}\bigg \{ \frac{k_{-}+k_{+}%
}{2-\alpha}+2\int_{0}^{1}\frac{|\beta_{1,k_{\pm}}(-\Delta^{-\frac{1}{\alpha}%
}z)|+|\beta_{2,k_{\pm}}(\Delta^{-\frac{1}{\alpha}}z)|}{z^{\alpha-1}}%
dz+|\beta_{1,k_{\pm}}(-\Delta^{-\frac{1}{\alpha}})|+|\beta_{2,k_{\pm}}%
(\Delta^{-\frac{1}{\alpha}})|\bigg \},\\
I_{2,\Delta}  &  =\sup \limits_{k_{\pm}\in K_{\pm}}\bigg \{ \frac{k_{-}+k_{+}%
}{\alpha-1}+\int_{1}^{\infty}\frac{|\beta_{1,k_{\pm}}(-\Delta^{-\frac
{1}{\alpha}}z)|+|\beta_{2,k_{\pm}}(\Delta^{-\frac{1}{\alpha}}z)|}{z^{\alpha}%
}dz+|\beta_{1,k_{\pm}}(-\Delta^{-\frac{1}{\alpha}})|+|\beta_{2,k_{\pm}}%
(\Delta^{-\frac{1}{\alpha}})|\bigg \}.
\end{align*}
In addition, by Assumption (A1), it is easy to obtain that $I_{1,\Delta}$ and
$I_{2,\Delta}$ are finite as $\Delta \rightarrow0$.

Step 2. Let us turn to the case $\left \vert u_{\Delta}(k\Delta,\cdot
)-u_{\Delta}(l\Delta,\cdot)\right \vert $ for any $k,l\in \mathbb{N}$ such that
$(k\vee l)\Delta \leq T$. Without loss of generality, we assume $k\geq l$. Let
$( \xi_{i})_{i=1}^{\infty}$ be a sequence of random variables on
$(\mathbb{R},C_{Lip}(\mathbb{R}),\mathbb{\tilde{E}})$ such that $\xi_{1}=\xi$,
$\xi_{i+1}\overset{d}{=}\xi_{i}$ and $\xi_{i+1}\perp(\xi_{1},\xi_{2}%
,\ldots,\xi_{i})$ for each $i\in \mathbb{N}$. By using induction (\ref{2.2})
and the estimate (\ref{3.8}), it is easy to obtain that for any $k\geq l$ and
$x\in \mathbb{R}$,
\begin{equation}%
\begin{split}
&  \left \vert u_{\Delta}(k\Delta,x)-u_{\Delta}(l\Delta,x)\right \vert \\
&  =\big \vert \mathbb{\tilde{E}}\big [u_{\Delta}\big ((k-l)\Delta
,x+\Delta^{\frac{1}{\alpha}}\sum_{i=1}^{l}\xi_{i}\big )\big ]-\mathbb{\tilde
{E}}\big [u_{\Delta}\big (0,x+\Delta^{\frac{1}{\alpha}}\sum_{i=1}^{l}\xi
_{i}\big )\big ]\big \vert \\
&  \leq \mathbb{\tilde{E}}\big [\big \vert u_{\Delta}\big ((k-l)\Delta
,x+\Delta^{\frac{1}{\alpha}}\sum_{i=1}^{l}\xi_{i}\big )-u_{\Delta
}\big (0,x+\Delta^{\frac{1}{\alpha}}\sum_{i=1}^{l}\xi_{i}%
\big )\big \vert \big ]\\
&  \leq C_{\phi}((I_{1,\Delta})^{\frac{1}{2}}+2I_{2,\Delta})((k-l)\Delta
)^{\frac{1}{2}}.
\end{split}
\label{3.9}%
\end{equation}

Step 3. In general, for $s,t\in \lbrack0,T]$, let $\delta_{s},\delta_{t}%
\in \lbrack0,\Delta)$ such that $s-\delta_{s}$ and $t-\delta_{t}$ are in the
grid points $\{k\Delta:k\in \mathbb{N}\}$. Then, from (\ref{3.9}), we have%
\begin{align*}
u_{\Delta}(t,x)=u_{\Delta}(t-\delta_{t},x)  &  \leq u_{\Delta}(s-\delta
_{s},x)+C_{\phi}((I_{1,\Delta})^{\frac{1}{2}}+2I_{2,\Delta})|t-s-\delta
_{t}+\delta_{s}|^{\frac{1}{2}}\\
&  \leq u_{\Delta}(s,x)+C_{\phi}((I_{1,\Delta})^{\frac{1}{2}}+2I_{2,\Delta
})(|t-s|^{\frac{1}{2}}+\Delta^{\frac{1}{2}}).
\end{align*}
Similarly one proves that%
\[
u_{\Delta}(s,x)\leq u_{\Delta}(t,x)+C_{\phi}((I_{1,\Delta})^{\frac{1}{2}%
}+2I_{2,\Delta})(|t-s|^{\frac{1}{2}}+\Delta^{\frac{1}{2}}),
\]
and this yields (ii).
\end{proof}

\subsection{The monotone approximation scheme}

In this section, we first rewrite the recursive approximation (\ref{2.2}) as a
monotone scheme, and then derive its consistency error estimates and
comparison result.

For $\Delta \in(0,1)$, based on (\ref{2.2}),\ we introduce the monotone
approximation scheme as
\begin{equation}
\left \{
\begin{array}
[c]{l}%
S(\Delta,x,u_{\Delta}(t,x),u_{\Delta}(t-\Delta,\cdot))=0,\text{\ }
(t,x)\in \lbrack \Delta,T]\times \mathbb{R},\\
u_{\Delta}(t,x)=\phi(x),\text{\ }(t,x)\in \lbrack0,\Delta)\times \mathbb{R},
\end{array}
\right.  \label{4.1}%
\end{equation}
where $S:(0,1)\times \mathbb{R}\times \mathbb{R}\times C_{b}
(\mathbb{R)\rightarrow R}$ is defined by
\begin{equation}
S(\Delta,x,p,v)=\frac{p-\mathbb{\tilde{E}}[v(x+\Delta^{\frac{1}{\alpha}}\xi
)]}{\Delta}. \label{4.1.operator}%
\end{equation}

For a function $f$ defined on $[0,T]\times \mathbb{R}$, introduce its norm
$|f|_{0}:=\sup \limits_{[0,T]\times \mathbb{R}}|f(t,x)|$. We now give key
properties of the approximation scheme (\ref{4.1}).

\begin{proposition}
\label{prop}Suppose that $S(\Delta,x,p,v)$ is given in (\ref{4.1.operator}).
Then, the following properties hold:

\begin{description}
\item[(i) (Monotonicity)] For any $c_{1},c_{2}\in \mathbb{R}$ and any function
$u\in C_{b}(\mathbb{R)}$ with $u\leq v,$%
\[
S(\Delta,x,p+c_{1},u+c_{2})\geq S(\Delta,x,p,v)+\frac{c_{1}-c_{2}}{\Delta};
\]

\item[(ii) (Concavity)] For any $\lambda \in[0,1]$, $p_{1},p_{2}\in \mathbb{R}$,
and $v_{1},v_{2}\in C_{b}(\mathbb{R)}$, then $S(\Delta,x,p,v)$ is concave in
$(p,v)$, that is,
\begin{align*}
& S\left(  \Delta,x,\lambda p_{1}+(1-\lambda)p_{2},\lambda v_{1}(\cdot)+(1-\lambda)v_{2}(\cdot)\right)
\\
& \geq \lambda S\left(  \Delta,x,p_{1},v_{1}(\cdot)\right)  +(1-\lambda)S\left(
\Delta,x,p_{2},v_{2}(\cdot)\right)  ;
\end{align*}

\item[(iii) (Consistency)] For any $\omega \in C_{b}^{\infty}([\Delta
,T]\times \mathbb{R)}$, then
\[%
\begin{array}
[c]{l}%
\big \vert \partial_{t}\omega(t,x)-\sup \limits_{k_{\pm}\in K_{\pm}%
}\big \{ \int_{\mathbb{R}}\delta_{z}\omega(t,x)F_{k_{\pm}}(dz)\big \}-S(\Delta
,x,\omega(t,x),\omega(t-\Delta,\cdot))\big \vert \\
\leq(1+\mathbb{\tilde{E}}\left[  |\xi|\right]  )(|\partial_{t}^{2}\omega
|_{0}\Delta+|\partial_{t}D_{x}\omega|_{0}\Delta^{\frac{1}{\alpha}}%
)+R^{0}|D_{x}^{2}\omega|_{0}\Delta^{\frac{2-\alpha}{\alpha}}+|D_{x}^{2}%
\omega|_{0}R_{\Delta}^{1}+\left \vert D_{x}\omega \right \vert _{0}R_{\Delta}%
^{2},
\end{array}
\]
where
\[%
\begin{array}
[c]{rl}%
R^{0}= & \displaystyle \sup_{k_{\pm}\in K_{\pm}}\bigg \{|\beta_{1,k_{\pm}%
}(-1)|+|\beta_{2,k_{\pm}}(1)|+\int_{0}^{1}\big [|\alpha \beta_{1,k_{\pm}%
}(-z)+\beta_{1,k_{\pm}}^{\prime}(-z)z|\\
& \displaystyle+|\alpha \beta_{2,k_{\pm}}(z)-\beta_{2,k_{\pm}}^{\prime
}(z)z|\big ]z^{1-\alpha}dz\bigg \},\\
R_{\Delta}^{1}= & 5\displaystyle \sup_{k_{\pm}\in K_{\pm}}\bigg \{ \int
_{0}^{1}\left[  |\beta_{1,k_{\pm}}(-\Delta^{-\frac{1}{\alpha}}z)|+|\beta
_{2,k_{\pm}}(\Delta^{-\frac{1}{\alpha}}z)|\right]  z^{1-\alpha}dz\bigg \},\\
R_{\Delta}^{2}= & \displaystyle4\sup_{k_{\pm}\in K_{\pm}}\bigg \{|\beta
_{1,k_{\pm}}(-\Delta^{-\frac{1}{\alpha}})|+|\beta_{2,k_{\pm}}(\Delta
^{-\frac{1}{\alpha}})|\\
& \displaystyle+\int_{1}^{\infty}\left[  |\beta_{1,k_{\pm}}(-\Delta^{-\frac
{1}{\alpha}}z)|+|\beta_{2,k_{\pm}}(\Delta^{-\frac{1}{\alpha}}z)|\right]
z^{-\alpha}dz\bigg \}.
\end{array}
\]

\end{description}
\end{proposition}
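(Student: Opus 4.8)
The plan is to verify the three properties in turn, with monotonicity and concavity being immediate from the structure of $S$, and consistency requiring the bulk of the work. For \textbf{(i)}, I would simply expand $S(\Delta,x,p+c_1,u+c_2)$ using the definition, and then use the constant-preservation and positive-homogeneity properties of $\mathbb{\tilde{E}}$ together with monotonicity (since $u\leq v$) to split off the $c_2$ term as $\mathbb{\tilde{E}}[u(x+\Delta^{1/\alpha}\xi)+c_2]=\mathbb{\tilde{E}}[u(x+\Delta^{1/\alpha}\xi)]+c_2$ and bound the $u$-term below by the $v$-term. Collecting terms yields the stated inequality with the $(c_1-c_2)/\Delta$ correction. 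For \textbf{(ii)}, concavity in $(p,v)$ follows because $p\mapsto p/\Delta$ is linear and $v\mapsto -\mathbb{\tilde{E}}[v(x+\Delta^{1/\alpha}\xi)]/\Delta$ is concave: indeed $\mathbb{\tilde{E}}$ is sublinear (hence convex), so its negative is concave, and composition with the linear map $v\mapsto v(x+\Delta^{1/\alpha}\cdot)$ preserves concavity.

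The real content is the consistency estimate \textbf{(iii)}. The idea is to compare the discrete operator $S(\Delta,x,\omega(t,x),\omega(t-\Delta,\cdot))$ against the continuous PIDE operator term by term. First I would write out
\[
S(\Delta,x,\omega(t,x),\omega(t-\Delta,\cdot))=\frac{\omega(t,x)-\mathbb{\tilde{E}}[\omega(t-\Delta,x+\Delta^{1/\alpha}\xi)]}{\Delta},
\]
and Taylor-expand $\omega(t-\Delta,x+\Delta^{1/\alpha}\xi)$ in \emph{both} variables. The time expansion contributes $\omega(t,x)-\partial_t\omega(t,x)\Delta+O(|\partial_t^2\omega|_0\Delta^2)$ and a mixed term controlled by $|\partial_t D_x\omega|_0$; dividing by $\Delta$ recovers $\partial_t\omega$ plus the error terms $(1+\mathbb{\tilde{E}}[|\xi|])(|\partial_t^2\omega|_0\Delta+|\partial_t D_x\omega|_0\Delta^{1/\alpha})$, where the factor $1+\mathbb{\tilde{E}}[|\xi|]$ absorbs the moments of $\xi$ coming from the $\Delta^{1/\alpha}\xi$ spatial shift. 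The harder part is the spatial expansion: I would use $\mathbb{\tilde{E}}[D_x\omega(t,x)\Delta^{1/\alpha}\xi]=0$ (since $\mathbb{\tilde{E}}[\xi]=\mathbb{\tilde{E}}[-\xi]=0$ and $D_x\omega$ is a constant multiplier inside the expectation) so that the first-order term drops, leaving
\[
-\frac{1}{\Delta}\,\mathbb{\tilde{E}}\Big[\tfrac{1}{2}D_x^2\omega(t,x)\Delta^{2/\alpha}\xi^2+\cdots\Big],
\]
which must be matched against $\sup_{k_\pm}\{\int_{\mathbb{R}}\delta_z\omega(t,x)F_{k_\pm}(dz)\}$.

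The main obstacle, and the step I expect to demand the most care, is showing that this rescaled discrete second-order increment converges to the nonlocal integral operator with the precise error terms $R^0|D_x^2\omega|_0\Delta^{(2-\alpha)/\alpha}$, $|D_x^2\omega|_0 R_\Delta^1$, and $|D_x\omega|_0 R_\Delta^2$. The difficulty is that $\xi$ has infinite variance, so the naive second-moment bound diverges; the expansion of $\delta_z\omega=\omega(\cdot+z)-\omega(\cdot)-D_x\omega\, z$ must be handled by splitting the integration (equivalently, the law $F_{W_{k\pm}}$) into the region $|z|\leq B_\Delta=\Delta^{1/\alpha}$, where one Taylor-expands to second order and picks up the $D_x^2\omega$ contributions, and the region $|z|>B_\Delta$, where only first-order control via $D_x\omega$ is available. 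Rewriting everything through the cdf representation \eqref{2.1} and the functions $\beta_{1,k_\pm},\beta_{2,k_\pm}$, the leading singular part $k_\pm/|z|^{\alpha+1}$ produces the exact PIDE integral together with the clean $\Delta^{(2-\alpha)/\alpha}$ scaling encoded in $R^0$, while the remainder terms involving $\beta_{1,k_\pm}(B_\Delta^{-1}z)$ and $\beta_{2,k_\pm}(B_\Delta^{-1}z)$ assemble into $R_\Delta^1$ and $R_\Delta^2$ after the change of variables $z\mapsto B_\Delta^{-1}z$. Integration by parts (transferring derivatives between $\omega$ and the $\beta$'s, which explains the $\beta'_{i,k_\pm}$ terms appearing in $R^0$) and careful bookkeeping of the boundary contributions at $|z|=1$ and $|z|=B_\Delta$ are what generate the explicit constants $|\beta_{1,k_\pm}(\pm B_\Delta^{-1})|$; taking the supremum over $k_\pm\in K_\pm$ throughout and invoking Assumption (A1) to guarantee finiteness completes the argument.
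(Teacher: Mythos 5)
Your plan is correct and follows essentially the same route as the paper: parts (i)--(ii) are dismissed as immediate, and for (iii) the paper likewise splits the consistency error into a time part (Taylor expansion in $t$ plus $\mathbb{\tilde{E}}[\xi]=\mathbb{\tilde{E}}[-\xi]=0$ to remove the first-order spatial term) and a nonlocal part, which after the change of variables $z\mapsto B_{\Delta}^{-1}z$ is reduced to the $\beta$-remainders and handled by the three-piece split $\int_{0}^{B_{\Delta}}+\int_{B_{\Delta}}^{1}+\int_{1}^{\infty}$ with integration by parts generating the $\beta'$ terms and the boundary contributions at $z=B_{\Delta}$ and $z=1$, exactly as you sketch. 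Two small imprecisions, neither fatal: the divergent intermediate expression $\mathbb{\tilde{E}}\big[\tfrac{1}{2}D_{x}^{2}\omega\,\Delta^{2/\alpha}\xi^{2}\big]$ should never be written (one must split the law first and only Taylor-expand $\delta_{z}\omega$ near the origin, as you then say), and the $R^{0}\Delta^{\frac{2-\alpha}{\alpha}}$ term is not produced by the leading singular part $k_{\pm}/|z|^{\alpha+1}$ --- that part cancels the PIDE integral identically by self-similarity --- but entirely by the $\beta$-corrections on $(0,B_{\Delta})$ together with the boundary terms $\beta_{1,k_{\pm}}(-1)$, $\beta_{2,k_{\pm}}(1)$.
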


\begin{proof}
Parts (i)-(ii) are immediate, so we only prove (iii). To this end, we split
the consistency error into two parts. Specifically, for $(t,x)\in
[\Delta,T]\times \mathbb{R}$,%
\[%
\begin{array}
[c]{l}%
\Big \vert \partial_{t}\omega(t,x)-\sup \limits_{k_{\pm}\in K_{\pm}%
}\big \{ \int_{\mathbb{R}}\delta_{z}\omega(t,x)F_{k_{\pm}}(dz)\big \}-S(\Delta
,x,\omega(t,x),\omega(t-\Delta,\cdot))\Big \vert \\
\leq \Delta^{-1}\Big \vert \mathbb{\tilde{E}}[\omega(t-\Delta,x+\Delta
^{\frac{1}{\alpha}}\xi)]-\mathbb{\tilde{E}}[\omega(t,x+\Delta^{\frac{1}%
{\alpha}} \xi)-D_{x}\omega(t,x)\Delta^{\frac{1}{\alpha}}\xi]+\partial
_{t}\omega(t,x)\Delta \Big \vert \\
\text{ \ }+\Delta^{-1}\Big \vert \mathbb{\tilde{E}}[\delta_{\Delta^{1/\alpha
}\xi}\omega(t,x)]-\sup \limits_{k_{\pm}\in K_{\pm}}\big \{ \int_{\mathbb{R}%
}\delta_{z}\omega(t,x)F_{k_{\pm}}(dz)\big \} \Delta \Big \vert:=I+II.
\end{array}
\]
Applying Taylor's expansion (twice) yields that
\begin{equation}
\omega(t,x+\Delta^{\frac{1}{\alpha}}\xi)=\omega(t-\Delta,x+\Delta^{\frac
{1}{\alpha}}\xi)+\int_{t-\Delta}^{t}\partial_{t}\omega(s,x)ds+\int_{t-\Delta
}^{t}\int_{x}^{x+\Delta^{1/\alpha}\xi}\partial_{t}D_{x}\omega(s,y)dyds.
\label{4.2}%
\end{equation}
Since $\mathbb{\tilde{E}}[\xi]=\mathbb{\tilde{E}}[-\xi]=0$, then (\ref{4.2})
and the mean value theorem give
\begin{equation}%
\begin{split}
I  &  \leq \Delta^{-1}\int_{t-\Delta}^{t}\left \vert \partial_{t}\omega
(t,x)-\partial_{t}\omega(s,x)\right \vert ds+\Delta^{-1}\mathbb{\tilde{E}%
}\bigg[\bigg \vert \int_{t-\Delta}^{t}\int_{x}^{x+\Delta^{1/\alpha}\xi
}\partial_{t}D_{x}\omega(s,y)dyds\bigg \vert \bigg]\\
&  \leq \frac{1}{2}|\partial_{t}^{2}\omega|_{0}\Delta+\mathbb{\tilde{E}}%
[|\xi|]|\partial_{t}D_{x}\omega|_{0}\Delta^{\frac{1}{\alpha}}.
\end{split}
\label{4.4}%
\end{equation}
For the part $II$, by changing variables, we get%
\begin{align*}
II  &  \leq \sup \limits_{k_{\pm}\in K_{\pm}}\bigg \{ \bigg \vert \int
_{\mathbb{-\infty}}^{0}\delta_{z}\omega(t,x)[-\beta_{1,k_{\pm}}^{\prime
}(\Delta^{-\frac{1}{\alpha}}z)\Delta^{-\frac{1}{\alpha}}z+\alpha
\beta_{1,k_{\pm}}(\Delta^{-\frac{1}{\alpha}}z)]|z|^{-\alpha-1}dz\\
&  \  \  \  \ +\int_{0}^{\infty}\delta_{z}\omega(t,x)[-\beta_{2,k_{\pm}}^{\prime
}(\Delta^{-\frac{1}{\alpha}}z)\Delta^{-\frac{1}{\alpha}}z+\alpha
\beta_{2,k_{\pm}}(\Delta^{-\frac{1}{\alpha}}z)]z^{-\alpha-1}%
dz\bigg \vert \bigg \}.
\end{align*}
We only consider the integral above along the positive half-line, and
similarly for the integral along the negative half-line. For simplicity, we
set%
\[%
\begin{array}
[c]{l}%
\rho=\delta_{z}\omega(t,x)[-\beta_{2,k_{\pm}}^{\prime}(\Delta^{-\frac
{1}{\alpha}}z)\Delta^{-\frac{1}{\alpha}}z+\alpha \beta_{2,k_{\pm}}%
(\Delta^{-\frac{1}{\alpha}}z)]z^{-\alpha-1},\\
\int_{0}^{\infty}\rho dz=\int_{1}^{\infty}\rho dz+\int_{\Delta^{1/\alpha}}%
^{1}\rho dz+\int_{0}^{\Delta^{1/\alpha}}\rho dz:=J_{1}+J_{2}+J_{3}.
\end{array}
\]
Using integration by parts, we have for any $k_{\pm}\in K_{\pm}$,
\begin{align*}
|J_{1}|  &  =\bigg \vert \delta_{1}\omega(t,x)\beta_{2,k_{\pm}}(\Delta
^{-\frac{1}{\alpha}})\\
&  \text{ \  \  \ }+\int_{1}^{\infty}\beta_{2,k_{\pm}}(\Delta^{-\frac{1}{\alpha
}}z)[D_{x}\omega(t,x+z)-D_{x}\omega(t,x)]z^{-\alpha}dz\bigg \vert \\
&  \leq2\left \vert D_{x}\omega \right \vert _{0}\bigg (|\beta_{2,k_{\pm}}%
(\Delta^{-\frac{1}{\alpha}})|+\int_{1}^{\infty}|\beta_{2,k_{\pm}}%
(\Delta^{-\frac{1}{\alpha}}z)|z^{-\alpha}dz\bigg),
\end{align*}
where we have used the fact that for $\theta \in(0,1)$
\[
|\delta_{1}\omega(t,x)|=|D_{x}\omega(t,x+\theta)-D_{x}\omega(t,x)|\leq
2|D_{x}\omega|_{0}.
\]
Notice that for any $k_{\pm}\in K_{\pm}$,%
\begin{align*}
|J_{2}|  &  \leq \bigg \vert \int_{\Delta^{1/\alpha}}^{1}\alpha \delta_{z}%
\omega(t,x)\beta_{2,k_{\pm}}(\Delta^{-\frac{1}{\alpha}}z)z^{-\alpha
-1}dz\bigg \vert \\
&  \text{ \  \  \ }+\bigg \vert \int_{\Delta^{1/\alpha}}^{1}\delta_{z}%
\omega(t,x)[-\beta_{2,k_{\pm}}^{\prime}(\Delta^{-\frac{1}{\alpha}}%
z)\Delta^{-\frac{1}{\alpha}}z]z^{-\alpha-1}dz\bigg \vert.
\end{align*}
By means of integration by parts and the mean value theorem, we obtain%
\begin{align*}
&  \bigg \vert \int_{\Delta^{1/\alpha}}^{1}\delta_{z}\omega(t,x)[-\beta
_{2,k_{\pm}}^{\prime}(\Delta^{-\frac{1}{\alpha}}z)\Delta^{-\frac{1}{\alpha}%
}z]z^{-\alpha-1}dz\bigg \vert \\
&  =\bigg \vert \delta_{\Delta^{1/\alpha}}\omega(t,x)\beta_{2,k_{\pm}%
}(1)\Delta^{-1}-\delta_{1}\omega(t,x)\beta_{2,k_{\pm}}(\Delta^{-\frac
{1}{\alpha}})\\
&  \text{ \  \  \ }+\int_{\Delta^{1/\alpha}}^{1}\beta_{2,k_{\pm}}(\Delta
^{-\frac{1}{\alpha}}z)[D_{x}\omega(t,x+z)-D_{x}\omega(t,x)]z^{-\alpha}dz\\
&  \text{ \  \  \ }-\alpha \int_{\Delta^{1/\alpha}}^{1}\delta_{z}\omega
(t,x)\beta_{2,k_{\pm}}(\Delta^{-\frac{1}{\alpha}}z)z^{-\alpha-1}%
dz\bigg \vert \\
&  \leq|D_{x}^{2}\omega|_{0}|\beta_{2,k_{\pm}}(1)|\Delta^{\frac{2-\alpha
}{\alpha}}+2|D_{x}\omega|_{0}|\beta_{2,k_{\pm}}(\Delta^{-\frac{1}{\alpha}})|\\
&  \text{ \  \  \ }+(\alpha+1)|D_{x}^{2}\omega|_{0}\int_{0}^{1}|\beta_{2,k_{\pm
}}(\Delta^{-\frac{1}{\alpha}}z)|z^{1-\alpha}dz,
\end{align*}
by using the fact that for $\theta \in(0,1)$
\[
|\delta_{z}\omega(t,x)|=\frac{1}{2}|D_{x}^{2}\omega(t,x+\theta z)z^{2}%
|\leq|D_{x}^{2}\omega|_{0}z^{2},
\]
and similarly,
\[
\bigg \vert \int_{\Delta^{1/\alpha}}^{1}\alpha \delta_{z}\omega(t,x)\beta
_{2,k_{\pm}}(\Delta^{-\frac{1}{\alpha}}z)z^{-\alpha-1}dz\bigg \vert \leq
\alpha|D_{x}^{2}\omega|_{0}\int_{0}^{1}|\beta_{2,k_{\pm}}(\Delta^{-\frac
{1}{\alpha}}z)|z^{1-\alpha}dz.
\]
In the same way, we can also obtain
\begin{align*}
|J_{3}|  &  \leq|D_{x}^{2}\omega|_{0}\int_{0}^{\Delta^{1/\alpha}%
}\big \vert \alpha \beta_{2,k_{\pm}}(\Delta^{-\frac{1}{\alpha}}z)-\beta
_{2,k_{\pm}}^{\prime}(\Delta^{-\frac{1}{\alpha}}z)\Delta^{-\frac{1}{\alpha}%
}z\big \vert z^{1-\alpha}dz\\
&  =|D_{x}^{2}\omega|_{0}\Delta^{\frac{2-\alpha}{\alpha}}\int_{0}%
^{1}\big \vert \alpha \beta_{2,k_{\pm}}(z)-\beta_{2,k_{\pm}}^{\prime
}(z)z\big \vert z^{1-\alpha}dz.
\end{align*}
Together with $J_{1},J_{2}$ and $J_{3}$, we conclude that
\begin{equation}%
\begin{split}
II  &  \leq4\left \vert D_{x}\omega \right \vert _{0}\sup_{k_{\pm}\in K_{\pm}%
}\bigg \{|\beta_{1,k_{\pm}}(-\Delta^{-\frac{1}{\alpha}})|+|\beta_{2,k_{\pm}%
}(\Delta^{-\frac{1}{\alpha}})|\\
&  \text{ \  \ }+\int_{1}^{\infty}[|\beta_{1,k_{\pm}}(-\Delta^{-\frac{1}%
{\alpha}}z)|+|\beta_{2,k_{\pm}}(\Delta^{-\frac{1}{\alpha}}z)|]z^{-\alpha
}dz\bigg \} \\
\text{ \ }  &  \text{ \  \ }+(1+2\alpha)|D_{x}^{2}\omega|_{0}\sup_{k_{\pm}\in
K_{\pm}}\bigg \{ \int_{0}^{1}[|\beta_{1,k_{\pm}}(-\Delta^{-\frac{1}{\alpha}%
}z)|+|\beta_{2,k_{\pm}}(\Delta^{-\frac{1}{\alpha}}z)|]z^{1-\alpha}dz\bigg \} \\
&  \text{ \  \ }+\Delta^{\frac{2-\alpha}{\alpha}}|D_{x}^{2}\omega|_{0}%
\sup_{k_{\pm}\in K_{\pm}}\bigg \{|\beta_{1,k_{\pm}}(-1)|+|\beta_{2,k_{\pm}%
}(1)|\\
&  \text{ \  \ }+\int_{0}^{1}[|\alpha \beta_{1,k_{\pm}}(-z)+\beta_{1,k_{\pm}%
}^{\prime}(-z)z|+|\alpha \beta_{2,k_{\pm}}(z)-\beta_{2,k_{\pm}}^{\prime
}(z)z|]z^{1-\alpha}dz\bigg \}.
\end{split}
\label{4.5}%
\end{equation}
Consequently, the desired conclusion follows from \eqref{4.4} and \eqref{4.5}.
\end{proof}

From Proposition \ref{prop} (i) we can derive the following comparison result
for the scheme (\ref{4.1}), which will be used throughout this paper.

\begin{lemma}
\label{comparison theorem}Suppose that $\underline{v},\bar{v}\in
C_{b}([0,T]\times \mathbb{R)}$ satisfy%
\begin{align*}
S(\Delta,x,\underline{v}(t,x),\underline{v}(t-\Delta,\cdot))  &  \leq
h_{1}\text{ \ in }(\Delta,T]\times \mathbb{R}\text{,}\\
S(\Delta,x,\bar{v}(t,x),\bar{v}(t-\Delta,\cdot))  &  \geq h_{2}\text{ \ in
}(\Delta,T]\times \mathbb{R}\text{,}%
\end{align*}
where $h_{1},h_{2}\in C_{b}((\Delta,T]\times \mathbb{R})$. Then
\[
\underline{v}-\bar{v}\leq \sup_{(t,x)\in \lbrack0,\Delta]\times \mathbb{R}%
}(\underline{v}-\bar{v})^{+}+t\sup_{(t,x)\in(\Delta,T]\times \mathbb{R}}%
(h_{1}-h_{2})^{+}\text{.}%
\]

\end{lemma}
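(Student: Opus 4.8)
The plan is to reduce the comparison to a one-step estimate on the difference $w:=\underline{v}-\bar{v}$ and then iterate backward in time over the grid spacing $\Delta$. Write $m_0:=\sup_{[0,\Delta]\times\mathbb{R}}(\underline{v}-\bar{v})^+$ and $h:=\sup_{(\Delta,T]\times\mathbb{R}}(h_1-h_2)^+$, and set $M(s):=\sup_{x\in\mathbb{R}}w(s,x)$; then the target inequality becomes $M(t)\le m_0+th$ for every $t\in[0,T]$, which is precisely the asserted bound read pointwise in $x$.

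First I would unwind the definition of $S$. The subsolution inequality $S(\Delta,x,\underline{v}(t,x),\underline{v}(t-\Delta,\cdot))\le h_1$ is equivalent to $\underline{v}(t,x)\le\mathbb{\tilde{E}}[\underline{v}(t-\Delta,x+\Delta^{1/\alpha}\xi)]+\Delta h_1(t,x)$, and similarly the supersolution inequality gives $\bar{v}(t,x)\ge\mathbb{\tilde{E}}[\bar{v}(t-\Delta,x+\Delta^{1/\alpha}\xi)]+\Delta h_2(t,x)$, for all $(t,x)\in(\Delta,T]\times\mathbb{R}$. Subtracting and using subadditivity of $\mathbb{\tilde{E}}$ (so that $\mathbb{\tilde{E}}[\underline{v}(t-\Delta,\cdot)]-\mathbb{\tilde{E}}[\bar{v}(t-\Delta,\cdot)]\le\mathbb{\tilde{E}}[w(t-\Delta,\cdot)]$) yields the one-step estimate $w(t,x)\le\mathbb{\tilde{E}}[w(t-\Delta,x+\Delta^{1/\alpha}\xi)]+\Delta(h_1-h_2)(t,x)$; this is exactly what the monotonicity in Proposition \ref{prop}\,(i) encodes. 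Since $w(t-\Delta,\cdot)\le M(t-\Delta)$ pointwise, monotonicity and constant preservation of $\mathbb{\tilde{E}}$ give $\mathbb{\tilde{E}}[w(t-\Delta,x+\Delta^{1/\alpha}\xi)]\le M(t-\Delta)$, and taking the supremum over $x$ produces the recursion $M(t)\le M(t-\Delta)+\Delta h$ for every $t\in(\Delta,T]$.

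The last step is to iterate this recursion backward. Fixing $t\in(\Delta,T]$, I would let $n\ge1$ be the smallest integer with $t-n\Delta\le\Delta$; then $t-k\Delta\in(\Delta,T]$ for $0\le k\le n-1$, so the recursion applies at each of these times, while $t-n\Delta\in(0,\Delta]$. Chaining the $n$ inequalities gives $M(t)\le M(t-n\Delta)+n\Delta h\le m_0+n\Delta h$, where $M(t-n\Delta)\le m_0$ because $t-n\Delta$ lies in the initial layer $[0,\Delta]$. Since $t-n\Delta>0$ forces $n\Delta<t$ and $h\ge0$, we conclude $M(t)\le m_0+th$; for $t\in[0,\Delta]$ the bound $M(t)\le m_0\le m_0+th$ is immediate.

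The main obstacle I anticipate is the bookkeeping in this last step rather than any analytic difficulty: because the scheme inequalities only hold on the half-open slab $(\Delta,T]\times\mathbb{R}$ and the time variable is continuous (not confined to grid points $k\Delta$), I must verify carefully that every intermediate time $t-k\Delta$ with $k\le n-1$ genuinely lies in $(\Delta,T]$ and that the terminal time $t-n\Delta$ lands in $[0,\Delta]$, so that the number of steps satisfies $n\Delta\le t$ and the accumulated error $n\Delta h$ is controlled by $th$. Once the choice of $n$ is pinned down, the remaining estimates are routine consequences of the monotonicity, subadditivity, and constant-preservation properties of $\mathbb{\tilde{E}}$.
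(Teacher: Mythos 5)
Your proof is correct, and it takes a genuinely different route from the paper's. The paper follows the Barles--Jakobsen template (Lemma 3.2 in \cite{BJ2007}): it first reduces to the case $\underline{v}\leq\bar{v}$ on $[0,\Delta]\times\mathbb{R}$ and $h_{1}\leq h_{2}$, then introduces the penalization $\psi_{c}(t)=ct$, sets $g(c)=\sup\{\underline{v}-\bar{v}-\psi_{c}\}$, and argues by contradiction along an approximate-maximizing sequence $(t_{n},x_{n})$, using only the abstract monotonicity property of Proposition \ref{prop}\,(i) applied twice. You instead exploit the explicit affine form of $S$ as a difference quotient: unwinding $S$ gives the one-step inequality $w(t,x)\leq\mathbb{\tilde{E}}[w(t-\Delta,x+\Delta^{1/\alpha}\xi)]+\Delta(h_{1}-h_{2})(t,x)$ for $w=\underline{v}-\bar{v}$ via subadditivity, which after taking suprema yields the recursion $M(t)\leq M(t-\Delta)+\Delta h$, and your backward iteration bookkeeping is sound: minimality of $n$ forces $t-k\Delta\in(\Delta,T]$ for $k\leq n-1$ and $t-n\Delta\in(0,\Delta]$, whence $n\Delta<t$ and $M(t)\leq m_{0}+th$ (the recursion holds at every real $t\in(\Delta,T]$, so continuous time causes no trouble). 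What each approach buys: yours is more elementary and constructive --- no contradiction, no penalization, no approximate maximizers --- and it transparently tracks where the factor $t$ comes from ($n\Delta<t$); the paper's argument, by contrast, never inverts $S$ in $p$ and uses only the axiomatic monotonicity estimate, so it transfers verbatim to general monotone schemes where the scheme operator is not an explicit difference quotient, and its $\delta_{n}$-sequence device handles suprema that need not be attained without any grid decomposition. One point worth making explicit in your write-up: $\underline{v},\bar{v}$ are only in $C_{b}$, not $C_{Lip}$, so $\mathbb{\tilde{E}}$ is being applied outside $C_{Lip}(\mathbb{R})$; this is harmless because the paper already defines $S$ on $C_{b}(\mathbb{R})$, implicitly extending $\mathbb{\tilde{E}}$ by the same formula $\sup_{k_{\pm}\in K_{\pm}}\int\varphi\,dF_{W_{k\pm}}$, under which monotonicity, constant preservation, and subadditivity all persist.
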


\begin{proof}
The basic idea of the proof comes from Lemma 3.2 in \cite{BJ2007}. For
reader's convenience, we shall give the sketch of the proof. We first note
that it suffices to prove the lemma in the case
\[
\underline{v}\leq \bar{v}\text{ \ in }[0,\Delta]\times \mathbb{R}\text{,
\ }h_{1}\leq h_{2}\  \  \text{in }(\Delta,T]\times \mathbb{R}.
\]
The general case follows from this after seeing that the monotonicity property
in Proposition \ref{prop} (i)
\[
\omega:=\bar{v}+\sup_{(t,x)\in \lbrack0,\Delta]\times \mathbb{R}}(\underline
{v}-\bar{v})^{+}+t\sup_{(t,x)\in(\Delta,T]\times \mathbb{R}}(h_{1}-h_{2})^{+}%
\]
satisfies
\[
S(\Delta,x,\omega(t,x),\omega(t-\Delta,\cdot))\geq S(\Delta,x,\bar
{v}(t,x),\bar{v}(t-\Delta,\cdot))+\sup_{(t,x)\in(\Delta,T]\times \mathbb{R}%
}(h_{1}-h_{2})^{+}\geq h_{1},
\]
for $(t,x)\in(\Delta,T]\times \mathbb{R}$, and $\underline{v}\leq \omega$ in
$[0,\Delta]\times \mathbb{R}$.

For $c\geq0$, let $\psi_{c}(t):=ct$\ and $g(c):=\sup_{(t,x)\in \lbrack
0,T]\times \mathbb{R}}\{ \underline{v}-\bar{v}-\psi_{c}\}$.\ Next, we have to
prove that $g(0)\leq0$ and we argue by contradiction assuming $g(0)>0$. From
the continuity of $g$, we can find some $c>0$ such that $g(c)>0$. For such
$c$, take a sequence $\{(t_{n},x_{n})\}_{n\geq1}\subset$ $[0,T]\times
\mathbb{R}$ such that
\[
\delta_{n}:=g(c)-(\underline{v}-\bar{v}-\psi_{c})(t_{n},x_{n})\rightarrow
0\text{, as }n\rightarrow \infty \text{.}%
\]
Since $\underline{v}-\bar{v}-\psi_{c}\leq0$ in $[0,\Delta]\times \mathbb{R}$
and $g(c)>0$, we assert that $t_{n}>\Delta$ for sufficiently large $n$. For
such $n$, applying Proposition \ref{prop} (i) (twice) we can deduce
\begin{align*}
h(t_{n},x_{n})  &  \geq S(\Delta,x,\underline{v}(t,x),\underline{v}%
(t-\Delta,\cdot))\\
&  \geq S(\Delta,x,\bar{v}(t_{n},x_{n})+\psi_{c}(t_{n})+g(c)-\delta_{n}%
,\bar{v}(t_{n}-\Delta,\cdot)+\psi_{c}(t_{n}-\Delta)+g(c))\\
&  \geq S(\Delta,x,\bar{v}(t_{n},x_{n}),\bar{v}(t_{n}-\Delta,\cdot))+(\psi
_{c}(t_{n})-\psi_{c}(t_{n}-\Delta)-\delta_{n})\Delta^{-1}\\
&  \geq h_{2}(t_{n},x_{n})+c-\delta_{n}\Delta^{-1}.
\end{align*}
Since $h_{1}\leq h_{2}$ in $(\Delta,T]\times \mathbb{R}$, this yields that
$c-\delta_{n}\Delta^{-1}\leq0$. By letting $n\rightarrow \infty$, we obtain
$c\leq0$, which is a contradiction.
\end{proof}

\subsection{Convergence rate of the monotone approximation scheme}

In this subsection, we shall prove the convergence rate of the monotone
approximation scheme $u_{\Delta}$ in Theorem \ref{main theorem 2}. The
convergence of the approximate solution $u_{\Delta}$ to the viscosity solution
$u$ follows from a nonlocal extension of the Barles-Souganidis half-relaxed
limits method \cite{BS1991}.

We start from the first time interval $[0,\Delta]\times \mathbb{R}$.

\begin{lemma}
\label{first interval estimate}Suppose that $\phi \in C_{b,Lip}(\mathbb{R})$.
Then, for $(t,x)\in \lbrack0,\Delta]\times \mathbb{R}$,
\begin{equation}
|u(t,x)-u_{\Delta}(t,x)|\leq C_{\phi}(M_{X}^{1}+M_{\xi}^{1})\Delta^{\frac
{1}{\alpha}}, \label{5.1}%
\end{equation}
where $C_{\phi}$ is the Lipschitz constant of $\phi$, $M_{\xi}^{1}%
:=\mathbb{\tilde{E}}[|\xi|]$ and $M_{X}^{1}:=\mathbb{\hat{E}}[|X_{1}|]$.
\end{lemma}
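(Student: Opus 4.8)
The plan is to exploit that on the short interval $[0,\Delta]$ both $u$ and $u_{\Delta}$ stay close to the common initial datum $\phi$, and to bound each deviation from $\phi$ separately before combining them by the triangle inequality. The only care needed is that the definition of $u_{\Delta}$ switches form at $t=\Delta$, so I would split the argument into the two cases $t\in[0,\Delta)$ and $t=\Delta$.

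For $t\in[0,\Delta)$, the scheme gives $u_{\Delta}(t,x)=\phi(x)=u(0,x)$, so it suffices to control $|u(t,x)-u(0,x)|$. Writing $u(t,x)=\mathbb{\hat{E}}[\phi(x+X_{t})]$ and $u(0,x)=\phi(x)=\mathbb{\hat{E}}[\phi(x+X_{0})]$ (recall $X_{0}=0$), Proposition \ref{Prop^E Plimi}(ii) and the Lipschitz property of $\phi$ yield
\[
|u(t,x)-u(0,x)|\leq \mathbb{\hat{E}}[|\phi(x+X_{t})-\phi(x)|]\leq C_{\phi}\mathbb{\hat{E}}[|X_{t}|].
\]
The self-similarity property $X_{t}\overset{d}{=}t^{1/\alpha}X_{1}$ together with positive homogeneity gives $\mathbb{\hat{E}}[|X_{t}|]=t^{1/\alpha}M_{X}^{1}\leq \Delta^{1/\alpha}M_{X}^{1}$, which is already dominated by the claimed bound.

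For $t=\Delta$, the recursion first activates and reads $u_{\Delta}(\Delta,x)=\mathbb{\tilde{E}}[\phi(x+\Delta^{1/\alpha}\xi)]$, while $u(\Delta,x)=\mathbb{\hat{E}}[\phi(x+X_{\Delta})]$. I would compare each of these to $\phi(x)$. Using constant preservation, Proposition \ref{Prop^E Plimi}(ii), the Lipschitz bound, and then $X_{\Delta}\overset{d}{=}\Delta^{1/\alpha}X_{1}$ together with positive homogeneity, one obtains
\[
|u(\Delta,x)-\phi(x)|\leq C_{\phi}\mathbb{\hat{E}}[|X_{\Delta}|]=C_{\phi}\Delta^{1/\alpha}M_{X}^{1},
\]
\[
|u_{\Delta}(\Delta,x)-\phi(x)|\leq C_{\phi}\mathbb{\tilde{E}}[\Delta^{1/\alpha}|\xi|]=C_{\phi}\Delta^{1/\alpha}M_{\xi}^{1}.
\]
Adding these through the triangle inequality produces exactly $C_{\phi}(M_{X}^{1}+M_{\xi}^{1})\Delta^{1/\alpha}$, which also covers the first case since that bound only used $M_{X}^{1}$. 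The argument involves no genuine obstacle; the only points requiring attention are the correct use of the self-similarity lemma to convert the first moment of $X_{\Delta}$ into $\Delta^{1/\alpha}M_{X}^{1}$, and the bookkeeping at the switching point $t=\Delta$, where one must compare the \emph{two different} sublinear expectations $\mathbb{\hat{E}}$ and $\mathbb{\tilde{E}}$ indirectly, through their common value $\phi(x)$, rather than attempting to compare them directly.
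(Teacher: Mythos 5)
Your proposal is correct and follows essentially the same route as the paper: both arguments compare $u(\Delta,\cdot)$ and $u_{\Delta}(\Delta,\cdot)$ to the common value $\phi(x)$ via the triangle inequality, the Lipschitz property of $\phi$, and the scaling $X_{\Delta}\overset{d}{=}\Delta^{1/\alpha}X_{1}$ together with positive homogeneity. If anything, your treatment of $t\in[0,\Delta)$ is slightly more careful than the paper's, which asserts $u(t,x)=\phi(x)$ there (true only at $t=0$), whereas your bound $|u(t,x)-\phi(x)|\leq C_{\phi}\,t^{1/\alpha}M_{X}^{1}\leq C_{\phi}\Delta^{1/\alpha}M_{X}^{1}$ covers the whole half-open interval.
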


\begin{proof}
Clearly, (\ref{5.1}) holds in $(t,x)\in \lbrack0,\Delta)\times \mathbb{R}$,
since
\[
u(0,x)=u_{\Delta}(t,x)=\phi(x)\text{, \ }(t,x)\in \lbrack0,\Delta
)\times \mathbb{R}.
\]
For $t=\Delta$, from Lemma \ref{DPP} and (\ref{2.2}), we obtain that
\begin{align*}
|u(\Delta,x)-u_{\Delta}(\Delta,x)|  &  \leq|u(\Delta,x)-u(0,x)|+|u_{\Delta
}(0,x)-u_{\Delta}(\Delta,x)|\\
&  \leq \mathbb{\hat{E}}[|\phi(x+X_{\Delta})-\phi(x)|]+\mathbb{\tilde{E}}%
[|\phi(x)-\phi(x+\Delta^{\frac{1}{\alpha}}\xi)|]\\
&  \leq C_{\phi}(\mathbb{\hat{E}}[|X_{1}|]+\mathbb{\tilde{E}}[|\xi
|])\Delta^{\frac{1}{\alpha}},
\end{align*}
which implies the desired result.
\end{proof}

\subsubsection{Lower bound for the error of approximation scheme}

In order to obtain the lower bound for the approximation scheme, we follow
Krylov's regularization results \cite{Krylov1997,Krylov1999,Krylov2000} (see
also \cite{BJ2002,BJ2005} for analogous results under PDE arguments). For
$\varepsilon \in(0,1)$, we first extend (\ref{u_PIDE}) to the domain
$[0,T+\varepsilon^{2}]\times \mathbb{R}$ and still denote as $u$. For
$(t,x)\in \lbrack0,T]\times \mathbb{R}$, we define the mollification of $u$ by
\[
u^{\varepsilon}(t,x)=u\ast \zeta_{\varepsilon}(t,x)=\int_{-\varepsilon^{2}%
<\tau<0}\int_{|e|<\varepsilon}u(t-\tau,x-e)\zeta_{\varepsilon}(\tau
,e)ded\tau.
\]
In view of Lemma \ref{u_regularity}, the standard properties of mollifiers
indicate that%
\begin{equation}%
\begin{array}
[c]{l}%
|u-u^{\varepsilon}|_{0}\leq C_{\phi,\mathcal{K}}(\varepsilon+\varepsilon
^{\frac{2}{\alpha}})\leq2C_{\phi,\mathcal{K}}\varepsilon,\\
|\partial_{t}^{l}D_{x}^{k}u^{\varepsilon}|_{0}\leq C_{\phi,\mathcal{K}%
}M_{\zeta}(\varepsilon+\varepsilon^{\frac{2}{\alpha}})\varepsilon^{-2l-k}%
\leq2C_{\phi,\mathcal{K}}M_{\zeta}\varepsilon^{1-2l-k}\text{ \ for }k+l\geq1,
\end{array}
\label{5.3}%
\end{equation}
where
\[
M_{\zeta}:=\max \limits_{k+l\geq1}\int_{-1<t<0}\int_{|x|<1}|\partial_{t}%
^{l}D_{x}^{k}\zeta(t,x)|dxdt<\infty.
\]

We obtain the following lower bound.

\begin{lemma}
\label{lower bound}Suppose that (A1)-(A3) hold and $\phi \in C_{b,Lip}%
(\mathbb{R})$. Then, for $(t,x)\in \lbrack0,T]\times \mathbb{R}$,%
\[
u_{\Delta}(t,x)\leq u(t,x)+L_{0}\Delta^{\Gamma(\alpha,q)},
\]
where $\Gamma(\alpha,q)=\min \{ \frac{1}{4},\frac{2-\alpha}{2\alpha},\frac
{q}{2}\}$ and $L_{0}$ is a constant depending on $C_{\phi},C_{\phi
,\mathcal{K}},M_{X}^{1},M_{\xi}^{1},M_{\zeta},M$ given in (\ref{L}).
\end{lemma}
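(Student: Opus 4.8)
The plan is to use Krylov's regularization method, which leverages the concavity of the PIDE (\ref{1.1}). First I would fix $\varepsilon\in(0,1)$ and pass from $u$ to its mollification $u^\varepsilon=u\ast\zeta_\varepsilon$, using the extension of $u$ to $[0,T+\varepsilon^{2}]\times\mathbb{R}$ so that $u^\varepsilon$ is well defined on $[0,T]\times\mathbb{R}$. By Lemma \ref{u_regularity} the estimates (\ref{5.3}) are available, namely $|u-u^\varepsilon|_0\le 2C_{\phi,\mathcal{K}}\varepsilon$ and $|\partial_t^{l}D_x^{k}u^\varepsilon|_0\le 2C_{\phi,\mathcal{K}}M_\zeta\varepsilon^{1-2l-k}$ for $k+l\ge1$, so that all relevant derivative norms of $u^\varepsilon$ are controlled by negative powers of $\varepsilon$.

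The key step is to show that $u^\varepsilon$ is a classical supersolution of (\ref{1.1}). Since $F_{k_\pm}$ does not depend on $(t,x)$, every translate $u(\cdot-\tau,\cdot-e)$ solves (\ref{1.1}), and $u^\varepsilon$ is a $\zeta_\varepsilon$-weighted average of such translates. Writing the operator as $F[\omega]=\partial_t\omega-\sup_{k_\pm\in K_\pm}L_{k_\pm}[\omega]$ with the \emph{linear} $L_{k_\pm}[\omega]=\int_{\mathbb{R}}\delta_z\omega\,F_{k_\pm}(dz)$, the map $\omega\mapsto\sup_{k_\pm}L_{k_\pm}[\omega]$ is convex; hence $\sup_{k_\pm}L_{k_\pm}[u^\varepsilon]\le\int\big(\sup_{k_\pm}L_{k_\pm}[u(\cdot-\tau,\cdot-e)]\big)\zeta_\varepsilon\,de\,d\tau$, and together with the linearity of $\partial_t$ this gives $F[u^\varepsilon]\ge\int F[u(\cdot-\tau,\cdot-e)]\zeta_\varepsilon\,de\,d\tau=0$. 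Interpreted in the viscosity sense and then read off pointwise using $u^\varepsilon\in C_b^\infty$, this shows $\partial_tu^\varepsilon-\sup_{k_\pm}L_{k_\pm}[u^\varepsilon]\ge0$ everywhere.

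Next I would transfer this to the scheme through the consistency estimate Proposition \ref{prop} (iii): since $u^\varepsilon$ is a supersolution, $S(\Delta,x,u^\varepsilon(t,x),u^\varepsilon(t-\Delta,\cdot))\ge-E(\varepsilon,\Delta)$ on $(\Delta,T]\times\mathbb{R}$, where, after inserting (\ref{5.3}) and using that $R^{0}$ is bounded and, by Assumption (A3), $R_\Delta^{1},R_\Delta^{2}\le C\Delta^{q}$,
\[
E(\varepsilon,\Delta)\le C\big(\varepsilon^{-3}\Delta+\varepsilon^{-2}\Delta^{1/\alpha}+\varepsilon^{-1}\Delta^{\frac{2-\alpha}{\alpha}}+\varepsilon^{-1}\Delta^{q}+\Delta^{q}\big).
\]
Thus $u^\varepsilon$ is an approximate supersolution of the scheme while $u_\Delta$ solves $S=0$ exactly. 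Applying the comparison principle Lemma \ref{comparison theorem} with $\underline v=u_\Delta$ (so $h_1=0$) and $\bar v=u^\varepsilon$ (so $h_2=-E$), and controlling the initial strip by Lemma \ref{first interval estimate} ($|u-u_\Delta|\le C_\phi(M_X^{1}+M_\xi^{1})\Delta^{1/\alpha}$ on $[0,\Delta]$) together with $|u-u^\varepsilon|_0\le 2C_{\phi,\mathcal{K}}\varepsilon$, I obtain
\[
u_\Delta-u\le C\big(\varepsilon+\Delta^{1/\alpha}+\varepsilon^{-3}\Delta+\varepsilon^{-2}\Delta^{1/\alpha}+\varepsilon^{-1}\Delta^{\frac{2-\alpha}{\alpha}}+\varepsilon^{-1}\Delta^{q}+\Delta^{q}\big).
\]
Choosing $\varepsilon=\Delta^{\Gamma(\alpha,q)}$ with $\Gamma(\alpha,q)$ as in (\ref{tau}) and checking term by term (using $1/\alpha>1/2$ and $\Gamma\le\min\{\frac14,\frac{2-\alpha}{2\alpha}\}$ to dominate the $\varepsilon^{-2}\Delta^{1/\alpha}$ and $\varepsilon^{-1}\Delta^{\frac{2-\alpha}{\alpha}}$ contributions) that every exponent is at least $\Gamma(\alpha,q)$ then yields $u_\Delta-u\le L_0\Delta^{\Gamma(\alpha,q)}$, with $L_0$ explicit in the constants above.

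I expect the principal difficulty to be the rigorous justification of the supersolution step: because the nonlocal operator is singular at the origin and $u$ is only a viscosity solution, the Jensen/concavity argument must be carried out within the viscosity framework (as in Krylov and Barles--Jakobsen) before being upgraded to a pointwise inequality for the smooth $u^\varepsilon$. The remaining work — tracking the consistency error of Proposition \ref{prop} (iii) against the blow-up rates (\ref{5.3}) and performing the final optimization in $\varepsilon$ so as to reproduce exactly $\Gamma(\alpha,q)$ — is routine but must be carried out carefully.
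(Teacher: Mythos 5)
Your proposal is correct and follows essentially the same route as the paper's proof: mollify $u$ via Krylov's shaking-coefficients device, use the concavity of (\ref{1.1}) (equivalently, convexity of $\omega\mapsto\sup_{k_{\pm}}\int\delta_{z}\omega\,F_{k_{\pm}}(dz)$) to make $u^{\varepsilon}$ a smooth supersolution, feed it into the consistency estimate of Proposition \ref{prop} (iii) with the bounds (\ref{5.3}), apply the comparison result of Lemma \ref{comparison theorem} together with the initial-strip estimate of Lemma \ref{first interval estimate}, and optimize in $\varepsilon$. Your single unified choice $\varepsilon=\Delta^{\Gamma(\alpha,q)}$ coincides with the paper's case-by-case selection ($\Delta^{1/4}$, $\Delta^{(2-\alpha)/(2\alpha)}$, or $\Delta^{q/2}$), and the only delicate exponent check, $\frac{1}{\alpha}-2\Gamma\geq\Gamma$, does hold since $\min\{\frac{1}{4},\frac{2-\alpha}{2\alpha}\}\leq\frac{1}{3\alpha}$ for all $\alpha\in(1,2)$.
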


\begin{proof}
Step 1. Notice that $u(t-\tau,x-e)$ is a viscosity solution of (\ref{u_PIDE})
in $[0,T]\times \mathbb{R}$ for any $(\tau,e)\in(-\varepsilon^{2},0)\times
B(0,\varepsilon)$. Multiplying it by $\zeta_{\varepsilon}(\tau,e)$ and
integrating it with respect to $(\tau,e)$, from the concavity of
(\ref{u_PIDE}) with respect to the nonlocal term, we can derive that
$u^{\varepsilon}(t,x)$ is a supersolution of (\ref{u_PIDE}) in $(0,T]\times
\mathbb{R}$, that is, for $(t,x)\in(0,T]\times \mathbb{R}$,%
\begin{equation}
\partial_{t}u^{\varepsilon}(t,x)-\sup \limits_{k_{\pm}\in K_{\pm}}\left \{
\int_{\mathbb{R}}\delta_{z}u^{\varepsilon}(t,x)F_{k_{\pm}}(dz)\right \}  \geq0.
\label{5.2}%
\end{equation}

Step 2. Since $u^{\varepsilon}\in C_{b}^{\infty}([0,T]\times \mathbb{R)}$,
together with the consistency property in Proposition \ref{prop} (iii) and
(\ref{5.2}), using (\ref{5.3}), we can deduce that
\begin{equation}%
\begin{split}
&  S(\Delta,x,u^{\varepsilon}(t,x),u^{\varepsilon}(t-\Delta,\cdot))\\
&  \geq-2C_{\phi,\mathcal{K}}M_{\zeta}[(1+M_{\xi}^{1})(\varepsilon^{-3}%
\Delta+\varepsilon^{-2}\Delta^{\frac{1}{\alpha}})+\varepsilon^{-1}%
\Delta^{\frac{2-\alpha}{\alpha}}R^{0}+\varepsilon^{-1}R_{\Delta}^{1}%
+R_{\Delta}^{2}]\\
&  =:-2C_{\phi,\mathcal{K}}M_{\zeta}C(\varepsilon,\Delta).
\end{split}
\label{5.4}%
\end{equation}
Applying comparison principle in Lemma \ref{comparison theorem} to $u_{\Delta
}$ and $u^{\varepsilon}$,\ by (\ref{4.1}) and (\ref{5.4}), we have for
$(t,x)\in \lbrack0,T]\times \mathbb{R}$,
\begin{equation}
u_{\Delta}-u^{\varepsilon}\leq \sup_{(t,x)\in \lbrack0,\Delta]\times \mathbb{R}%
}(u_{\Delta}-u^{\varepsilon})^{+}+2TC_{\phi,\mathcal{K}}M_{\zeta}%
C(\varepsilon,\Delta). \label{5.5}%
\end{equation}

Step 3. In view of Lemma \ref{first interval estimate} and (\ref{5.5}), we
obtain that
\begin{align*}
u_{\Delta}-u  &  =(u_{\Delta}-u^{\varepsilon})+(u^{\varepsilon}-u)\\
&  \leq \sup_{(t,x)\in \lbrack0,\Delta]\times \mathbb{R}}(u_{\Delta}%
-u)^{+}+|u-u^{\varepsilon}|+2TC_{\phi,\mathcal{K}}M_{\zeta}C(\varepsilon
,\Delta)+2C_{\phi,\mathcal{K}}\varepsilon \\
&  \leq C_{\phi}(M_{X}^{1}+M_{\xi}^{1})\Delta^{\frac{1}{\alpha}}%
+2TC_{\phi,\mathcal{K}}M_{\zeta}C(\varepsilon,\Delta)+4C_{\phi,\mathcal{K}%
}\varepsilon.
\end{align*}
Assumptions (A1)-(A3) indicate that $R^{0}\leq4M$, $R_{\Delta}^{1}%
\leq10C\Delta^{q}$, and $R_{\Delta}^{2}\leq16C\Delta^{q}$. When $\alpha
\in(1,\frac{4}{3}]$ and $q\in \lbrack \frac{1}{2},\infty)$, by choosing
$\varepsilon=\Delta^{\frac{1}{4}}$, we have $u_{\Delta}-u\leq L_{0}%
\Delta^{\frac{1}{4}}$, where%
\begin{equation}
L_{0}:=C_{\phi}(M_{X}^{1}+M_{\xi}^{1})+4C_{\phi,\mathcal{K}}+2TC_{\phi
,\mathcal{K}}M_{\zeta}[2(1+M_{\xi}^{1})+4M+26C]\text{;} \label{L}%
\end{equation}
when $\alpha \in(1,\frac{4}{3}]$ and $q\in \lbrack0,\frac{1}{2})$, by choosing
$\varepsilon=\Delta^{\frac{q}{2}}$, we have $u_{\Delta}-u\leq L_{0}%
\Delta^{\frac{q}{2}}$; when $\alpha \in(\frac{4}{3},2)$ and $q\in \lbrack
\frac{2-\alpha}{\alpha},\infty)$, by letting $\varepsilon=\Delta
^{\frac{2-\alpha}{2\alpha}}$, we get $u_{\Delta}-u\leq L_{0}\Delta
^{\frac{2-\alpha}{2\alpha}}$; when $\alpha \in(\frac{4}{3},2)$ and
$q\in(0,\frac{2-\alpha}{\alpha})$, by letting $\varepsilon=\Delta^{\frac{q}%
{2}}$, we get $u_{\Delta}-u\leq L_{0}\Delta^{\frac{q}{2}}$. To sum up, we
conclude that%
\[
u_{\Delta}-u\leq L_{0}\Delta^{\Gamma(\alpha,q)},
\]
where $\Gamma(\alpha,q)=\min \{ \frac{1}{4},\frac{2-\alpha}{2\alpha},\frac
{q}{2}\}$. This leads to the desired result.
\end{proof}

\subsubsection{Upper bound for the error of approximation schemes}

To obtain an upper bound for the error of approximation scheme, we are not
able to construct approximate smooth subsolutions of (\ref{u_PIDE}) due to the
concavity of (\ref{u_PIDE}). Instead, we interchange the roles of PIDE
(\ref{u_PIDE}) and the approximation scheme (\ref{4.1}). For $\varepsilon
\in(0,1)$, we extend (\ref{4.1}) to the domain $[0,T+\varepsilon^{2}%
]\times \mathbb{R}$ and still denote as $u_{\Delta}$. For $(t,x)\in
\lbrack0,T]\times \mathbb{R}$, we define the mollification of $u$ by
\[
u_{\Delta}^{\varepsilon}(t,x)=u_{\Delta}\ast \zeta_{\varepsilon}(t,x)=\int
_{-\varepsilon^{2}<\tau<0}\int_{|e|<\varepsilon}u_{\Delta}(t-\tau
,x-e)\zeta_{\varepsilon}(\tau,e)d\tau de.
\]
In view of Theorem \ref{u_num_regularity}, the standard properties of
mollifiers indicate that%
\begin{equation}%
\begin{array}
[c]{l}%
|u_{\Delta}-u_{\Delta}^{\varepsilon}|_{0}\leq C_{\phi}(1+I_{\Delta
})(\varepsilon+\Delta^{\frac{1}{2}}),\\
|\partial_{t}^{l}D_{x}^{k}u_{\Delta}^{\varepsilon}|_{0}\leq C_{\phi}M_{\zeta
}(1+I_{\Delta})(\varepsilon+\Delta^{\frac{1}{2}})\varepsilon^{-2l-k}\text{
\ for }k+l\geq1.
\end{array}
\label{5.6}%
\end{equation}

We obtain the following upper bound.

\begin{lemma}
\label{upper bound}Suppose that (A1)-(A3) hold and $\phi \in C_{b,Lip}%
(\mathbb{R})$. Then, for $(t,x)\in \lbrack0,T]\times \mathbb{R}$,
\[
u(t,x)\leq u_{\Delta}(t,x)+U_{0}\Delta^{\Gamma(\alpha,q)},
\]
where $\Gamma(\alpha,q)=\min \{ \frac{1}{4},\frac{2-\alpha}{2\alpha},\frac
{q}{2}\}$ and $U_{0}$ is a constant depending on $C_{\phi},C_{\phi
,\mathcal{K}},M_{X}^{1},M_{\xi}^{1},M_{\zeta},M,I_{\Delta}$ given in (\ref{U}).
\end{lemma}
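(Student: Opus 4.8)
The plan is to run the argument of Lemma \ref{lower bound} with the roles of the PIDE \eqref{1.1} and the scheme \eqref{4.1} interchanged. Because \eqref{1.1} is concave in its nonlocal term, mollifying the solution $u$ produces a \emph{supersolution}, which only yields the lower bound; to reach the upper bound I instead mollify the scheme solution $u_\Delta$ and exploit the concavity of the scheme. \textbf{Step 1.} I first note that $S$ is invariant under translations in $(t,x)$ and that $u_\Delta$ solves \eqref{4.1} on the extended domain $[0,T+\varepsilon^2]\times\mathbb{R}$, so for each fixed $(\tau,e)\in(-\varepsilon^2,0)\times B(0,\varepsilon)$ the shifted map $(t,x)\mapsto u_\Delta(t-\tau,x-e)$ still satisfies $S(\Delta,x,\cdot,\cdot)=0$ on $(\Delta,T]\times\mathbb{R}$. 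Averaging against the probability density $\zeta_\varepsilon$ and using the concavity of $S$ in Proposition \ref{prop} (ii) (Jensen's inequality) gives
\[
S(\Delta,x,u_\Delta^\varepsilon(t,x),u_\Delta^\varepsilon(t-\Delta,\cdot))\ge\int S(\Delta,x,u_\Delta(t-\tau,x-e),u_\Delta(t-\Delta-\tau,\cdot-e))\zeta_\varepsilon(\tau,e)\,d\tau\,de=0,
\]
so $u_\Delta^\varepsilon$ is a supersolution of the scheme on $(\Delta,T]\times\mathbb{R}$.

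\textbf{Step 2.} Since $u_\Delta^\varepsilon\in C_b^\infty([0,T]\times\mathbb{R})$, I plug it into the consistency estimate of Proposition \ref{prop} (iii) and combine with Step 1 to get
\[
\partial_t u_\Delta^\varepsilon(t,x)-\sup_{k_\pm\in K_\pm}\Big\{\int_{\mathbb{R}}\delta_z u_\Delta^\varepsilon(t,x)F_{k_\pm}(dz)\Big\}\ge-\tilde C(\varepsilon,\Delta),
\]
where $\tilde C(\varepsilon,\Delta)$ is the consistency error estimated through the mollifier bounds \eqref{5.6}. The key observation is that \eqref{5.6} carries the factor $(1+I_\Delta)(\varepsilon+\Delta^{1/2})$ and the powers $\varepsilon^{-2l-k}$ rather than the $\varepsilon^{1-2l-k}$ of \eqref{5.3}; however, for every choice of $\varepsilon$ used below one has $\varepsilon\ge\Delta^{1/2}$, whence $\varepsilon+\Delta^{1/2}\le2\varepsilon$ and $\tilde C(\varepsilon,\Delta)$ has exactly the same $(\varepsilon,\Delta)$-scaling as the quantity $C(\varepsilon,\Delta)$ of Lemma \ref{lower bound}, up to the multiplicative constant $(1+I_\Delta)$. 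Thus $u_\Delta^\varepsilon$ is an approximate supersolution of \eqref{1.1}.

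\textbf{Step 3.} I then invoke the comparison principle for the nonlocal equation \eqref{1.1} that underlies the uniqueness in Lemma \ref{DPP}: as $u$ is a viscosity subsolution and $u_\Delta^\varepsilon$ a smooth approximate supersolution, the function $u_\Delta^\varepsilon+\tilde C(\varepsilon,\Delta)t$ is a genuine supersolution, and comparison yields
\[
u-u_\Delta^\varepsilon\le\sup_{[0,\Delta]\times\mathbb{R}}(u-u_\Delta^\varepsilon)^+ + T\,\tilde C(\varepsilon,\Delta).
\]
Splitting $u-u_\Delta^\varepsilon=(u-u_\Delta)+(u_\Delta-u_\Delta^\varepsilon)$, I bound the initial layer by the first-interval estimate of Lemma \ref{first interval estimate} together with $|u_\Delta-u_\Delta^\varepsilon|_0\le C_\phi(1+I_\Delta)(\varepsilon+\Delta^{1/2})$ from \eqref{5.6}. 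Using $R^0\le4M$, $R_\Delta^1\le10C\Delta^q$ and $R_\Delta^2\le16C\Delta^q$ from (A1)--(A3) and choosing $\varepsilon$ exactly as in Lemma \ref{lower bound} (namely $\varepsilon=\Delta^{1/4}$, $\Delta^{(2-\alpha)/(2\alpha)}$ or $\Delta^{q/2}$ according to the ranges of $\alpha$ and $q$), I obtain $u-u_\Delta\le U_0\Delta^{\Gamma(\alpha,q)}$ with $U_0$ depending on $C_\phi,C_{\phi,\mathcal{K}},M_X^1,M_\xi^1,M_\zeta,M$ and on a uniform bound for $I_\Delta$, which is finite by Theorem \ref{u_num_regularity}.

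The hard part will be the passage through Step 3: unlike the lower bound, which used the scheme comparison of Lemma \ref{comparison theorem}, here one must rely on the comparison principle for the PIDE itself --- this is precisely the ``interchange of roles'' and is what makes the concave case tractable without constructing smooth subsolutions. A secondary difficulty is to ensure that the coarser mollifier estimates \eqref{5.6}, which reflect only the $1/2$-H\"older time regularity of $u_\Delta$ and the extra additive $\Delta^{1/2}$, do not worsen the exponent; this is guaranteed by the fact that all admissible choices of $\varepsilon$ satisfy $\varepsilon\ge\Delta^{1/2}$. The domain bookkeeping in Step 1, ensuring that every time-shifted copy of $u_\Delta$ solves the scheme on the relevant region, also requires care.
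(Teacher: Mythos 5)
Your proposal is correct and follows essentially the same route as the paper: mollify $u_\Delta$ and use the concavity of $S$ (Jensen) to make $u_\Delta^\varepsilon$ a supersolution of the scheme, convert it via the consistency estimate into an approximate supersolution of the PIDE, and then apply the comparison principle for \eqref{1.1} (the paper cites Proposition 5.5 of Neufeld--Nutz), handling the initial layer with Lemma \ref{first interval estimate} and choosing $\varepsilon$ exactly as in Lemma \ref{lower bound}. Your packaging of the comparison step (adding $\tilde C(\varepsilon,\Delta)t$ and a $\sup$ over $[0,\Delta]$) versus the paper's explicit barriers $\bar v,\underline v$ compared at $t=\Delta$ is only a cosmetic difference, and your observation that $\varepsilon\ge\Delta^{1/2}$ in all cases, so that the coarser mollifier bounds \eqref{5.6} do not degrade the exponent, is precisely what the paper's factor $(1+\varepsilon^{-1}\Delta^{1/2})\le 2$ encodes.
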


\begin{proof}
Step 1. Note that for any $(t,x)\in[\Delta,T]\times \mathbb{R}$ and
$(\tau,e)\in(-\varepsilon^{2},0)\times B(0,\varepsilon)$,
\[
S(\Delta,x,u_{\Delta}(t-\tau,x-e),u_{\Delta}(t-\Delta,\cdot-e))=0.
\]
Multiplying the above equality by $\zeta_{\varepsilon}(\tau,e)$ and
integrating with respect to $(\tau,e)$, from the concavity of the
approximation scheme (\ref{4.1}), we have for $(t,x)\in(\Delta,T]\times
\mathbb{R}$,
\begin{equation}%
\begin{split}
0  &  =\int_{-\varepsilon^{2}<\tau<0}\int_{|e|<\varepsilon}S(\Delta
,x,u_{\Delta}(t-\tau,x-e),u_{\Delta}(t-\Delta-\tau,\cdot-e))\zeta
_{\varepsilon}(\tau,e)ded\tau \\
&  =\int_{-\varepsilon^{2}<\tau<0}\int_{|e|<\varepsilon}\big(u_{\Delta}%
(t-\tau,x-e)-\mathbb{\tilde{E}}[u_{\Delta}(t-\Delta-\tau,x-e+\Delta^{1/\alpha
}\xi)]\big)\Delta^{-1}\zeta_{\varepsilon}(\tau,e)ded\tau \\
&  \leq \big(u_{\Delta}^{\varepsilon}(t,x)-\mathbb{\tilde{E}}[u_{\Delta
}^{\varepsilon}(t-\Delta,x+\Delta^{1/\alpha}\xi)]\big)\Delta^{-1}%
=S(\Delta,x,u_{\Delta}^{\varepsilon}(t,x),u_{\Delta}^{\varepsilon}(t,\cdot)).
\end{split}
\label{5.7}%
\end{equation}

Step 2. Since $u_{\Delta}^{\varepsilon}\in C_{b}^{\infty}([0,T]\times
\mathbb{R)}$, substituting $u_{\Delta}^{\varepsilon}$ into the consistency
property in Proposition \ref{prop} (iii), together with (\ref{5.6}) and
(\ref{5.7}), we can compute that%
\[
\partial_{t}u_{\Delta}^{\varepsilon}(t,x)-\sup \limits_{k_{\pm}\in K_{\pm}%
}\left \{  \int_{\mathbb{R}}\delta_{z}u_{\Delta}^{\varepsilon}(t,x)F_{k_{\pm}%
}(dz)\right \}  \geq-C_{\phi}M_{\zeta}(1+I_{\Delta})(1+\varepsilon^{-1}%
\Delta^{\frac{1}{2}})C(\varepsilon,\Delta),
\]
where $C(\varepsilon,\Delta)$ is defined in (\ref{5.4}). Then, the function%
\[
\bar{v}(t,x):=u_{\Delta}^{\varepsilon}(t,x)+C_{\phi}M_{\zeta}(1+I_{\Delta
})(1+\varepsilon^{-1}\Delta^{\frac{1}{2}})C(\varepsilon,\Delta)(t-\Delta)
\]
is a supersolution of (\ref{u_PIDE}) in $(\Delta,T]\times \mathbb{R}$ with
initial condition $\bar{v}(\Delta,x)=u_{\Delta}^{\varepsilon}(\Delta,x)$. In
addition,
\[
\underline{v}(t,x)=u(t,x)-C_{\phi}(M_{X}^{1}+M_{\xi}^{1})\Delta^{\frac
{1}{\alpha}}-C_{\phi}(1+I_{\Delta})(\varepsilon+\Delta^{\frac{1}{2}})
\]
is a viscosity solution of (\ref{u_PIDE}) in $(\Delta,T]\times \mathbb{R}$.
From (\ref{5.6}) and Lemma \ref{first interval estimate}, we can further
obtain%
\begin{align*}
\underline{v}(\Delta,x)  &  =u(\Delta,x)-C_{\phi}(M_{X}^{1}+M_{\xi}^{1}%
)\Delta^{\frac{1}{\alpha}}-C_{\phi}(1+I_{\Delta})(\varepsilon+\Delta^{\frac
{1}{2}})\\
&  =(u(\Delta,x)-u_{\Delta}(\Delta,x))+(u_{\Delta}(\Delta,x)-u_{\Delta
}^{\varepsilon}(\Delta,x))+u_{\Delta}^{\varepsilon}(\Delta,x)\\
&  \; \; \; \;-C_{\phi}(M_{X}^{1}+M_{\xi}^{1})\Delta^{\frac{1}{\alpha}%
}-C_{\phi}(1+I_{\Delta})(\varepsilon+\Delta^{\frac{1}{2}})\\
&  \leq u_{\Delta}^{\varepsilon}(\Delta,x)=\bar{v}(\Delta,x).
\end{align*}
By means of the comparison principle for PIDE (\ref{u_PIDE}) (see Proposition
5.5\ in \cite{NN2017}), we conclude that $\underline{v}(t,x)\leq \bar{v}(t,x)$
in $[\Delta,T]\times \mathbb{R}$, which implies for $(t,x)\in \lbrack
\Delta,T]\times \mathbb{R}$,
\begin{equation}
u-u_{\Delta}^{\varepsilon}\leq C_{\phi}[(M_{X}^{1}+M_{\xi}^{1})\Delta
^{\frac{1}{\alpha}}+(1+I_{\Delta})(\varepsilon+\Delta^{\frac{1}{2}}%
)+TM_{\zeta}(1+I_{\Delta})(1+\varepsilon^{-1}\Delta^{\frac{1}{2}%
})C(\varepsilon,\Delta)]. \label{5.8}%
\end{equation}

Step 3. Using\ (\ref{5.6}) and (\ref{5.8}), we have
\begin{align*}
u-u_{\Delta}  &  =(u-u_{\Delta}^{\varepsilon})+(u_{\Delta}^{\varepsilon
}-u_{\Delta})\\
&  \leq C_{\phi}[(M_{X}^{1}+M_{\xi}^{1})\Delta^{\frac{1}{\alpha}%
}+2(1+I_{\Delta})(\varepsilon+\Delta^{\frac{1}{2}})+TM_{\zeta}(1+I_{\Delta
})(1+\varepsilon^{-1}\Delta^{\frac{1}{2}})C(\varepsilon,\Delta)].
\end{align*}
Under Assumptions (A1)-(A3), we have $I_{\Delta}<\infty$, $R^{0}\leq4M$,
$R_{\Delta}^{1}\leq10C\Delta^{q}$, and $R_{\Delta}^{2}\leq16C\Delta^{q}$. In
the same way as Lemma \ref{lower bound}, by minimizing with respect to
$\varepsilon$, we can derive that for $(t,x)\in \lbrack \Delta,T]\times
\mathbb{R}$,
\[
u-u_{\Delta}\leq U_{0}\Delta^{\Gamma(\alpha,q)},
\]
where
\begin{equation}
U_{0}=C_{\phi}[M_{X}^{1}+M_{\xi}^{1}+4(1+I_{\Delta})+2TM_{\zeta}(1+I_{\Delta
})(2(1+M_{\xi}^{1})+4M+26C)] \label{U}%
\end{equation}
and $\Gamma(\alpha,q)=\min \{ \frac{1}{4},\frac{2-\alpha}{2\alpha},\frac{q}%
{2}\}$. Combining this and Lemma \ref{first interval estimate}, we obtain the
desired result.
\end{proof}

\bigskip

\end{document}